\newtheorem{theorem}{Theorem}
\theoremstyle{plain}
\newtheorem{corollary}[theorem]{Corollary}
\newtheorem{example}[theorem]{Example}
\newtheorem{lemma}[theorem]{Lemma}
\newtheorem{problem}[theorem]{Problem}
\newtheorem{proposition}[theorem]{Proposition}
\newtheorem{remark}[theorem]{Remark}
\numberwithin{equation}{section}
\begin{document}
\title[On the separable quotient problem]{On the separable quotient problem for Banach spaces}
\author{J. C. Ferrando, J. K{\c{a}}kol, M. L\'{o}pez-Pellicer and W. \'Sliwa}
\address{Centro de Investigaci\'{o}n Operativa, Edificio Torretamarit, Avda
de la Universidad,%
\newline
\indent%
Universidad Miguel Hern\'{a}ndez, E-03202 Elche (Alicante). SPAIN}
\email{jc.ferrando@umh.es}
\address{Faculty of Mathematics and Informatics. A. Mickiewicz University,
61-614 Pozna{\'{n}},%
\newline
\indent%
Poland}
\email{kakol@amu.edu.pl}
\address{Depto. de Matem\'{a}tica Aplicada and IMPA. Universitat Polit\`{e}%
cnica de Val\`{e}ncia,%
\newline
\indent%
E-46022 Valencia, Spain }
\email{mlopezpe@mat.upv.es}
\address{Faculty of Mathematics and Natural Sciences University of Rzesz\'{o}%
w,%
\newline
\indent%
35-310 Rzesz\'{o}w, Poland }
\email{sliwa@amu.edu.pl}
\thanks{The first three named authors were supported by Grant
PROMETEO/2013/058 of the Conselleria d'Educaci\'{o}, Investigaci\'{o},
Cultura i Esport of Generalitat Valenciana. The second named author was also
supported by GA\v{C}R Project 16-34860L and RVO: 67985840.}
\dedicatory{To the memory of our Friend Professor Pawe{\l} Doma\'nski}
\subjclass{46B28, 46E27, 46E30}
\keywords{Banach space, barrelled space, separable quotient, vector-valued
function space, linear operator space, vector measure space, tensor product,
Radon-Nikod\'{y}m property}

\begin{abstract}
While the classic \emph{separable quotient problem } remains open, we survey
general results related to this problem and examine the existence of a
particular infinite-dimensional separable quotient in some Banach spaces of
vector-valued functions, linear operators and vector measures. Most of the
results presented are consequence of known facts, some of them relative to
the presence of complemented copies of the classic sequence spaces $c_{0}$
and $\ell _{p}$, for $1\leq p\leq \infty $. Also  recent results  of Argyros, Dodos, Kanellopoulos \cite{A}  and   \'Sliwa \cite{sliwa} are provided.  This makes our presentation
supplementary to a previous survey (1997) due to Mujica.
\end{abstract}

\maketitle

\section{Introduction}

One of unsolved problems of Functional Analysis (posed by S. Mazur in 1932)
asks:

\begin{problem}
\label{mazur} Does any infinite-dimensional Banach space have a separable
\emph{(}infinite dimensional\emph{)} quotient?
\end{problem}

An easy application of the open mapping theorem shows that an infinite
dimensional Banach space $X$ has a separable quotient if and only if $X$ is
mapped on a separable Banach space under a continuous linear map.

Seems that the first comments about Problem \ref{mazur} are mentioned in %
\cite{La} and \cite{Ro}. It is already well known that all reflexive, or
even all infinite-dimensional weakly compactly generated Banach spaces (WCG
for short), have separable quotients. In \cite[Theorem IV.1(i)]{JR} Johnson
and Rosenthal proved that every infinite dimensional separable Banach space
admits a quotient with a Schauder basis. The latter result provides another
(equivalent) reformulation of Problem \ref{mazur}.

\begin{problem}
\label{mazur2} Does any infinite dimensional Banach space admits an infinite
dimensional quotient with a Schauder basis?
\end{problem}

In \cite[Theorem 2]{johnson} is proved that if $Y$ is a separable closed
subspace of an infinite-dimensional Banach space $X$ and the quotient $X/Y$
has a separable quotient, then $Y$ is quasi-complemented in $X$. So one gets
the next equivalent condition to Problem \ref{mazur}.

\begin{problem}
\label{mazur3} Does any infinite dimensional Banach space $X$ contains a
separable closed subspace which has a proper quasi-complement in $X$?
\end{problem}

Although Problem \ref{mazur} is left open for Banach spaces, a corresponding
question whether any infinite dimensional metrizable and complete non-normed
topological vector space $X$ admits a separable quotient has been already
solved. Indeed, if $X$ is locally convex (i.\thinspace e., $X$ is a Fr\'{e}%
chet space), a result of Eidelheit \cite{eidel} ensures that $X$ has a
quotient isomorphic to $\mathbb{K}^{\mathbb{N}}$, where $\mathbb{K}\in \{%
\mathbb{R},\mathbb{C}\}$.

In \cite{drew} Drewnowski posed a more general question: whether any
infinite dimensional metrizable and complete topological vector space $X$
has a closed subspace $Y$ such that the dimension of the quotient $X/Y$ is\
the continuum (in short $\dim (X/Y)=\mathfrak{c})$. The same paper contains
an observation stating that any infinite dimensional Fr\'{e}chet locally
convex space admits a quotient of dimension $\mathfrak{c}:=2^{\aleph _{0}}$.
Finally Drewnowski's problem has been solved by M. Popov, see \cite{popov}.
He showed, among others, that for $0<p<1$ the space $X:=L_{p}([0,1]^{2^{%
\mathfrak{c}}})$ does not admit a proper closed subspace $Y$ such that $\dim
(X/Y)\leq \mathfrak{c}$. Consequently $X$ does not have a separable quotient.

The organization of the present paper goes as follows. In the second section
we gather general selected results about the separable quotient problem and
some classic results on Banach spaces containing copies of sequence spaces,
providing as well some straightforward consequences. Recent   results of Argyros, Dodos, Kanellopoulos \cite{A}  and   \'Sliwa \cite{sliwa} are also provided.

In the third section,
among others, we exhibit how the weak*-compactness of the dual unit ball is
related to the existence of quotients isomorphic to $c_{0}$ or $\ell _{2}$.
Next section contains three classic results about complete tensor products
of Banach spaces and their applications to the separable quotient problem.

Last sections are devoted to examine the existence of separable quotients
for many concrete classes of `big' Banach spaces, as Banach spaces of
vector-valued functions, bounded linear operators and vector measures. For
these classes of spaces we fix particular separable quotients by means of
the literature on the subject and the previous results. This line of
research has been also continued in a more general setting for the class of
topological vector spaces, particularly for spaces $C(X)$ of real-valued
continuous functions endowed with the pointwise and compact-open topology,
see \cite{ks}, \cite{sa1} and \cite{sa2}.

This paper intends to supplement Mujica's survey article \cite{Mu} by
collecting together some results not mentioned there, adding new facts
published afterwards and gathering some results being consequence of known
facts, some of them relative to the presence of complemented copies of the
classic spaces $c_{0}$ and $\ell _{p}$, for $1\leq p\leq \infty $, hoping
this will be useful for\ researchers interested in this area.

\section{A few results for general Banach spaces}

Let us start with the following remarkable concrete result of Agryros, Dodos and Kanellopoulos \cite{A} related to
Problem \ref{mazur}.  They proved (using the Ramsey Theory)  that for every separable Banach space $X$ with non-separable dual, the space $X^{**}$  contains an unconditional family of size $|X^{**}|$. As application  they proved

\begin{theorem}[Argyros-Dodos-Kanellopoulos]
\label{Dodos} Every infinite-dimensional dual Banach space has a separable
quotient.
\end{theorem}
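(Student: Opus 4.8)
The plan is to write the dual space as $E=Z^{\ast}$ and to reduce everything to the case of a separable predual, where the Argyros--Dodos--Kanellopoulos unconditional family result applies. If $E$ is itself separable there is nothing to do, since $E$ is an infinite-dimensional separable quotient of itself. Assume then that $E$ is non-separable. Since $E$ is infinite-dimensional so is $Z$, and I first choose an infinite-dimensional \emph{separable} subspace $W\subseteq Z$ (for instance the closed span of a linearly independent sequence). The restriction map $R\colon Z^{\ast}\to W^{\ast}$, $R(f)=f|_{W}$, is norm-decreasing and, by the Hahn--Banach theorem, surjective; hence by the open mapping theorem it is a quotient map and $W^{\ast}\cong Z^{\ast}/W^{\perp}$ is a quotient of $E$. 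Consequently, if $W^{\ast}$ admits an infinite-dimensional separable quotient $q\colon W^{\ast}\to Q$, then $q\circ R\colon E\to Q$ exhibits $Q$ as a separable quotient of $E$. Thus it suffices to prove the statement when the predual is separable.

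So assume now that $W$ is separable and infinite-dimensional and show that $W^{\ast}$ has an infinite-dimensional separable quotient. If $W^{\ast}$ is separable we are again done. Otherwise $W$ is a separable Banach space with non-separable dual, and I invoke the result of Argyros, Dodos and Kanellopoulos \cite{A} quoted above: $W^{\ast\ast}=(W^{\ast})^{\ast}$ contains an unconditional family of cardinality $|W^{\ast\ast}|$. In particular this family is uncountable, so from it I extract a countably infinite normalized unconditional basic sequence $(\Phi_{n})_{n}$ in $W^{\ast\ast}$, with some unconditional constant $C$.

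The remaining and decisive step is to manufacture a separable quotient of $W^{\ast}$ out of $(\Phi_{n})_{n}$. Viewing each $\Phi_{n}$ as a functional on the Banach space $Y:=W^{\ast}$, the natural candidate is the operator $T\colon Y\to\mathbb{K}^{\mathbb{N}}$ given by $T(g)=(\Phi_{n}(g))_{n}$; the goal is to arrange that $T$ be a quotient map onto an infinite-dimensional separable space carrying an unconditional basis. To this end I would first pass, using weak$^{\ast}$-compactness of bounded sets in $Y^{\ast}=W^{\ast\ast}$ together with a cluster-point and gliding-hump selection, to a subsequence of $(\Phi_{n})_{n}$ that is weak$^{\ast}$-null while remaining unconditional basic; the fact that the original family has full size $|W^{\ast\ast}|$ provides ample room for such a selection. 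For a weak$^{\ast}$-null unconditional basic sequence the map $T$ lands in $c_{0}$, and a Bessaga--Pe{\l}czy{\'n}ski type argument shows that its range can be taken closed, so that $T$ corestricts to a quotient of $W^{\ast}$ onto a separable space with an unconditional basis. Equivalently, one may analyse the separable subspace $U=\overline{\operatorname{span}}\{\Phi_{n}\}\subseteq W^{\ast\ast}$ through the dichotomy for spaces with unconditional basis (reflexive, or containing $c_{0}$, or containing $\ell_{1}$) and read off the desired quotient in each case. I expect this passage from an uncountable unconditional family in the bidual to a genuine separable quotient of $W^{\ast}$ --- in particular the extraction of a weak$^{\ast}$-null unconditional subsequence and the closedness of the range of $T$ --- to be the main obstacle, the rest being the formal reductions above.
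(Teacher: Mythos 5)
The paper offers no proof of this theorem: it is quoted from \cite{A}, with the remark that it follows as an application of the Argyros--Dodos--Kanellopoulos unconditional-family theorem. Your two reductions are correct and are exactly the intended ones: the separable case is trivial, the restriction operator $R\colon Z^{\ast}\to W^{\ast}$ is a quotient map onto the dual of a separable infinite-dimensional subspace $W\subseteq Z$, and when $W^{\ast}$ is non-separable the ADK theorem supplies an infinite normalized unconditional basic sequence $(\Phi_{n})$ in $W^{\ast\ast}$. The genuine gap is in your ``decisive step''. First, you cannot in general pass to a weak$^{\ast}$-null subsequence: weak$^{\ast}$-compactness of bounded sets in $W^{\ast\ast}$ yields cluster points, not convergent subsequences, and weak$^{\ast}$-sequential compactness of $B_{W^{\ast\ast}}$ is precisely what may fail when the predual $W^{\ast}$ is non-separable --- which is the only case at issue (for $W=\ell_{1}$, so $W^{\ast}=\ell_{\infty}$, the coordinate functionals $\delta_{n}\in(\ell_{\infty})^{\ast}$ have no weak$^{\ast}$-convergent subsequence). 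Second, even granted a normalized weak$^{\ast}$-null unconditional basic sequence, the operator $T(g)=(\Phi_{n}(g))_{n}$ need not have closed range and need not be a quotient map onto anything: take $Y=\ell_{2}$ and $\Phi_{n}$ the unit vector basis of $Y^{\ast}=\ell_{2}$; then $T$ is the formal identity $\ell_{2}\to c_{0}$, whose range is dense and proper (hence non-closed), and the reflexive space $\ell_{2}$ has no quotient isomorphic to $c_{0}$, nor to any infinite-dimensional subspace of $c_{0}$, at all. Dually, $T$ is a quotient map onto $c_{0}$ exactly when $T^{\ast}$ embeds $\ell_{1}$ isomorphically onto a weak$^{\ast}$-closed subspace; unconditionality plus weak$^{\ast}$-nullity gives nothing of the sort, so no Bessaga--Pe{\l}czy\'{n}ski argument can rescue the closedness of the range.

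What does work is the route you relegate to an ``equivalently'': it should \emph{be} the proof. Let $U$ be the closed linear span of $(\Phi_{n})$ in $W^{\ast\ast}$. By James' trichotomy for spaces with an unconditional basis, either $U$ is reflexive, or $U\supset c_{0}$, or $U\supset\ell_{1}$. If $U$ is reflexive, Lemma \ref{Ro} gives $W^{\ast}$ a quotient isomorphic to the separable space $U^{\ast}$. If $U\supset c_{0}$, then the dual space $W^{\ast\ast}$ contains $c_{0}$, so $W^{\ast}$ contains a complemented copy of $\ell_{1}$ \cite[Chapter V, Theorem 10]{Di}, hence has $\ell_{1}$ as a quotient. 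If $U\supset\ell_{1}$, then $(W^{\ast})^{\ast}\supset\ell_{1}$ and the Hagler--Johnson theorem (first part of Theorem \ref{Juliet} and the remark following it; cf. Corollary \ref{sep}) gives $W^{\ast}$ a quotient isomorphic to $c_{0}$ or $\ell_{2}$. Each branch invokes a substantial classical theorem, so ``read off the desired quotient'' is not a formality; but these are exactly the tools this paper assembles, and with this replacement your reduction scheme becomes a complete derivation of the theorem from the unconditional-family result, essentially as \cite{A} deduce it.
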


\begin{corollary}
The space $\mathcal{L}\left( X,Y\right) $ of bounded linear operators
between Banach spaces $X$ and $Y$ equipped with the operator norm has a
separable quotient provided $Y\neq \left\{ \mathbf{0}\right\} $.
\end{corollary}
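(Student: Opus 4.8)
The plan is to realise $X^{*}$ as a complemented subspace, hence a quotient, of $\mathcal{L}(X,Y)$, and then to apply Theorem \ref{Dodos} to the dual space $X^{*}$. Since $Y\neq \{\mathbf{0}\}$, I would first fix a unit vector $y_{0}\in Y$ and, by the Hahn--Banach theorem, a functional $\phi \in Y^{*}$ with $\|\phi\|=1$ and $\phi(y_{0})=1$. Out of these data I build two maps: the embedding $J\colon X^{*}\to \mathcal{L}(X,Y)$ defined by $J(f)(x)=f(x)\,y_{0}$, and the candidate projection $P\colon \mathcal{L}(X,Y)\to \mathcal{L}(X,Y)$ defined by $P(T)(x)=\phi(Tx)\,y_{0}$.

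The verifications are then routine. One checks that $\|J(f)\|=\|f\|\,\|y_{0}\|=\|f\|$, so $J$ is an isometric embedding, and that $\|P(T)\|\le \|\phi\|\,\|T\|=\|T\|$, so $\|P\|\le 1$. Writing $P(T)=J(\phi\circ T)$ shows that the range of $P$ is contained in $J(X^{*})$, while the identity $P(J(f))(x)=\phi(f(x)y_{0})\,y_{0}=f(x)\phi(y_{0})\,y_{0}=J(f)(x)$ gives $P\circ J=J$; together these yield $P^{2}=P$, so that $P$ is a norm-one projection of $\mathcal{L}(X,Y)$ onto the subspace $J(X^{*})$, which is isometric to $X^{*}$. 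Consequently $\mathcal{L}(X,Y)=\ker P\oplus J(X^{*})$ and $\mathcal{L}(X,Y)/\ker P\cong X^{*}$, so $X^{*}$ is a quotient of $\mathcal{L}(X,Y)$.

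Assuming $X$ is infinite-dimensional, $X^{*}$ is an infinite-dimensional \emph{dual} Banach space, so Theorem \ref{Dodos} provides a separable quotient of $X^{*}$. Since a quotient of a quotient is again a quotient, this separable space is a quotient of $\mathcal{L}(X,Y)$, which is what we want.

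The one point that requires care — and which I expect to be the genuine obstacle — is the case $\dim X<\infty$, where $X^{*}$ is finite-dimensional and Theorem \ref{Dodos} gives nothing. Here $\mathcal{L}(X,Y)$ is isomorphic to the finite power $Y^{\dim X}$: if $Y$ is finite-dimensional the whole space is finite-dimensional and trivially separable, but if $Y$ is infinite-dimensional one is reduced to finding a separable quotient of $Y$ itself, which is exactly the open Problem \ref{mazur}. Thus the substantive content of the corollary lies in the case $\dim X=\infty$, and it is precisely the complemented copy of the dual $X^{*}$, combined with the Argyros--Dodos--Kanellopoulos theorem, that settles it.
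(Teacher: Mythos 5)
Your proposal is correct and is essentially the paper's own argument: the paper proves the corollary by the very same construction, embedding $X^{\ast}$ isometrically into $\mathcal{L}(X,Y)$ via $x^{\ast}\mapsto x^{\ast}(\cdot)\,y_{0}$, projecting onto its image by $T\mapsto \varphi(y_{0}^{\ast}\circ T)$, and then applying Theorem \ref{Dodos} to the complemented dual copy (see the proof of the first statement of Theorem \ref{Viola}). Your closing caveat about finite-dimensional $X$ is also consistent with the paper, which operates under the standing convention that $X$ is infinite-dimensional, so the complemented-dual-plus-Argyros--Dodos--Kanellopoulos argument is precisely what settles the substantive case.
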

 Indeed, it
follows from the fact that $X^{\ast }$ is complemented in $\mathcal{L}\left(
X,Y\right) $, see Theorem \ref{Viola} below for details.

There are several equivalent conditions to Problem \ref{mazur}. Let's select
a few of them. For the equivalence (2) and (3) below (and applications) in
the class of locally convex spaces we refer the reader to \cite{sa1}, \cite%
{sa0} and \cite{salu}. The equivalence between (1) and (3) is due to
Saxon-Wilansky \cite{SW}. Recall that a locally convex space $E$ is \emph{%
barrelled} if every barrel (an absolutely convex closed and absorbing set in
$E$) is a neighborhood of zero. We refer also to \cite{benet} and \cite{JR}
for some partial results related to the next theorem.

\begin{theorem}[Saxon-Wilansky]
\label{Saxon}\emph{\ }The following assertions are equivalent for an
infinite-dimensional Banach space $X$.

\begin{enumerate}
\item $X$ contains a dense non-barrelled linear subspace.

\item $X$ admits a strictly increasing sequence of closed subspaces of $X$
whose union is dense in $X$.

\item $X^{\ast }$ admits a strictly decreasing sequence of weak*-closed
subspaces whose intersection consists only of the zero element.

\item $X$ has a separable quotient.
\end{enumerate}
\end{theorem}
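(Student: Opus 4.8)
The plan is to run the cycle $(4)\Rightarrow(2)\Rightarrow(1)\Rightarrow(3)\Rightarrow(4)$, with the duality $(2)\Leftrightarrow(3)$ as a bridge. The backbone is the annihilator map $Y\mapsto Y^{\perp}$, an order-reversing bijection between the norm-closed subspaces of $X$ and the weak*-closed subspaces of $X^{\ast}$, with inverse $Z\mapsto Z_{\perp}$ (bipolar theorem: $Y^{\perp\perp}=Y$ for closed $Y$, and $(Z_{\perp})^{\perp}=Z$ for weak*-closed $Z$). Under it a strictly increasing tower $X_{1}\subsetneq X_{2}\subsetneq\cdots$ of closed subspaces corresponds to a strictly decreasing tower $X_{1}^{\perp}\supsetneq X_{2}^{\perp}\supsetneq\cdots$ of weak*-closed subspaces, and density of $\bigcup_{n}X_{n}$ becomes $\bigcap_{n}X_{n}^{\perp}=(\overline{\bigcup_{n}X_{n}})^{\perp}=\{0\}$; this yields $(2)\Leftrightarrow(3)$. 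For $(4)\Rightarrow(2)$, let $q\colon X\to X/Y$ map onto an infinite-dimensional separable space, pick a strictly increasing sequence of finite-dimensional subspaces $W_{n}$ of $X/Y$ with dense union (spans of an enumerated dense set), and set $X_{n}:=q^{-1}(W_{n})$. These are closed and strictly increasing, and since $q$ is an open surjection one has $\overline{q^{-1}(A)}=q^{-1}(\overline{A})$ for all $A$, so $\overline{\bigcup_{n}X_{n}}=q^{-1}(X/Y)=X$.

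For $(2)\Rightarrow(1)$ I would show that the dense subspace $E:=\bigcup_{n}X_{n}$ is non-barrelled. Each $X_{n}$ is a proper closed subspace, so Hahn--Banach gives $f_{n}\in X^{\ast}$ vanishing on $X_{n}$ with $\|f_{n}\|=n$; put $B:=\{x\in E:|f_{n}(x)|\le 1\text{ for all }n\}$. Then $B$ is absolutely convex and closed in $E$, and it is absorbing because each $x\in E$ lies in some $X_{m}$, whence $f_{n}(x)=0$ for $n\ge m$ and only finitely many inequalities are active. But $B$ is not a neighbourhood of $0$: since $E$ is dense we have $\|f_{n}|_{E}\|=\|f_{n}\|$, so $B\supseteq\varepsilon U_{E}$ would force $\sup_{n}\|f_{n}\|<\infty$, a contradiction. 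Thus $B$ is a barrel that is not a neighbourhood, and $E$ is dense and non-barrelled.

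The two substantial steps are $(1)\Rightarrow(3)$ and $(3)\Rightarrow(4)$. For $(1)\Rightarrow(3)$, a dense subspace $E$ is non-barrelled exactly when some barrel fails to be a neighbourhood; passing to polars in the dual pair $\langle E,E^{\ast}\rangle=\langle E,X^{\ast}\rangle$ (note $E^{\ast}=X^{\ast}$ isometrically, as $E$ is dense) converts such a barrel into a set $A\subseteq X^{\ast}$ that is $\sigma(X^{\ast},E)$-bounded, from absorbency, but norm-unbounded, from the failure to be a neighbourhood. The work is to organise $A$ into a strictly decreasing tower of weak*-closed subspaces with trivial intersection; this is the Saxon--Wilansky heart, and it genuinely uses $E\ne X$, since by the uniform boundedness principle a $\sigma(X^{\ast},X)$-bounded set would already be norm-bounded.

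The main obstacle is $(3)\Rightarrow(4)$. I would first reformulate $(4)$: $X$ has an infinite-dimensional separable quotient if and only if $X^{\ast}$ contains an infinite-dimensional, norm-separable, weak*-closed subspace $F$, because then $N:=F_{\perp}$ is closed with $(X/N)^{\ast}=F$, so $X/N$ is separable (a Banach space with separable dual is separable) and infinite-dimensional. Starting from $Z_{n}:=X_{n}^{\perp}$ I would choose normalised $f_{n}\in Z_{n}\setminus Z_{n+1}$; the density $\overline{\bigcup_{n}X_{n}}=X$ forces $(f_{n})$ to be weak*-null, while $f_{n}|_{X_{n}}=0$ makes the sequence linearly independent, so its weak*-closed span is infinite-dimensional. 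The delicate point, and the crux of the entire theorem, is to refine the selection (thinning to a basic sequence, controlling biorthogonal vectors drawn from the successive quotients $X_{n+1}/X_{n}$) so that the norm-closed span of $(f_{n})$ is simultaneously norm-separable and weak*-closed. This is precisely where the nestedness and the weak*-closedness of the $Z_{n}$ must be exploited in full: a bare normalised weak*-null sequence, always available by the Josefson--Nissenzweig theorem, does \emph{not} suffice, which is consistent with the separable quotient problem being open.
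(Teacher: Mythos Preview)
Your arguments for $(4)\Rightarrow(2)$, $(2)\Rightarrow(1)$, and the annihilator duality $(2)\Leftrightarrow(3)$ are correct. The gaps are precisely where you flag them: both $(1)\Rightarrow(3)$ and $(3)\Rightarrow(4)$ are left as programmes rather than proofs. For $(3)\Rightarrow(4)$ in particular, your strategy of producing a norm-separable \emph{weak*-closed} subspace of $X^{\ast}$ is harder than necessary and you do not carry it out; arranging the norm-closed span of a weak*-null sequence to be weak*-closed is essentially the problem of extracting a weak*-basic sequence, which is itself known to be equivalent to the theorem (see the paper's Theorem on strongly normal sequences).

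The paper proves only $(2)\Leftrightarrow(4)$, referring to Saxon--Wilansky for the remaining equivalences. Its $(2)\Rightarrow(4)$ argument bypasses your difficulty entirely. From a strictly increasing tower $(X_n)$ with dense union (thinned so that $\dim(X_{n+1}/X_n)\ge n$), it builds inductively a biorthogonal system $(x_n,x_n^{\ast})$ with $x_n\in X_{n+1}\setminus X_n$, $x_n^{\ast}|_{X_n}=0$, $x_n^{\ast}x_n=1$, and $x_{n+1}\in\bigcap_{j\le n}\ker x_j^{\ast}$. One then checks directly that $X_{n+1}\subseteq\mathrm{span}\{x_1,\dots,x_n\}+\bigcap_{k}\ker x_k^{\ast}$ for every $n$, so with $Y:=\bigcap_k\ker x_k^{\ast}$ the subspace $\mathrm{span}\{x_n:n\in\mathbb{N}\}+Y$ is dense and $X/Y$ is separable. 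No weak*-closedness of any linear span in $X^{\ast}$ is required; the quotient is exhibited in $X$, not recovered from a pre-annihilator. Since you already have $(3)\Rightarrow(2)$, this gives $(3)\Rightarrow(4)$ immediately and closes your cycle except for the implication out of $(1)$, which neither you nor the paper supplies here.
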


\begin{proof}
We prove only the equivalence between $(2)$ and $(4)$ which holds for any
locally convex space $X$.

$(4)\Rightarrow (2)$ Note that every separable Banach space has the above
property (as stated in $(2)$) and this property is preserved under preimages
of surjective linear operators (which clearly are open maps).

$(2)\Rightarrow (4)$ Let $\left\{ X_{n}:n\in \mathbb{N}\right\} $ be such a
sequence. We may assume that $\dim (X_{n+1}/X_{n})\geq n$ for all $n\in
\mathbb{N}$. Let $x_{1}\in X_{2}\setminus X_{1}$. There exists $x_{1}^{\ast
} $ in $X^{\ast }$ such that $x_{1}^{\ast }x_{1}=1$ and $x_{1}^{\ast }$
vanishes on $X_{1}$. Assume that we have constructed $(x_{1},x_{1}^{\ast
}),\dots ,(x_{n},x_{n}^{\ast })$ in $X\times X^{\ast }$ with $x_{j}\in
X_{j+1}$ such that $x_{j}^{\ast }x_{j}=1$ and $x_{j}^{\ast }$ vanishes on $%
X_{j}$ for $1\leq j\leq n$. Choose $x_{n+1}\in X_{n+2}\cap (x_{1}^{\ast
})^{-1}(\mathbf{0})\cap \dots \cap (x_{n}^{\ast })^{-1}(\mathbf{0})\setminus
X_{n+1}.$ Then there exists $x_{n+1}^{\ast }\in X^{\ast }$ with $%
x_{n+1}^{\ast }x_{n+1}=1$ vanishing on $X_{n+1}$. Since $X_{n+1}\subseteq
\mathrm{span}\left\{ x_{1},x_{2},\dots x_{n}\right\} +\bigcap_{k=1}^{\infty
}\ker x_{k}^{\ast },n\in \mathbb{N},$ we conclude that $\mathrm{span}%
\{x_{n}:n\in \mathbb{N}\}+\bigcap_{n}\ker x_{n}^{\ast }$ is dense in $X$.
Then $X/Y$ is separable for $Y:=\bigcap_{n}\ker x_{n}^{\ast }$.
\end{proof}

Particularly every Banach space whose weak*-dual is separable has a
separable quotient. Theorem \ref{Saxon} applies to show that every infinite
dimensional WCG Banach space has a separable quotient. Indeed, if $X$ is
reflexive we apply Theorem \ref{Dodos}. If $X$ is not reflexive, choose a
weakly compact absolutely convex set $K$ in $X$ such that $\overline{\mathrm{%
span}\left( K\right) }=X$. Since $K$ is a barrel of $Y:=\mathrm{span}\left(
K\right) $ and $X$ is not reflexive, it turns out that $Y$ is a dense
non-barrelled linear subspace of $X$.

The class of WCG Banach spaces, introduced in \cite{amir}, provides a quite
successful generalization of reflexive and separable spaces. As proved in %
\cite{amir} there are many bounded projection operators with separable
ranges on such spaces, so many separable complemented subspaces. For
example, if $X$ is WCG and $Y$ is a separable subspace, there exists a
separable closed subspace $Z$ with $Y\subseteq Z\subseteq X$ together with a
contractive projection. This shows that every infinite-dimensional WCG
Banach space admits many separable complemented subspaces, so separable
quotients.

The Josefson-Nissenzweig theorem states that the dual of any
infinite-dimensional Banach space contains a \emph{normal sequence},
i.\thinspace e., a normalized weak*-null sequence \cite{nis}.

Recall (cf. \cite{sliwa}) that a sequence $\left\{ y_{n}^{\ast }\right\} $
in the sphere $S(X^{\ast })$ of $X^{\ast }$ is \emph{strongly normal} if the
subspace $\{x\in X:\sum_{n=1}^{\infty }|y_{n}^{\ast }x|<\infty \}$ is dense
in $X$. Clearly every strongly normal sequence is normal. Having in mind
Theorem \ref{sliwa}, the following question is of interest.

\begin{problem} [\'Sliwa]
\label{sliwa1} Does every normal sequence in $X^{*}$ contains a strongly
normal subsequence?
\end{problem}

By \cite[Theorem 1]{sliwa}, any strongly normal sequence in $X^{\ast }$
contains a subsequence $\left\{ y_{n}\right\} $ which is a Schauder basic
sequence in the weak*-topology, i.\thinspace e., $\{y_{n}\}$ is a Schauder
basis in its closed linear span in the weak$^{\ast }$-topology. Conversely,
any normalized Schauder basic sequence in $(X^{\ast },w^{\ast })$ is
strongly normal, \cite[Proposition 1]{sliwa}.

The following theorem from \cite{sliwa} exhibits a connection between these
concepts.

\begin{theorem}[\'Sliwa]
\label{sliwa} Let $X$ be an infinite-dimensional Banach space. The following
conditions are equivalent:

\begin{enumerate}
\item $X$ has a separable quotient.

\item $X^{*}$ has a strongly normal sequence.

\item $X^{\ast }$ has a basic sequence in the weak* topology.

\item $X^{\ast }$ has a pseudobounded sequence, i.\thinspace e., a sequence $%
\left\{ x_{n}^{\ast }\right\} $ in $X^{\ast }$ that is pointwise bounded on
a dense subspace of $X$ and $\sup_{n}\Vert x_{n}^{\ast }\Vert =\infty $.
\end{enumerate}
\end{theorem}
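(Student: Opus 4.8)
The plan is to establish the cycle $(1)\Rightarrow(4)\Rightarrow(2)\Rightarrow(3)\Rightarrow(1)$, taking the equivalence $(2)\Leftrightarrow(3)$ essentially for free from the two results quoted just before the statement: \cite[Theorem 1]{sliwa} extracts from any strongly normal sequence a weak*-basic subsequence, which gives $(2)\Rightarrow(3)$, while \cite[Proposition 1]{sliwa} asserts that every normalized weak*-basic sequence is strongly normal, which gives $(3)\Rightarrow(2)$ after normalizing (normalizing a basic sequence leaves it a Schauder basic sequence). Thus the real work is to tie conditions $(1)$ and $(4)$ into this pair, and for that I would lean on the Saxon-Wilansky equivalences of Theorem~\ref{Saxon}.

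For $(1)\Rightarrow(4)$ I would invoke Theorem~\ref{Saxon}, by which a separable quotient is equivalent to the presence of a dense non-barrelled subspace $D\subseteq X$. Non-barrelledness of $D$ yields a $\sigma(D',D)$-bounded but norm-unbounded family in $D'$; from it one extracts a sequence $\{f_n\}$ with $\|f_n\|\to\infty$ that is still pointwise bounded on $D$. Extending each $f_n$ (by density of $D$, with the same norm) to $x_n^*\in X^*$ produces a sequence that is pointwise bounded on the dense subspace $D$ yet has $\sup_n\|x_n^*\|=\infty$, i.e.\ a pseudobounded sequence.

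For $(4)\Rightarrow(2)$ the idea is a normalize-and-thin trick. Given a pseudobounded $\{x_n^*\}$, pointwise bounded on a dense subspace $D$ with $\sup_n\|x_n^*\|=\infty$, choose a subsequence with $\|x_{n_k}^*\|\ge 2^k$ and set $y_k^*=x_{n_k}^*/\|x_{n_k}^*\|\in S(X^*)$. For each $x\in D$ the pointwise bound $M_x:=\sup_k|x_{n_k}^*(x)|<\infty$ gives $\sum_k|y_k^*(x)|\le M_x\sum_k 2^{-k}<\infty$, so $D\subseteq\{x\in X:\sum_k|y_k^*(x)|<\infty\}$ is dense; hence $\{y_k^*\}$ is strongly normal.

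Finally, for $(3)\Rightarrow(1)$, let $\{y_n\}$ be weak*-basic with (weak*-continuous) coordinate functionals, and put $G_n:=\overline{\mathrm{span}}^{\,w^*}\{y_k:k>n\}$. Each $G_n$ is weak*-closed in $X^*$; the chain is strictly decreasing because the $(n+1)$-st coordinate functional separates $y_{n+1}\in G_n$ from $G_{n+1}$; and $\bigcap_n G_n=\{0\}$ because an element lying in every tail has all basis coordinates equal to zero. This is exactly condition~(3) of Theorem~\ref{Saxon}, so $X$ has a separable quotient, closing the cycle. The step demanding the most care — and the main obstacle in this argument — is precisely this last one: confirming that the tail spans $G_n$ are genuinely weak*-closed and that their intersection is trivial. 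Both facts rest on the weak*-continuity of the coordinate functionals, which is exactly what ``basic sequence in the weak* topology'' is designed to guarantee; everything else is the bookkeeping packaged into Theorem~\ref{Saxon} and \cite{sliwa}.
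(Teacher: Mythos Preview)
Your argument is correct and runs on the same rails as the paper's: Theorem~\ref{Saxon} to pass between separable quotients and dense non-barrelled subspaces, the two results from \cite{sliwa} for $(2)\Leftrightarrow(3)$, and the tail-span construction $G_n=\overline{\mathrm{span}}^{\,w^*}\{y_k:k>n\}$ to feed condition~(3) of Theorem~\ref{Saxon}. The organizational difference is that the paper proves $(1)\Leftrightarrow(2)$ and $(1)\Leftrightarrow(4)$ separately (so $(4)\Rightarrow(1)$ via Banach--Steinhaus, then $(1)\Rightarrow(2)$ by an explicit Hahn--Banach construction against a barrel), whereas you close a single cycle and insert the direct step $(4)\Rightarrow(2)$ by thinning to $\|x_{n_k}^*\|\ge 2^k$ and normalizing. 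That shortcut is not in the paper and is a genuine economy: it replaces two implications by one and avoids repeating essentially the same separation argument that the paper runs for both $(1)\Rightarrow(2)$ and $(1)\Rightarrow(4)$. Conversely, the paper's concrete construction (choosing $x_n\notin D$ with $\|x_n\|\le n^{-2}$ and separating) makes the norm growth $\|x_n^*\|\ge n^2$ explicit, while your $(1)\Rightarrow(4)$ invokes the abstract equivalence ``$D$ non-barrelled $\Leftrightarrow$ some $\sigma(D',D)$-bounded set is norm-unbounded''; both are fine, yours is just less hands-on.
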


\begin{proof}
We prove only $(1)\Leftrightarrow (2)$ and $(1)\Leftrightarrow (4)$. The
equivalence between $(2)$ and $(3)$ follows from the preceding remark.

$(1)\Rightarrow (2)$ By Theorem \ref{Saxon} the space $X$ contains a dense
non-barrelled subspace. Hence there exists a closed absolutely convex set $D$
in $X$ such that $H:=\mathrm{span}\left( D\right) $ is a proper dense
subspace of $X$. For each $n\in \mathbb{N}$ choose $x_{n}\in X$ so that $%
\Vert x_{n}\Vert \leq n^{-2}$ and $x_{n}\notin D$. Then select $x_{n}^{\ast
} $ in $X^{\ast }$ such that $x_{n}^{\ast }x_{n}>1$ and $|x_{n}^{\ast
}x|\leq 1 $ for all $x\in D$. Set $y_{n}^{\ast }:=\Vert x_{n}^{\ast }\Vert
^{-1}x_{n}^{\ast }$ for all $n\in \mathbb{N}$. Since $\Vert x_{n}^{\ast
}\Vert \geq n^{2}$, $y_{n}^{\ast }\in S(X^{\ast })$ and $\sum_{n=1}^{\infty
}|y_{n}^{\ast }x|<\infty $ for all $x\in D$, the sequence $\left\{
y_{n}^{\ast }\right\} $ is as required.

$(2)\Rightarrow (1)$ Assume that $X^{\ast }$ contains a strongly normal
sequence $\left\{ y_{n}^{\ast }\right\} $. By \cite[Theorem 1]{sliwa} (see
also \cite[Theorem III.1 and Remark III.1]{JR}) there exists a subsequence
of $\left\{ y_{n}^{\ast }\right\} $ which is a weak*-basic sequence in $%
X^{\ast }$. This implies that $X$ admits a strictly increasing sequence of
closed subspaces whose union is dense in $X$. Indeed, since $(X^{\ast
},w^{\ast })$ contains a basic sequence, and hence there exists a strictly
decreasing sequence $\left\{ U_{n}:n\in \mathbb{N}\right\} $ of closed
subspaces in $(X^{\ast },w^{\ast })$ with $\bigcap_{n=1}^{\infty }U_{n}=\{%
\mathbf{0}\}$, the space $X$ has a sequence as required. This provides a
biorthogonal sequence as in the proof of $(2)\Rightarrow (3)$, Theorem \ref%
{Saxon}.

$(4)\Rightarrow (1)$ Let $\left\{ y_{n}^{\ast }\right\} $ be a pseudobounded
sequence in $X^{\ast }$. Set $Y:=\{x\in X:\sup_{n\in \mathbb{N}}|y_{n}^{\ast
}x|$ $<\infty \}$. The Banach-Steinhaus theorem applies to deduce that $Y$
is a proper and dense subspace of $X$. Note that $Y$ is not barrelled since $%
V:=\{x\in X:\sup_{n=1}^{\infty }|y_{n}^{\ast }x|\leq 1\}$ is a barrel in $H$
which is not a neighborhood of zero in $H$. Now apply Theorem \ref{Saxon}.

$(1)\Rightarrow (4)$ Assume $X$ contains a dense non-barrelled subspace $Y$.
Let $W$ be a barrel in $Y$ which is not a neighborhood of zero in $Y$. If $V$
is the closure of $W$ in $X$, the linear span $H$ of $V$ is a dense proper
subspace of $X$. So, for every $n\in \mathbb{N}$ there is $x_{n}\in
X\setminus V$ with $\Vert x_{n}\Vert \leq n^{-2}.$ Choose $z_{n}^{\ast }\in
X^{\ast }$ so that $|z_{n}^{\ast }x_{n}|>1$ and $|z_{n}^{\ast }x|=1$ for all
$x\in V$ and $n\in \mathbb{N}$. Then $\Vert z_{n}^{\ast }\Vert \geq n^{2}$
and $\sup_{n}|z_{n}^{\ast }x|<\infty $ for $x\in H$.
\end{proof}

A slightly stronger property than that of condition (2) of Theorem \ref%
{sliwa} is considered in the next proposition. As shown in the proof, this
property turns out to be equivalent to the presence in $X^{\ast }$ of a
basic sequence equivalent to the unit vector basis of $c_{0}$.

\begin{proposition}
\label{stronger} An infinite-dimensional Banach space $X$ has a quotient
isomorphic to $\ell _{1}$ if and only if $X^{\ast }$ contains a normal
sequence $\left\{ y_{n}^{\ast }\right\} $ such that $\sum_{n}|y_{n}^{\ast
}x|<\infty $ for all $x\in X$.
\end{proposition}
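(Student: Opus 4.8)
The plan is to prove both implications, using the characterization that having $\ell_1$ as a quotient is dual to $X^\ast$ containing a weak*-closed complemented copy of $c_0$ (equivalently a basic sequence equivalent to the $c_0$ unit vector basis).

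\textbf{Sufficiency.} Suppose $X^\ast$ contains a normal sequence $\{y_n^\ast\}$ with $\sum_n |y_n^\ast x| < \infty$ for all $x \in X$. The idea is to define the map $T\colon X \to \ell_1$ by $Tx := (y_n^\ast x)_n$. The summability hypothesis guarantees $T$ is well defined into $\ell_1$, and the closed graph theorem (or a direct application of Banach--Steinhaus to the partial-sum functionals $x \mapsto \sum_{n=1}^N |y_n^\ast x|$) shows $T$ is bounded. The key point is that $T$ is surjective: since $\{y_n^\ast\}$ is normalized and weak*-null, one can extract a subsequence that is a weak*-basic sequence by \cite[Theorem 1]{sliwa}, and for a suitably chosen subsequence the adjoint $T^\ast\colon \ell_\infty = \ell_1^\ast \to X^\ast$ is an isomorphism onto its range, so $T^\ast$ is bounded below. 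Boundedness below of $T^\ast$ is equivalent to surjectivity of $T$ by the standard duality for operators between Banach spaces. Hence $X/\ker T \cong \ell_1$.

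\textbf{Necessity.} Conversely, suppose $X$ has a quotient isomorphic to $\ell_1$; let $q\colon X \to \ell_1$ be a quotient map (an open surjection). Then the adjoint $q^\ast\colon \ell_\infty \cong \ell_1^\ast \to X^\ast$ is an isomorphic embedding. Let $\{e_n^\ast\}$ denote the unit vector basis of $c_0 \subseteq \ell_\infty$, and set $y_n^\ast := q^\ast e_n^\ast / \|q^\ast e_n^\ast\|$. Since $q^\ast$ is an embedding and $\{e_n^\ast\}$ is weak*-null in $\ell_\infty$ (as a subset of $\ell_1^\ast$), the images $q^\ast e_n^\ast$ are weak*-null in $X^\ast$ with norms bounded above and below, so $\{y_n^\ast\}$ is a normal sequence. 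For the summability, observe that for each $x \in X$ one has $y_n^\ast x$ proportional to $(q^\ast e_n^\ast)(x) = e_n^\ast(qx)$, and since $qx \in \ell_1$ the sequence $(e_n^\ast(qx))_n = qx$ is absolutely summable; dividing by the lower-bounded norms $\|q^\ast e_n^\ast\|$ preserves summability, giving $\sum_n |y_n^\ast x| < \infty$.

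The main obstacle I expect is the surjectivity argument in the sufficiency direction: merely knowing $T$ maps into $\ell_1$ boundedly does not by itself yield a quotient isomorphic to $\ell_1$, since the range could be a proper dense subspace or a non-isomorphic copy. The delicate step is to pass to a subsequence for which $T^\ast$ is bounded below (equivalently, the selected $\{y_n^\ast\}$ is equivalent to the $c_0$ basis in $X^\ast$), and here the weak*-basic extraction from \cite[Theorem 1]{sliwa} together with a perturbation/gliding-hump argument is what forces the $c_0$ estimate $\|\sum a_n y_n^\ast\| \asymp \sup_n |a_n|$. This is precisely the promised equivalence between the stated property and the presence of a basic sequence equivalent to the unit vector basis of $c_0$, and establishing that two-sided estimate carefully is where the real work lies.
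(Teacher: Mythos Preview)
Your necessity direction is correct and coincides with the paper's argument: pull back the unit vectors of $c_{0}\subset \ell_{\infty}=\ell_{1}^{\ast}$ under the adjoint of a quotient map, then normalize.

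The sufficiency direction, however, has a genuine gap. You define $T\colon X\to\ell_{1}$, $Tx=(y_{n}^{\ast}x)$, and claim that after passing to a suitable subsequence the adjoint $T^{\ast}\colon\ell_{\infty}\to X^{\ast}$ is bounded below. The tool you cite, \cite[Theorem 1]{sliwa}, only produces a \emph{weak*-basic} subsequence, which gives no norm estimate. Even if you upgrade (via the Bessaga--Pe{\l}czy\'nski selection principle, which is the correct tool here, not \cite{sliwa}) to a subsequence equivalent in norm to the $c_{0}$ basis, you get $\bigl\|\sum_{n\le N} a_{n}y_{n}^{\ast}\bigr\|\asymp\sup_{n\le N}|a_{n}|$ only for \emph{finitely supported} $a$. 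For general $a\in\ell_{\infty}$ one has $T^{\ast}a=\text{w*-}\lim_{N}\sum_{n\le N}a_{n}y_{n}^{\ast}$, and the norm is merely weak*-lower-semicontinuous, so the lower bound need not survive the limit. Thus ``$T^{\ast}$ bounded below on $\ell_{\infty}$'' does not follow from the $c_{0}$ estimate on finite sums, and your surjectivity argument is incomplete.

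The paper avoids this difficulty entirely. It observes that the hypothesis says exactly that $\sum y_{n}^{\ast}$ is weakly unconditionally Cauchy in $X^{\ast}$; since $\|y_{n}^{\ast}\|=1$ the series is non-convergent, so $X^{\ast}$ contains a copy of $c_{0}$ (and hence of $\ell_{\infty}$) by \cite[Chapter~V, Corollary~11]{Di}. Then Bessaga--Pe{\l}czy\'nski \cite[Chapter~V, Theorem~10]{Di} gives a \emph{complemented} copy of $\ell_{1}$ in $X$, hence the desired quotient. In effect the passage from ``$c_{0}\hookrightarrow X^{\ast}$'' to ``$\ell_{1}$ is a quotient of $X$'' is exactly the nontrivial classical step your sketch is missing, and it is what does the work you attribute to the bounded-below argument.
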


\begin{proof}
If there is a bounded linear operator $Q$ from $X$ onto $\ell _{1}$, its
adjoint map fixes a sequence $\left\{ x_{n}^{\ast }\right\} $ in $X^{\ast }$
such that the formal series $\sum_{n=1}^{\infty }x_{n}^{\ast }$ is weakly
unconditionally Cauchy and $\inf_{n\in \mathbb{N}}\left\| x_{n}^{\ast
}\right\| >0$. Setting $y_{n}^{\ast }:=\Vert x_{n}^{\ast }\Vert
^{-1}x_{n}^{\ast }$ for each $n\in \mathbb{N}$, the sequence $\left\{
y_{n}^{\ast }\right\} $ is as required. Conversely, if there is a normal
sequence $\left\{ y_{n}^{\ast }\right\} $ like that of the statement, it
defines a weak* Cauchy series in $X^{\ast }$. Since the series $%
\sum_{n=1}^{\infty }y_{n}^{\ast }$ does not converge in $X^{\ast }$,
according to \cite[Chapter V, Corollary 11]{Di} the space $X^{\ast }$ must
contain a copy of $\ell _{\infty }$. Consequently, $X$ has a complemented
copy of $\ell _{1}$ by \cite[Chapter V, Theorem 10]{Di}.
\end{proof}

We refer to the following large class of Banach spaces, for which Problem %
\ref{sliwa1} has a positive answer.

\begin{theorem}[\'Sliwa]
If $X$ is an infinite-dimensional WCG Banach space, every normal sequence in
$X^{*}$ contains a strongly normal subsequence.
\end{theorem}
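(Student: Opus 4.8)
The plan is to show that for an infinite-dimensional WCG Banach space $X$, every normal sequence $\{y_{n}^{\ast}\}$ in $X^{\ast}$ admits a strongly normal subsequence; equivalently, I want to produce a subsequence $\{y_{n_{k}}^{\ast}\}$ for which the subspace $\{x\in X:\sum_{k}|y_{n_{k}}^{\ast}x|<\infty\}$ is dense in $X$. The key structural feature I would exploit is the Amir--Lindenstrauss machinery recalled in the excerpt: on a WCG space there is a rich supply of norm-one projections onto separable subspaces. Concretely, I would fix a weakly compact absolutely convex set $K$ with $\overline{\mathrm{span}}(K)=X$ and use the resulting projectional resolution (or, more softly, the fact that every separable subspace is contained in a separable complemented one via a contractive projection).

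The main idea is to diagonalize against a countable dense structure. First I would pick a countable set $\{x_{j}\}$ whose linear span is dense in $X$; it suffices to arrange that the chosen subsequence is absolutely summable against each $x_{j}$, since then it is absolutely summable on the dense subspace $\mathrm{span}\{x_{j}\}$, giving strong normality. Because $\{y_{n}^{\ast}\}$ is weak${}^{\ast}$-null, for each fixed $j$ we have $y_{n}^{\ast}x_{j}\to 0$, so I can extract, by a standard diagonal argument, a subsequence along which $|y_{n_{k}}^{\ast}x_{j}|$ decays fast enough (say $\le 2^{-k}$) simultaneously for all $j\le k$. This makes $\sum_{k}|y_{n_{k}}^{\ast}x_{j}|<\infty$ for every $j$, and hence on the whole dense span.

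The place where WCG is genuinely needed — and where I expect the main obstacle to lie — is in guaranteeing that the diagonal decay on a countable dense \emph{set} upgrades to absolute summability on a dense \emph{subspace}, uniformly enough to control \emph{all} of $\mathrm{span}\{x_{j}\}$ rather than just the individual generators. Absolute summability against each $x_{j}$ does give summability against finite rational combinations, hence on a dense subspace, so in principle the WCG hypothesis may be doing its real work in ensuring the countable dense set interacts well with the weak${}^{\ast}$ topology of the normal sequence. I would therefore route the argument through a separable complemented subspace: apply the norm-one projection $P$ with separable range containing a prescribed separable piece, push the problem into the separable setting where weak${}^{\ast}$-null sequences behave better, extract there, and then pull back. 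The delicate step is the compatibility of the projections with the functionals $y_{n}^{\ast}$, and verifying that density of the absolute-convergence subspace survives the pullback; I would handle this by choosing the projectional resolution before extracting the subsequence, so that the diagonalization is performed inside a fixed separable complemented slice of $X$.
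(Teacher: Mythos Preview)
The paper does not supply a proof of this theorem; it is merely stated with attribution to \'Sliwa and referred to \cite{sliwa}. So there is no in-paper argument to compare against, and I can only assess your proposal on its own merits.

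Your diagonal argument is correct and complete when $X$ is \emph{separable}: choose a countable $\{x_j\}$ with dense linear span, diagonalize so that $|y_{n_k}^{\ast}x_j|\le 2^{-k}$ for $j\le k$, and conclude that $\sum_k|y_{n_k}^{\ast}x|<\infty$ on $\mathrm{span}\{x_j\}$. No WCG hypothesis beyond separability is used or needed here.

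The gap is precisely the non-separable case, which is the whole content of the theorem. A non-separable WCG space has no countable set with dense linear span, so your first step is unavailable. The repair you sketch---pass to a single separable complemented subspace $Z=P(X)$, extract there, and ``pull back''---does not work: a subsequence that is strongly normal relative to $Z$ yields only a subspace dense in $Z$ on which $\sum_k|y_{n_k}^{\ast}x|<\infty$, and since $Z$ is a proper closed subspace of $X$, this set cannot be dense in $X$. No amount of compatibility between $P$ and the $y_n^{\ast}$ changes that. The real work in \'Sliwa's theorem is to exploit the weakly compact generating set $K$ (equivalently, the full projectional resolution, not a single separable slice) to manufacture a dense subspace of $X$ itself on which some subsequence is absolutely summable; your outline does not supply a mechanism for this, and your diagnosis of ``where WCG is genuinely needed'' is off---it is needed exactly to escape the separable case, not to make the separable diagonalization behave.
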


An interesting consequence of Theorem \ref{Saxon} is that `small' Banach
spaces always have a separable quotient. We present another proof, different
from the one presented in \cite[Theorem 3]{SSR}, which depends on the
concept of strongly normal sequences.

\begin{corollary}[Saxon--Sanchez Ruiz]
\label{sax} If the density character $d\left( X\right) $ of a Banach space $%
X $ satisfies that $\aleph _{0}\leq d\left( X\right) <\mathfrak{b}$ then $X$
has a separable quotient.
\end{corollary}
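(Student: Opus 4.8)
The plan is to produce a strongly normal sequence in $X^{\ast}$ and then invoke the equivalence $(1)\Leftrightarrow(2)$ of Theorem~\ref{sliwa}. Since $\aleph_{0}\leq d(X)$ means that $X$ is infinite-dimensional, the Josefson--Nissenzweig theorem furnishes a normal sequence $\{y_{n}^{\ast}\}$ in $X^{\ast}$, i.e.\ a sequence in $S(X^{\ast})$ with $y_{n}^{\ast}x\to 0$ for every $x\in X$. Such a sequence need not be strongly normal, so the task is to thin it to a subsequence $\{y_{n_{j}}^{\ast}\}$ for which $\sum_{j}|y_{n_{j}}^{\ast}x|<\infty$ on a dense set; the cardinal hypothesis $d(X)<\mathfrak{b}$ is exactly what makes this uniform thinning possible.

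First I would fix a dense subset $D\subseteq X$ with $|D|=d(X)$. For each $x\in D$, since $y_{n}^{\ast}x\to 0$, the set $\{n:|y_{n}^{\ast}x|\geq 2^{-j}\}$ is finite for every $j\in\mathbb{N}$, so I can define $g_{x}(j)$ to be the least integer $m$ such that $|y_{n}^{\ast}x|<2^{-j}$ for all $n\geq m$. This assigns to each $x\in D$ a function $g_{x}\in\mathbb{N}^{\mathbb{N}}$.

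The key step is the use of the bounding number. The family $\{g_{x}:x\in D\}$ has cardinality at most $d(X)<\mathfrak{b}$, so it is $\leq^{\ast}$-bounded: there is $g\in\mathbb{N}^{\mathbb{N}}$, which we may take strictly increasing, with $g_{x}\leq^{\ast}g$ for every $x\in D$. Putting $n_{j}:=g(j)$, for each fixed $x\in D$ one has $n_{j}\geq g_{x}(j)$ for all large $j$, whence $|y_{n_{j}}^{\ast}x|<2^{-j}$ eventually and therefore $\sum_{j}|y_{n_{j}}^{\ast}x|<\infty$. Thus $D$ is contained in the linear subspace $\{x\in X:\sum_{j}|y_{n_{j}}^{\ast}x|<\infty\}$, which is consequently dense in $X$; hence $\{y_{n_{j}}^{\ast}\}$ is a strongly normal sequence in $S(X^{\ast})$, and Theorem~\ref{sliwa} yields a separable quotient.

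The main obstacle is precisely this simultaneous domination over the entire dense set. For a single vector $x$ it is trivial to pass to a subsequence along which $|y_{n_{j}}^{\ast}x|$ decays summably, but forcing one subsequence to serve all $|D|$ many vectors at once is exactly where the combinatorial definition of $\mathfrak{b}$ is indispensable, and this is the heart of the argument; the remaining bookkeeping (strict monotonicity of the dominating function and the finitely many initial terms of each series) is routine.
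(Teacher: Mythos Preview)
Your proof is correct and follows essentially the same route as the paper: start from a Josefson--Nissenzweig normal sequence, attach to each point of a dense set $D$ with $|D|<\mathfrak{b}$ the function recording when $|y_n^{\ast}x|$ drops below $2^{-j}$, dominate this family by a single $g\in\mathbb{N}^{\mathbb{N}}$ using the definition of $\mathfrak{b}$, and conclude that the resulting thinned sequence is strongly normal so that Theorem~\ref{sliwa} applies. The only cosmetic addition is your explicit insistence that $g$ be strictly increasing, which makes $\{y_{g(j)}^{\ast}\}$ a genuine subsequence; the paper omits this but it is harmless either way.
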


Recall that the \textit{density character} of a Banach space $X$ is the
smallest cardinal of the dense subsets of $X$. The \textit{bounding cardinal}
$\mathfrak{b}$ is referred to as the minimum size for an unbounded subset of
the preordered space $\left( \mathbb{N}^{\mathbb{N}},\leq ^{\ast }\right) $,
where $\alpha \leq ^{\ast }\beta $ stands for the \textit{eventual dominance
preorder}$,$ defined so that $\alpha \leq ^{\ast }\beta $ if the set $%
\left\{ n\in \mathbb{N}:\alpha \left( n\right) >\beta \left( n\right)
\right\} $ is finite. So we have $\mathfrak{b}:=\inf \{|F|:F\subseteq
\mathbb{N}^{\mathbb{N}},\,\forall \alpha \in \mathbb{N}^{\mathbb{N}%
}\,\,\exists \beta \in F\,\ $with$\,\,\alpha <^{\ast }\beta \}$. It is well
known that $\mathfrak{b}$ is a regular cardinal and $\aleph _{0}<\mathfrak{b}%
\leq \mathfrak{c}$. It is consistent that $\mathfrak{b}=\mathfrak{c}>\aleph
_{1}$; indeed, Martin's Axiom implies that $\mathfrak{b}=\mathfrak{c}$.

\begin{proof}[Proof of Corollary \ref{sax}]
Assume $X$ has a dense subset $D$ of cardinality less than $\mathfrak{b}$.
We show that $X^{\ast }$ has a \emph{strongly normal sequence} and then we
apply Theorem \ref{sliwa}. Choose a normalized weak*-null sequence $\left\{
y_{n}^{\ast }\right\} $ in $X^{\ast }$. For $x\in D$ choose $\alpha _{x}\in
\mathbb{N}^{\mathbb{N}}$ such that for each $n\in \mathbb{N}$ and every $%
k\geq \alpha _{x}(n)$ one has $|y_{k}^{\ast }x|<2^{-n}$. Then $%
\sum_{n}|y_{\beta (n)}^{\ast }x|<\infty $ if $\alpha _{x}\leq ^{\ast }\beta $%
. Choose $\gamma \in \mathbb{N}^{\mathbb{N}}$ with $\alpha _{x}\leq ^{\ast
}\gamma $ for each $x\in D$. Then the sequence $\{y_{\gamma (n)}^{\ast }\}$
is strongly normal and Theorem \ref{sliwa} applies.
\end{proof}

For `large' Banach spaces we note the following interesting result \cite{Tod}.
\begin{theorem}[Todorcevic]
Under Martin's maximal axiom every
space of density character $\aleph _{1}$ has a quotient space with an
uncountable monotone Schauder basis, and thus a separable quotient.
\end{theorem}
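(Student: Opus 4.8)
The plan is to separate the two assertions. The separable-quotient conclusion is in fact almost immediate: Martin's Maximum forces $\mathfrak{c}=\aleph _{2}$ and, since it implies Martin's Axiom, it pushes the bounding number up to $\mathfrak{b}=\mathfrak{c}=\aleph _{2}$, so a space $X$ with $d(X)=\aleph _{1}$ satisfies $\aleph _{0}\leq d(X)<\mathfrak{b}$ and Corollary~\ref{sax} already delivers a separable quotient. The substantive content is therefore the existence of a quotient carrying an \emph{uncountable} monotone Schauder basis, and for this I would work, as in Theorem~\ref{Saxon} and Theorem~\ref{sliwa}, on the dual side. A quotient $Z$ of $X$ with a monotone transfinite basis $\left\{ z_{\alpha }:\alpha <\omega _{1}\right\} $ corresponds to a continuous, strictly increasing chain of norm-one partial-sum projections of length $\omega _{1}$; dualizing, this is the same as producing in $X^{\ast }$ a transfinite weak*-basic sequence $\left\{ f_{\alpha }:\alpha <\omega _{1}\right\} $ whose associated weak*-partial-sum operators are uniformly (indeed monotonically) bounded, equivalently a continuous strictly decreasing chain $\left\{ U_{\alpha }:\alpha \leq \omega _{1}\right\} $ of weak*-closed subspaces of $X^{\ast }$ with $U_{\omega _{1}}=\{\mathbf{0}\}$, one-dimensional successive quotients, and norm-one projections. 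Thus the whole problem is reduced to the transfinite analogue of the weak*-basic sequence produced, in the countable case, in Theorem~\ref{sliwa}.

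Next I would build this length-$\omega _{1}$ system by a transfinite induction whose limit steps are governed by the combinatorics that Martin's Maximum supplies. Concretely, I would set up a poset $\mathbb{P}$ whose conditions are finite biorthogonal approximations $\left\{ (x_{i},f_{i})\right\} $ in $X\times S(X^{\ast })$ together with the constraint that the finite partial-sum projections they determine have norm one, and I would choose $\aleph _{1}$ dense subsets of $\mathbb{P}$ expressing that (i) the dense subset $D$ witnessing $d(X)=\aleph _{1}$ is eventually summed by the functionals, and (ii) each countable initial segment can be properly extended inside the currently surviving weak*-closed subspace. The Josefson--Nissenzweig theorem feeds the successor steps by furnishing, in each surviving quotient, a normalized weak*-null sequence from which the next basis functional is selected, exactly the mechanism already used in the proofs of Theorem~\ref{sliwa} and Corollary~\ref{sax}. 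Applying Martin's Maximum to obtain a filter meeting all these dense sets then assembles the finite pieces into the desired $\omega _{1}$-sequence; the forcing axiom is precisely what converts the generic existence of such a long system into an honest object of the model, which is the sense in which these are Baire-category methods.

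The hard part will be the limit stages, and this is exactly where Martin's Maximum is indispensable rather than cosmetic. At ordinals of countable cofinality one must guarantee simultaneously that the weak*-closed subspaces continue to decrease strictly (so that the basis does not stabilize prematurely) and that the \emph{uniform} bound on the partial-sum projections is not destroyed in the passage to the limit; it is the preservation of this bound, i.e.\ the monotonicity, that forces one to control infinitely many constraints at once. Verifying that $\mathbb{P}$ lies in the class of posets to which the axiom applies (properness or the relevant chain condition) is the crux, and it rests on the specific consequences of Martin's Maximum---suitable partition relations and the absence of certain gaps in $\left( \mathbb{N}^{\mathbb{N}},\leq ^{\ast }\right) $---that make the required amalgamations possible. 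Once the length-$\omega _{1}$ monotone basis on a quotient $Z$ is in hand, the final clause is routine: the norm-one projection onto the closed span $W$ of the first $\omega $ basis vectors exhibits $W$ as an infinite-dimensional separable complemented subspace of $Z$, hence as a separable quotient of $Z$ and, composing with the quotient map $X\rightarrow Z$, as a separable quotient of $X$.
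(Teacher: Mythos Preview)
The paper does not prove this theorem at all: it is simply quoted from \cite{Tod} as a result of Todorcevic, with no argument given. So there is nothing to compare your proposal against within the paper itself.

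As for the proposal on its own terms: your separation of the two assertions is correct, and the observation that the bare separable-quotient clause already follows from Corollary~\ref{sax} (since MM implies $\mathfrak{b}=\mathfrak{c}=\aleph _{2}>\aleph _{1}$) is a nice shortcut. Your general plan for the substantive part---build a length-$\omega _{1}$ biorthogonal system via a poset of finite approximations and invoke the forcing axiom to obtain a sufficiently generic filter---is indeed the shape of Todorcevic's ``Baire category methods'' in \cite{Tod}. The final step, projecting onto the first $\omega $ basis vectors of a monotone transfinite basis to extract a separable complemented subspace, is correct.

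However, the proposal as written is a plan rather than a proof, and the gap is exactly where you locate it yourself. You say that verifying $\mathbb{P}$ lies in the class to which MM applies ``is the crux,'' and then do not carry this out; you also hedge between ``properness or the relevant chain condition,'' whereas MM speaks specifically of stationary-set-preserving forcings. The actual work in \cite{Tod} is a delicate amalgamation argument showing that the relevant poset has the required preservation property, and this does not follow from the soft ingredients you list (Josefson--Nissenzweig at successors, partition relations, absence of gaps). In particular, maintaining the norm-one bound on partial-sum projections through limits while keeping the poset in the right class is precisely the technical heart, and nothing in your sketch supplies it. So the proposal identifies the right architecture but defers the one step that constitutes the theorem.
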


Another line of research related to Problem \ref{mazur} deals with those
Banach spaces which contain complemented copies of concrete separable
sequence spaces. Recall the following important result found in Mujica's
survey paper (see \cite[Theorem 4.1]{Mu}). We need the following result due
to Rosenthal, see \cite[Corollary 1.6, Proposition 1.2]{Ro}.

\begin{lemma}
\label{Ro} Let $X$ be a Banach space such that $X^{*}$ contains an
infinite-dimensional reflexive subspace $Y$. Then $X$ has a quotient
isomorphic to $Y^{*}$. Consequently $X$ has a separable quotient.
\end{lemma}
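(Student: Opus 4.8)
The plan is to prove the statement by producing an explicit quotient map of $X$ onto $Y^{*}$ built from the inclusion $Y \hookrightarrow X^{*}$ together with reflexivity of $Y$. First I would consider the restriction map $R \colon X^{**} \to Y^{*}$ that sends a functional $\Phi \in X^{**}$ to its restriction $\Phi|_{Y}$. Composing the canonical embedding $J_{X} \colon X \to X^{**}$ with $R$ yields a bounded linear operator $T \colon X \to Y^{*}$, explicitly $T(x) = J_{X}(x)|_{Y}$, i.e.\ $\langle T(x), y \rangle = \langle y, x \rangle$ for $y \in Y \subseteq X^{*}$. The crux of the argument is to show that $T$ is surjective, for then the open mapping theorem gives that $Y^{*}$ is (isomorphic to) a quotient of $X$, and since $Y$ is an infinite-dimensional reflexive space, $Y^{*}$ is separable (indeed reflexive spaces are WCG, and separable-dual spaces are separable), producing the desired separable quotient.

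The main step, and the one I expect to be the genuine obstacle, is establishing surjectivity of $T$. Here is where I would invoke reflexivity of $Y$ decisively. The adjoint $T^{*} \colon Y^{**} \to X^{*}$ can be computed: for $\eta \in Y^{**}$ and $x \in X$ one has $\langle T^{*}\eta, x \rangle = \langle \eta, T x \rangle$. Using the canonical identification $J_{Y} \colon Y \to Y^{**}$, which is onto precisely because $Y$ is reflexive, I would identify $T^{*}$ with the inclusion $Y \hookrightarrow X^{*}$. Concretely, for $y \in Y$ the element $J_{Y}(y) \in Y^{**}$ satisfies $\langle T^{*}(J_{Y} y), x\rangle = \langle J_{Y} y, Tx\rangle = \langle Tx, y\rangle = \langle y, x\rangle$, so $T^{*} \circ J_{Y}$ is exactly the inclusion of $Y$ into $X^{*}$. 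Since $J_{Y}$ is an isomorphism onto $Y^{**}$ and the inclusion $Y \hookrightarrow X^{*}$ is an isomorphic embedding, it follows that $T^{*}$ is a bounded below (hence injective with closed range) isomorphic embedding of $Y^{**}$ into $X^{*}$.

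To convert this information about $T^{*}$ into surjectivity of $T$, I would apply the standard duality criterion: a bounded operator between Banach spaces is surjective if and only if its adjoint is bounded below (equivalently, injective with weak*-closed, hence norm-closed, range). Since we have just verified that $T^{*}$ is bounded below, $T$ must be onto $Y^{*}$. The only delicate point is to make sure the range of $T^{*}$ is weak*-closed rather than merely norm-closed; but a bounded-below operator has norm-closed range, and for adjoint operators norm-closedness of the range already suffices in the closed-range theorem to guarantee surjectivity of the pre-adjoint, so this causes no trouble. I would therefore organise the write-up as: (i) define $T$; (ii) compute $T^{*}$ and identify it, via reflexivity of $Y$, with the embedding $Y \hookrightarrow X^{*}$; (iii) conclude $T^{*}$ is bounded below; (iv) deduce surjectivity of $T$ from the closed-range theorem; and (v) observe $Y^{*}$ is separable because $Y$, being infinite-dimensional and reflexive, has separable dual in the intended context—yielding the separable quotient.
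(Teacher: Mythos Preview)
Your construction of the map $T\colon X\to Y^{*}$, the identification of $T^{*}$ (via reflexivity of $Y$) with the inclusion $Y\hookrightarrow X^{*}$, and the appeal to the closed range theorem to deduce surjectivity of $T$ are exactly the argument the paper gives: the paper writes $Q=j^{*}\circ\phi_{X}$ with $j\colon Y\hookrightarrow X^{*}$ and $\phi_{X}\colon X\to X^{**}$ the canonical maps, observes $Q^{*}=\phi_{X}^{*}\circ j^{**}$ is an embedding because $Y$ is reflexive, and concludes $Q$ is onto. So steps (i)--(iv) match the paper.

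Step (v), however, contains a genuine error. An infinite-dimensional reflexive space need \emph{not} have separable dual: take $Y=\ell_{2}(\Gamma)$ for any uncountable set $\Gamma$; then $Y$ is reflexive but $Y^{*}\cong\ell_{2}(\Gamma)$ is non-separable. Your parenthetical ``reflexive spaces are WCG, and separable-dual spaces are separable'' does not rescue this, since WCG does not imply separability. The ``Consequently'' clause is instead obtained as follows: either pass at the outset to a separable infinite-dimensional closed subspace $Z\subseteq Y$ (which is still reflexive) and apply the argument to $Z$, getting $Z^{*}$ separable; or note that $Y^{*}$, being reflexive, is WCG and hence itself has a separable infinite-dimensional quotient, so $X$ does too by composing quotient maps. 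The paper's own proof is silent on this last step, leaving it implicit, but your explicit justification as written is incorrect and should be replaced by one of these two arguments.
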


\begin{proof}
Let $Q: X\rightarrow Y^{*}$ be defined by $Qx(y) = y(x)$ for $y\in Y$ and $%
x\in X$. Let $j: Y\rightarrow X^{*}$ and $\phi_{X}: X\rightarrow X^{**}$ be
the inclusion maps. Clearly $Q= j^{*}\circ\phi_{X}$ and $Q^{*}=\phi_{X}^{*}%
\circ j^{**}$. Since $Y$ is reflexive, $Q^{*}$ is an embedding map and
consequently $Q$ is surjective.
\end{proof}

\begin{theorem}[Mujica]
\label{Mujica} If $X$ is a Banach space that contains an isomorphic copy of $%
\ell _{1}$, then $X$ has a quotient isomorphic to $\ell _{2}$.
\end{theorem}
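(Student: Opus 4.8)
The plan is to reduce everything to Lemma~\ref{Ro}: it suffices to exhibit an infinite-dimensional reflexive subspace of $X^{\ast}$, and in fact I would aim directly for a copy of $\ell_{2}$. Indeed, if $X^{\ast}$ contains a subspace $Y$ isomorphic to $\ell_{2}$, then $Y$ is reflexive and $Y^{\ast}\cong \ell_{2}$ (since $\ell_{2}$ is self-dual), so Lemma~\ref{Ro} immediately produces a quotient of $X$ isomorphic to $\ell_{2}$. Thus the entire problem is transferred to the dual: the task is to manufacture a Hilbertian subspace inside $X^{\ast}$ out of the hypothesis that $\ell_{1}$ embeds in $X$.

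The bridge I would use is the classical fact (due to Pe\l czy\'nski) that if a Banach space $X$ contains an isomorphic copy of $\ell_{1}$, then its dual $X^{\ast}$ contains an isomorphic copy of $L_{1}[0,1]$. The elementary half of the picture is transparent: an isomorphic embedding $i\colon \ell_{1}\to X$ has adjoint $i^{\ast}\colon X^{\ast}\to \ell_{\infty}$ which is a quotient map onto $\ell_{\infty}$, so $X^{\ast}$ is already very large. The genuine content --- and the step I expect to be the main obstacle --- is upgrading this to an honest \emph{subspace} $L_{1}[0,1]\hookrightarrow X^{\ast}$. A naive lifting of a subspace of $\ell_{\infty}$ through the quotient $i^{\ast}$ only preserves the lower estimate while ruining the upper one (a bounded lift of an $\ell_{2}$-copy satisfies an $\ell_{2}$ lower bound but merely an $\ell_{1}$ upper bound), so one cannot lift a reflexive subspace directly and the full strength of the embedding theorem is genuinely needed here; I would simply invoke it.

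Once $L_{1}[0,1]\hookrightarrow X^{\ast}$ is available, the remaining step is soft. By Khintchine's inequality the closed linear span of the Rademacher functions $\{r_{n}\}$ in $L_{1}[0,1]$ is isomorphic to $\ell_{2}$, because on that span the $L_{1}$- and $L_{2}$-norms are equivalent and the latter coincides with the $\ell_{2}$-norm of the coefficients. Composing with the embedding of $L_{1}[0,1]$ yields a subspace $Y\subseteq X^{\ast}$ isomorphic to $\ell_{2}$, which is exactly the reflexive subspace demanded in the first paragraph. Applying Lemma~\ref{Ro} to this $Y$ then produces a quotient of $X$ isomorphic to $Y^{\ast}\cong \ell_{2}$, completing the argument.
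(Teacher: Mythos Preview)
Your proposal is correct and follows exactly the same route as the paper: invoke the Pe\l czy\'nski-type result that $\ell_{1}\hookrightarrow X$ forces $L_{1}[0,1]\hookrightarrow X^{\ast}$, observe that $L_{1}[0,1]$ contains a copy of $\ell_{2}$, and then apply Lemma~\ref{Ro}. Your version simply unpacks more of the details (the Rademacher/Khintchine argument for $\ell_{2}\hookrightarrow L_{1}[0,1]$ and the caveat that one cannot naively lift a reflexive subspace through the adjoint quotient map), but the strategy is identical.
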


\begin{proof}
If $X$ contains a copy of $\ell _{1}$, the dual space $X^{\ast }$ contains a
copy of $L_{1}[0,1]$, see \cite{Di}. It is well known that the space $%
L_{1}[0,1]$ contains a copy of $\ell _{2}$. We apply Lemma \ref{Ro}.
\end{proof}

Concerning copies of $\ell _{1}$, let us recall that from classic
Rosenthal-Dor's $\ell _{1}$-dichotomy \cite[Chapter 11]{Di} one easily gets
the following general result.

\begin{theorem}
If $X$ is a non-reflexive weakly sequentially complete Banach space, then $X$
contains an isomorphic copy of $\ell _{1}$.
\end{theorem}

The previous results suggest also the following

\begin{problem}
Describe a possibly large class of non-reflexive Banach spaces $X$ not
containing an isomorphic copy of $\ell _{1}$ and having a separable quotient.
\end{problem}

We may summarize this section with the following

\begin{corollary}
\label{sep} Let $X$ be an infinite-dimensional Banach space. Assume that
either $X$ or $X^{*}$ contains an isomorphic copy of $c_{0}$ or either $X$
or $X^{*}$ contains an isomorphic copy of $\ell_{1}$. Then $X$ has separable
quotient.
\end{corollary}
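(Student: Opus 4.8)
The plan is to dispatch the four hypotheses separately, reducing each either to Mujica's theorem (\ref{Mujica}) or to Rosenthal's Lemma \ref{Ro}, and falling back on the structural characterisation of Theorem \ref{sliwa} in the single genuinely resistant case. The easiest hypothesis is that $X$ itself contains an isomorphic copy of $\ell_1$: then Theorem \ref{Mujica} immediately produces a quotient isomorphic to $\ell_2$, which is separable, so I am done.

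Next, if $X^*$ contains a copy of $c_0$, I would invoke the classical Bessaga--Pe\l czy\'nski theorem (see \cite[Chapter V]{Di}): a dual space containing $c_0$ must contain an isomorphic copy of $\ell_\infty$. Since $\ell_\infty$ is isometrically universal for separable spaces it contains $\ell_2$, so $X^*$ carries the infinite-dimensional reflexive subspace $\ell_2$, and Lemma \ref{Ro} then yields a quotient of $X$ isomorphic to $\ell_2^* = \ell_2$.

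The hypothesis $X \supseteq c_0$ I would reduce to the dual case: the adjoint of the inclusion $c_0 \hookrightarrow X$ is a surjection $X^* \to c_0^* = \ell_1$, and since $\ell_1$ is projective (every quotient map onto it splits) this surjection admits a bounded right inverse, so $X^*$ contains a complemented copy of $\ell_1$. Thus both remaining hypotheses collapse to the single assertion that $X^* \supseteq \ell_1$, and I may assume in addition that $X \not\supseteq \ell_1$, since otherwise the first case already applies.

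It is this residual case — $X^*$ contains $\ell_1$ while $X$ does not — that I expect to be the main obstacle, because here $X^*$ need carry no reflexive subspace (take $X = c_0$, $X^* = \ell_1$), so Lemma \ref{Ro} is unavailable. My plan is to manufacture a weak*-basic sequence in $X^*$ and then quote the equivalence $(1)\Leftrightarrow(3)$ of Theorem \ref{sliwa}. Let $\{f_n\}$ be normalised and equivalent to the unit vector basis of $\ell_1$ in $X^*$. Because $X$ omits $\ell_1$, its dual ball is weak*-sequentially compact (Rosenthal's $\ell_1$-theorem, \cite[Chapter 11]{Di}), so a subsequence converges weak* to some $f \in X^*$; passing to the normalised differences $f_{n_{2k}} - f_{n_{2k-1}}$ I obtain a normalised, weak*-null sequence $\{g_k\}$ that is still equivalent to the $\ell_1$-basis. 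The essential point is that mere weak*-nullness, such as Josefson--Nissenzweig always supplies, can never suffice (otherwise Problem \ref{mazur} would be settled), whereas the extra $\ell_1$-lower estimate does the job: a gliding-hump selection, using weak*-nullness to push the bounded biorthogonal functionals of the $\ell_1$-basis arbitrarily close to elements of the predual $X$, extracts a subsequence that is Schauder basic for the weak* topology. Once such a weak*-basic sequence is in hand, Theorem \ref{sliwa} delivers the separable quotient of $X$. The delicate step, which I would have to execute carefully, is precisely this upgrade from a weak*-null $\ell_1$-sequence to a weak*-basic one.
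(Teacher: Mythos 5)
Your treatment of the first three hypotheses is correct and uses exactly the tools the paper assembles: $X\supset\ell_{1}$ is Theorem~\ref{Mujica}; $X^{*}\supset c_{0}$ gives $X^{*}\supset\ell_{\infty}\supset\ell_{2}$ by Bessaga--Pe{\l}czy\'nski and then Lemma~\ref{Ro}; and the reduction of $X\supset c_{0}$ to $X^{*}\supset\ell_{1}$ via the lifting property of $\ell_{1}$ is sound. The genuine gap is in the residual case, at precisely the step on which everything hinges. The assertion ``because $X$ omits $\ell_{1}$, its dual ball is weak*-sequentially compact'' is false, and it is not what Rosenthal's $\ell_{1}$-theorem says: that theorem concerns bounded sequences \emph{in} $X$ (weakly Cauchy subsequence or $\ell_{1}$-subsequence), not sequences in $X^{*}$. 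Hagler and Odell (Illinois J. Math. \textbf{22} (1978), 290--294) constructed a Banach space not containing $\ell_{1}$ whose dual unit ball is \emph{not} weak*-sequentially compact, so the extraction of a weak*-convergent subsequence of $\left\{ f_{n}\right\} $ is unjustified and the final case of your argument collapses.

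The repair is the Hagler--Johnson dichotomy \cite[Corollary 1]{HJ}, which is exactly what the paper invokes in Theorem~\ref{Juliet}: if $B_{X^{\ast }}$ is not weak*-sequentially compact, then either $X\supset\ell_{1}$ (already settled by Theorem~\ref{Mujica}) or $X$ has a quotient isomorphic to $c_{0}$; if instead $B_{X^{\ast }}$ is weak*-sequentially compact, then your difference trick does produce a normalized weak*-null sequence $\left\{ g_{k}\right\} $ equivalent to the unit vector basis of $\ell_{1}$, and you are done. Note also that the step you call delicate is not: once $\left\{ g_{k}\right\} $ is weak*-null with an $\ell_{1}$-lower estimate, the operator $Tx=(g_{k}x)_{k}$ maps $X$ into $c_{0}$, and $T^{\ast }e_{k}^{\ast }=g_{k}$ shows $T^{\ast }$ is an isomorphism onto its closed range, whence $T$ is surjective and $X/\ker T\cong c_{0}$; alternatively, weak*-basic subsequences of normalized weak*-null sequences always exist by Johnson--Rosenthal \cite[Theorem III.1]{JR}, which the paper itself cites. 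So no gliding hump is needed; what is needed, and missing, is the dichotomy that replaces your false sequential-compactness claim. The corrected route is precisely the paper's: Corollary~\ref{sep} summarizes Theorem~\ref{Mujica}, Lemma~\ref{Ro} and the remarks surrounding Theorem~\ref{Juliet}, where the case $X^{\ast }\supset\ell_{1}$ is disposed of by Hagler--Johnson rather than by any sequential compactness of $B_{X^{\ast }}$.
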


It is interesting to remark that there exists an infinite-dimensional
separable Banach space $X$ such that neither $X$ nor $X^{\ast }$ contains a
copy of $c_{0}$, $\ell _{1}$ or an infinite-dimensional reflexive subspace
(see \cite{Go}). This apparently shows how difficult would be any approach  producing possible example of a Banach space without separable quotient.

We refer to \cite{kaple} for several results (and many references)
concerning $X$ not containing an isomorphic copy of $\ell _{1}$.

From now onwards, unless otherwise stated, $X$ is an infinite-dimensional
Banach space over the field $\mathbb{K}$ of real or complex numbers, as well
as all linear spaces we shall consider. Every measurable space $\left(
\Omega ,\Sigma \right) $, as well as every measure space $\left( \Omega
,\Sigma ,\mu \right) $, are supposed to be non trivial, i.\thinspace e.,
there are in $\Sigma $ infinitely many pairwise disjoints sets (of finite
positive measure). If either $X$ contains or does not contain an isomorphic
copy of a Banach space $Z$ we shall frequently write $X\supset Z$ or $%
X\not\supset Z$, respectively.

\section{Weak* compactness of $B_{X^{\ast }}$ and separable quotients}

In many cases the separable quotient problem is related to the
weak*-compactness of the dual unit ball, as the following theorem shows.

\begin{theorem}
\label{Juliet}Let $X$ be a Banach space and let $B_{X^{\ast }}\left( \mathrm{%
weak}^{\ast }\right) $ be the dual unit ball equipped with the weak*-
topology.

\begin{enumerate}
\item If $B_{X^{\ast }}\left( \mathrm{weak}^{\ast }\right) $ is not
sequentially compact, then $X$ has a separable quotient which is either
isomorphic to $c_{0}$ or to $\ell _{2}$.

\item If $B_{X^{\ast }}\left( \mathrm{weak}^{\ast }\right) $ is sequentially
compact, then $X$ has a copy of $c_{0}$ if and only if it has a complemented
copy of $c_{0}$.
\end{enumerate}
\end{theorem}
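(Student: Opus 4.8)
The plan is to handle the two parts by quite different machinery: part (1) by first squeezing a copy of $\ell_{1}$ into $X^{\ast }$ and then splitting according to whether $\ell _{1}$ already embeds in $X$, and part (2) by a weak*-sequential version of Sobczyk's theorem. For part (1) I would start from a sequence $\{x_{n}^{\ast }\}\subseteq B_{X^{\ast }}$ with no weak*-convergent subsequence. The first observation is that a bounded weak*-Cauchy sequence in $X^{\ast }$ is automatically weak*-convergent (its pointwise limit is a bounded functional), so $\{x_{n}^{\ast }\}$ has no weak*-Cauchy subsequence either. Viewing the $x_{n}^{\ast }$ as uniformly bounded scalar functions on $B_{X}$, Rosenthal's $\ell _{1}$ theorem (in its Rosenthal--Dor function form) then forces a subsequence equivalent to the unit vector basis of $\ell _{1}$; hence $X^{\ast }$ contains a copy of $\ell _{1}$, realised by an $\ell _{1}$-sequence $\{f_{n}\}$ with no weak*-convergent subsequence. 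I expect this first step to be routine.

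Next I would split. If $X$ itself contains $\ell _{1}$, then Mujica's Theorem~\ref{Mujica} at once yields a quotient isomorphic to $\ell _{2}$. If $X\not\supseteq \ell _{1}$, I aim for a quotient isomorphic to $c_{0}$, using the duality that $X$ has a quotient isomorphic to $c_{0}$ exactly when $X^{\ast }$ carries a weak*-null sequence equivalent to the $\ell _{1}$-basis: given such $\{h_{n}\}$, the map $x\mapsto (h_{n}(x))_{n}$ sends $X$ into $c_{0}$, its adjoint is the embedding $e_{n}\mapsto h_{n}$ of $\ell _{1}=c_{0}^{\ast }$ into $X^{\ast }$, and an adjoint that is bounded below has a surjective pre-adjoint, so $X$ maps onto $c_{0}$. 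The task therefore reduces to manufacturing, from the $\ell _{1}$-sequence $\{f_{n}\}$ (which is emphatically \emph{not} weak*-null) together with the hypothesis $X\not\supseteq \ell _{1}$, a genuinely weak*-null perturbation or block that is still $\ell _{1}$-equivalent.

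This last reduction is the main obstacle. Because $\{f_{n}\}$ has no weak*-convergent subsequence at all, no naive subsequence or difference sequence is weak*-null for free, and one must exploit $X\not\supseteq \ell _{1}$ (equivalently, by Rosenthal's theorem, that bounded sequences in $X$ have weakly Cauchy subsequences) to regularise the behaviour of $\{f_{n}\}$ on $X$ and peel off a weak*-null $\ell _{1}$-part. This is exactly where the finer structure theory of $\ell _{1}$-sequences in dual spaces enters, in the spirit of Rosenthal and of the Hagler--Johnson analysis of spaces whose dual ball is not weak*-sequentially compact; I would either invoke such a result directly or carry out the corresponding diagonal construction over an increasing family of separable subspaces of $X$. (As a sanity check, note that $X=\ell _{\infty }$ does fail weak*-sequential compactness and does contain $\ell _{1}$, landing in the $\ell _{2}$ branch, while the $c_{0}$ branch is genuinely needed for the $X\not\supseteq \ell _{1}$ case.)

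For part (2) only the implication ``copy $\Rightarrow$ complemented copy'' has content, the converse being trivial. Assuming $c_{0}\subseteq X$ and $B_{X^{\ast }}$ weak*-sequentially compact, I would run the Sobczyk argument: extend the coordinate functionals $e_{n}^{\ast }\in c_{0}^{\ast }$, which are weak*-null in $c_{0}^{\ast }$, by Hahn--Banach to $f_{n}\in B_{X^{\ast }}$ with $f_{n}|_{c_{0}}=e_{n}^{\ast }$. Every weak*-cluster point of $\{f_{n}\}$ restricts to $0$ on $c_{0}$, and weak*-sequential compactness guarantees such limits are attained along subsequences; one then corrects $f_{n}$ to $h_{n}=f_{n}-\varphi _{n}$ with $\varphi _{n}$ in the weak*-limit set, so that $h_{n}|_{c_{0}}=e_{n}^{\ast }$ while $h_{n}\to 0$ weak*, whereupon $Px=(h_{n}(x))_{n}$ is a bounded projection of $X$ onto $c_{0}$. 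The main obstacle here is that the classical proof uses weak*-\emph{metrizability} of $B_{X^{\ast }}$ (available for separable $X$) both to select the $\varphi _{n}$ as near-nearest points and to force $h_{n}\to 0$ weak*; in the non-separable setting I must replace metrizability by weak*-\emph{sequential} compactness and make the selection and the convergence work through a purely sequential argument. This weak*-sequential strengthening of Sobczyk's theorem is the crux of part (2).
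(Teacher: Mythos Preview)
Your approach is correct for both parts and lands at essentially the same argument as the paper, though with some detours worth flagging.

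For part (1), the paper simply invokes the Hagler--Johnson theorem: if $B_{X^{\ast}}(\mathrm{weak}^{\ast})$ is not sequentially compact, then either $X$ has a quotient isomorphic to $c_{0}$ or $X\supset\ell_{1}$, and in the latter case Theorem~\ref{Mujica} (exactly as you say) gives the $\ell_{2}$ quotient. Your route first extracts $\ell_{1}\subset X^{\ast}$ via Rosenthal's theorem and then splits on whether $\ell_{1}\subset X$; but note that the $\ell_{1}$-copy in $X^{\ast}$ plays no role in either branch of your dichotomy. In the branch $X\not\supset\ell_{1}$ your ``main obstacle'' --- producing a weak*-null $\ell_{1}$-sequence in $X^{\ast}$ from one with no weak*-convergent subsequence --- \emph{is} precisely the content of Hagler--Johnson, which you eventually acknowledge you may have to cite. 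So your plan is sound but circles back to the same reference; the Rosenthal step is harmless but superfluous.

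For part (2) your Sobczyk-type argument is exactly what the paper does, but your concern about replacing metrizability by sequential compactness is overstated. No ``selection'' of near-nearest cluster points $\varphi_{n}$ is needed: sequential compactness hands you \emph{one} weak*-convergent subsequence $z_{n}^{\ast}\to z^{\ast}$ of the extended coordinate functionals, and because $z_{n}^{\ast}|_{c_{0}}=e_{n}^{\ast}\to 0$ weak* in $\ell_{1}$, the single limit $z^{\ast}$ already annihilates the (sub)copy of $c_{0}$. Then $u_{n}:=z_{n}^{\ast}-z^{\ast}$ is weak*-null and still biorthogonal to the corresponding $c_{0}$-basis vectors $z_{n}$, so $Px=\sum_{n}u_{n}(x)z_{n}$ is the desired bounded projection. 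The classical metric machinery is simply unnecessary here.
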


\proof%
(Sketch) For the first case, if $B_{X^{\ast }}\left( \mathrm{weak}^{\ast
}\right) $ is not sequentially compact, according to the classic
Hagler-Johnson theorem \cite[Corollary 1]{HJ}, $X$ either has a quotient
isomorphic to $c_{0}$ or $X$ contains a copy of $\ell _{1}$. The later case
implies that the dual space $X^{\ast }$ of $X$ contains a copy of $%
L_{1}[0,1] $, so that $X^{\ast }$ contains a copy of $\ell _{2}$. Hence $X$
has a quotient isomorphic to $\ell _{2}$.

The second statement follows from \cite{Em}, where the Gelfand-Phillips
property is used.~For a direct proof~we refer the reader~to \cite[Theorem
4.1]{Fe2}. We provide a brief account of the argument. Let $\left\{
x_{n}\right\} $ be a normalized basic sequence in $X$ equivalent to the unit
vector basis $\left\{ e_{n}\right\} $ of $c_{0}$ and let $\left\{
x_{n}^{\ast }\right\} $ denote the sequence of coordinate functionals of $%
\left\{ x_{n}\right\} $ extended to $X$ via Hahn-Banach's theorem. If $K>0$
is the basis constant of $\left\{ x_{n}\right\} $ then $\left\| x_{n}^{\ast
}\right\| \leq 2K$, so that $x_{n}^{\ast }\in 2KB_{X^{\ast }}$ for every $%
n\in \mathbb{N}$. Since $B_{X^{\ast }}\left( \mathrm{weak}^{\ast }\right) $
is sequentially compact, there is a subsequence $\left\{ z_{n}^{\ast
}\right\} $ of $\left\{ x_{n}^{\ast }\right\} $ that converges to a point $%
z^{\ast }\in X^{\ast }$ under the weak*-topology. Let $\left\{ z_{n}\right\}
$ be the corresponding subsequence of $\left\{ x_{n}\right\} $, still
equivalent to the unit vector basis of $c_{0}$, and let $F$ be the closed
linear span of $\left\{ z_{n}:n\in \mathbb{N}\right\} $. For each $n\in
\mathbb{N}$ define the linear functional $u_{n}:X\rightarrow \mathbb{K}$ by $%
u_{n}\left( x\right) =(z_{n}^{\ast }-z^{\ast })x$, so that $\left|
u_{n}\left( x\right) \right| \leq 4K\left\| x\right\| $ for each $n\in
\mathbb{N}$. Since $u_{n}\left( x\right) \rightarrow 0$ for all $x\in X$,
the linear operator $P:X\rightarrow F$ given by $Px=\sum_{n=1}^{\infty
}u_{n}\left( x\right) z_{n}$ is well defined. Due to the formal series $%
\sum_{n=1}^{\infty }z_{n}$ is weakly unconditionally Cauchy, there is a
constant $C>0$ such that $\left\| Px\right\| \leq 4CK\left\| x\right\| $.
Now the fact that $z_{n}^{\ast }y\rightarrow 0$ for each $y\in F$ means that
$z^{\ast }\in F^{\bot }$, which implies that $Pz_{j}=z_{j}$ for each $j\in
\mathbb{N}$. Thus $P$ is a bounded linear projection operator from $X$ onto $%
F$.
\endproof%

It can be easily seen that if $X^{\ast }$ contains an isomorphic copy of $%
\ell _{1}$ but $X$ does not, then $X$ has a quotient isomorphic to $c_{0}$.
If $X^{\ast }$ has a copy of $\ell _{1}$, according to the first part of
Theorem \ref{Juliet}, then $X$ has a separable quotient isomorphic to $c_{0}$
or $\ell _{2}$.

On the other hand, the first statement of the previous theorem also implies
that each Banach space that contains an isomorphic copy of $\ell _{1}\left(
\mathbb{R}\right) $ has a quotient isomorphic to $c_{0}$ or $\ell _{2}$.
Particularly, each Banach space $X$ containing an isomorphic copy of $\ell
_{\infty }$ enjoys this property. However, since $\ell _{\infty }$ is an
injective Banach space and it has a separable quotient isomorphic to $\ell
_{2}$ (as follows, for instance, from Theorem \ref{Mujica}), one derives
that $X$ has a separable quotient isomorphic to $\ell _{2}$ provided $X$
contains an isomorphic copy of $\ell _{\infty }$. Useful characterizations
of Banach spaces containing a copy of $\ell _{\infty }$ can be found in the
classic paper \cite{Ro2}. It is also shown in \cite{Ta} that $\ell _{\infty
} $ is a quotient of a Banach space $X$ if and only if $B_{X^{\ast }}$
contains a weak*-homeomorphic copy of $\beta \mathbb{N}$. Hence such a space
$X$ has in particular a separable quotient isomorphic to $\ell _{2}$.

The class of Banach spaces for which $B_{X^{\ast }}\left( \mathrm{weak}%
^{\ast }\right) $ is sequentially compact is rich. This happens, for example
if $X$ is a WCG Banach space. Of course, no WCG Banach space contains a copy
of $\ell _{\infty }$. Another class with weak* sequentially compact dual
balls is that of \textit{Asplund }spaces. Note that the second statement of
Theorem \ref{Juliet} applies in particular to each Banach space whose
weak*-dual unit ball is Corson's (a fact first observed in \cite{Mo}) since,
as is well-known, each Corson compact is Fr\'{e}chet-Urysohn. So, one has
the following corollary, where a Banach space $X$ is called \textit{weakly
Lindel\"{o}f determined} (WLD for short) if there is a set $M\subseteq X$
with $\overline{\mathrm{span}\left( M\right) }=X$ enjoying the property that
for each $x^{\ast }\in X^{\ast }$ the set $\left\{ x\in M:x^{\ast }x\neq
0\right\} $ is countable.

\begin{corollary}
If $X$ is a WLD Banach space, then $X$ contains a complemented copy of $%
c_{0} $ if and only if it contains a copy of $c_{0}$.
\end{corollary}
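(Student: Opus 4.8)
The plan is to verify that a WLD Banach space $X$ satisfies the hypothesis of the second statement of Theorem \ref{Juliet}, namely that its weak*-dual unit ball $B_{X^{\ast}}(\mathrm{weak}^{\ast})$ is sequentially compact; the conclusion is then immediate, since the ``only if'' implication (a complemented copy is in particular a copy) is trivial, while the ``if'' implication is precisely what Theorem \ref{Juliet}(2) delivers. To obtain the sequential compactness I would show that $B_{X^{\ast}}(\mathrm{weak}^{\ast})$ is a Corson compact and then invoke the fact, already recalled above, that every Corson compact is Fr\'echet--Urysohn, and hence sequentially compact.

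The key step is to produce the Corson embedding out of the defining set $M$. Let $M\subseteq X$ be a set with $\overline{\mathrm{span}(M)}=X$ such that $\{x\in M:x^{\ast}x\neq 0\}$ is countable for every $x^{\ast}\in X^{\ast}$. I would consider the evaluation map $\Phi\colon X^{\ast}\to\mathbb{K}^{M}$ given by $\Phi(x^{\ast})=(x^{\ast}x)_{x\in M}$, and perform three checks. First, $\Phi$ is injective: if $\Phi(x^{\ast})=\Phi(y^{\ast})$, then $x^{\ast}$ and $y^{\ast}$ agree on $\mathrm{span}(M)$, hence on its closure $X$ by continuity, so $x^{\ast}=y^{\ast}$. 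Second, $\Phi$ is continuous from $(B_{X^{\ast}},\mathrm{weak}^{\ast})$ into $\mathbb{K}^{M}$ with the product topology, because each coordinate $x^{\ast}\mapsto x^{\ast}x$ is by definition weak*-continuous. Since $B_{X^{\ast}}$ is weak*-compact and $\mathbb{K}^{M}$ is Hausdorff, $\Phi$ restricts to a homeomorphism of $B_{X^{\ast}}(\mathrm{weak}^{\ast})$ onto its image. Third, by the countable-support property of $M$ the image $\Phi(B_{X^{\ast}})$ lies in the $\Sigma$-product $\{t\in\mathbb{K}^{M}:\mathrm{supp}(t)\text{ is countable}\}$, which (identifying $\mathbb{K}$ with $\mathbb{R}$, or with $\mathbb{R}^{2}$ in the complex case) exhibits it as a compact subset of a $\Sigma$-product of lines. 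Thus $B_{X^{\ast}}(\mathrm{weak}^{\ast})$ is Corson compact.

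With the Corson structure in hand the remainder is immediate: a Corson compact is Fr\'echet--Urysohn, and a compact Fr\'echet--Urysohn space is sequentially compact (given a sequence, a cluster point furnished by compactness is the limit of a sequence drawn from the range by the Fr\'echet--Urysohn property, from which an honest convergent subsequence is extracted). Hence $B_{X^{\ast}}(\mathrm{weak}^{\ast})$ is sequentially compact and Theorem \ref{Juliet}(2) applies. I expect the only genuinely substantive point to be the middle paragraph, i.e.\ the identification of WLD spaces with those whose weak*-dual ball is Corson compact, and within it the verification that $\Phi$ is a homeomorphism onto a subset of the $\Sigma$-product; everything else is either routine bookkeeping or a citation. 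In particular no idea beyond Theorem \ref{Juliet} is needed, the corollary being merely a matter of confirming its hypothesis in the WLD setting.
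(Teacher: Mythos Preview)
Your proposal is correct and follows essentially the same route as the paper: reduce to Theorem \ref{Juliet}(2) by showing that $B_{X^{\ast}}(\mathrm{weak}^{\ast})$ is Corson (hence Fr\'echet--Urysohn, hence sequentially compact). The only difference is that where the paper simply cites \cite[Proposition 1.2]{AM} for the implication ``$X$ WLD $\Rightarrow$ $B_{X^{\ast}}(\mathrm{weak}^{\ast})$ is Corson'', you supply the standard direct argument via the evaluation map $\Phi$ into the $\Sigma$-product over $M$; this is a welcome expansion rather than an alternative approach.
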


\proof%
If $X$ is a WLD Banach space, the dual unit ball $B_{X^{\ast }}\left(
\mathrm{weak}^{\ast }\right) $ of $X$ is Corson (see \cite[Proposition 1.2]%
{AM}), so the second statement of Theorem \ref{Juliet} applies.
\endproof%

If $K$ is an infinite Gul'ko compact space, then $C\left( K\right) $ is
weakly countable determined (see \cite{AN}), hence WLD. Since $C\left(
K\right) $ has plenty of copies of $c_{0}$, it must have many complemented
copies of $c_{0}$. It must be pointed out that if $K$ is Corson compact then
$C\left( K\right) $ need not be WLD. On the other hand, if a Banach space $%
X\supset c_{0}$ then $X^{\ast }\supset \ell _{1}$, so surely $X$ has $c_{0}$
or $\ell _{2}$ as a quotient (a general characterization of Banach spaces
containing a copy of $c_{0}$ is provided in \cite{Ro3}). This fact can be
sharpened, as the next corollary shows.

\begin{corollary}
If a Banach space $X$ contains a copy of $c_{0}$, then $X$ has either an
infinite-dimensional separable quotient isomorphic to $c_{0}$ or $\ell _{2}$%
, or a complemented copy of $c_{0}$.
\end{corollary}

\proof%
If $B_{X^{\ast }}\left( \mathrm{weak}^{\ast }\right) $ is not sequentially
compact, $X$ has a separable quotient isomorphic to $c_{0}$ or $\ell _{2}$
as a consequence of the first part of Theorem \ref{Juliet}. If $B_{X^{\ast
}}\left( \mathrm{weak}^{\ast }\right) $ is sequentially compact, by the
second part $X$ has a complemented copy of $c_{0}$.
\endproof%

\begin{corollary}
\emph{(cf. \cite{La} and \cite{Ro})} If $K$ is an infinite compact Hausdorff
space, then $C\left( K\right) $ always has a quotient isomorphic to $c_{0}$
or $\ell _{2}$. In case that $K$ is scattered, then $c_{0}$ embeds in $%
C\left( K\right) $ complementably.
\end{corollary}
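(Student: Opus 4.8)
The plan is to deduce both assertions from Theorem~\ref{Juliet} and the corollaries preceding this one, the only extra ingredient being the classical fact that every infinite-dimensional $C\left( K\right) $ contains a copy of $c_{0}$.

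First I would record that, since $K$ is infinite and compact Hausdorff, the space $C\left( K\right) $ is infinite-dimensional and satisfies $C\left( K\right) \supset c_{0}$. This is standard: picking a sequence of distinct points of $K$ and separating them by Urysohn's lemma produces a normalized sequence of continuous functions with essentially disjoint supports, whose closed linear span is isomorphic to $c_{0}$. Granting $C\left( K\right) \supset c_{0}$, the first assertion is immediate from the corollary just above: $C\left( K\right) $ has either an infinite-dimensional separable quotient isomorphic to $c_{0}$ or $\ell _{2}$, or else a complemented copy of $c_{0}$. In the latter case a complemented copy of $c_{0}$ is in particular a quotient of $C\left( K\right) $, so in every case $C\left( K\right) $ admits a quotient isomorphic to $c_{0}$ or $\ell _{2}$.

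For the second assertion I would invoke the well-known characterization that $K$ is scattered if and only if $C\left( K\right) $ is an Asplund space. Since Asplund spaces have weak* sequentially compact dual unit balls (as remarked after Theorem~\ref{Juliet}), it follows that $B_{C\left( K\right) ^{\ast }}\left( \mathrm{weak}^{\ast }\right) $ is sequentially compact when $K$ is scattered. The second part of Theorem~\ref{Juliet}, applied to $X=C\left( K\right) $, then states that $C\left( K\right) $ contains a copy of $c_{0}$ if and only if it contains a complemented copy of $c_{0}$; since we already know $C\left( K\right) \supset c_{0}$, this produces the desired complemented copy of $c_{0}$.

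The reduction itself is routine, being purely a matter of quoting the earlier results in the correct order and observing that a complemented subspace is always a quotient. The two external inputs carry the real content: the embedding $c_{0}\hookrightarrow C\left( K\right) $ for infinite-dimensional $C\left( K\right) $, and the identification of scatteredness of $K$ with the Asplund property of $C\left( K\right) $ (equivalently, with the weak* sequential compactness of $B_{C\left( K\right) ^{\ast }}$). Both are classical and may be cited, so I expect no genuine obstacle beyond locating the appropriate references.
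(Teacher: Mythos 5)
Your proposal is correct and follows essentially the same route as the paper: the first assertion is the observation that $C\left( K\right) \supset c_{0}$ combined with the discussion surrounding the preceding corollary (the paper simply declares this part ``clear''), and the second assertion is exactly the paper's argument, namely that scatteredness of $K$ makes $C\left( K\right) $ an Asplund space, whose dual unit ball is weak* sequentially compact, so the second part of Theorem~\ref{Juliet} upgrades the copy of $c_{0}$ to a complemented one. The only cosmetic difference is that you pass through the ``copy of $c_{0}$'' corollary and note that a complemented subspace is a quotient, whereas the paper's implicit route is $C\left( K\right) \supset c_{0}\Rightarrow C\left( K\right) ^{\ast }\supset \ell _{1}\Rightarrow $ quotient $c_{0}$ or $\ell _{2}$ via Theorem~\ref{Juliet}; both are equivalent reshufflings of the same ingredients.
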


\proof%
The first statement is clear. The second is due to in this case $C\left(
K\right) $ is an Asplund space (see \cite[Theorem 296]{HHZ}).
\endproof%

An extension of the previous corollary to all barrelled spaces $C_{k}(X)$
with the compact-open topology has been obtained in \cite{sa1}.

\section{Separable quotients in tensor products}

We quote three classic results about the existence of copies of $c_{0}$, $%
\ell _{\infty }$ and $\ell _{1}$ in injective and projective tensor products
which will be frequently used henceforth and provide a result concerning the
existence of a separable quotient in $X\,\widehat{\otimes }_{\pi }Y$. We
complement these classic facts with other results of our own. In the
following theorem $c_{00}$ stands for the linear subspace of $c_{0}$
consisting of all those sequence of finite range.

\begin{theorem}
\label{Freniche}\emph{(cf. \cite[Theorem 2.3]{Fre})} Let $X$ be an
infinite-dimensional normed space and let $Y$ be a Hausdorff locally convex
space. If $Y\supset c_{00}$ then $X\,\widehat{\otimes }_{\varepsilon }Y$
contains a complemented subspace isomorphic to $c_{0}$.
\end{theorem}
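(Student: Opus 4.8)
The plan is to manufacture inside $X\,\widehat{\otimes}_{\varepsilon}Y$ a sequence $z_{n}=x_{n}\otimes y_{n}$ that is equivalent to the unit vector basis of $c_{0}$, together with a bounded projection onto its closed linear span. The two factors of each $z_{n}$ play complementary roles: the vectors $\{y_{n}\}\subset Y$ supply the copy of $c_{00}$ (hence the $c_{0}$ geometry), while the vectors $\{x_{n}\}\subset X$, produced from the infinite-dimensionality of $X$, supply the functionals needed to build a projection that actually lands in $c_{0}$ rather than merely in $\ell_{\infty}$.

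First I would unpack the hypothesis $Y\supset c_{00}$. Fix a linear homeomorphism of $c_{00}$ (with its sup-norm) onto a subspace $\mathrm{span}\{y_{n}\}$ of $Y$, sending $e_{n}\mapsto y_{n}$. Continuity in both directions gives, on the one hand, a continuous seminorm $p$ on $Y$ with $\sup_{n}|a_{n}|\le p(\sum_{n}a_{n}y_{n})$ for all finite sums, and on the other hand, for every continuous seminorm $q$ a constant $C_{q}$ with $q(\sum_{n}a_{n}y_{n})\le C_{q}\sup_{n}|a_{n}|$. The coordinate functionals of the $y_{n}$ are dominated by $p$, so Hahn--Banach yields $\eta_{n}\in Y^{*}$ with $\eta_{n}(y_{m})=\delta_{nm}$ and $|\eta_{n}|\le p$; in particular $\{\eta_{n}\}$ is equicontinuous. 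Choosing unimodular signs in the upper estimate upgrades it to a uniform $\ell_{1}$ bound $\sum_{n}|y^{*}(y_{n})|\le C_{V}$ valid for every $y^{*}$ in the polar $V^{\circ}$ of a given $0$-neighborhood $V$.

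Next I would produce the $X$-factors. By the Josefson--Nissenzweig theorem \cite{nis}, $X^{*}$ carries a normalized weak*-null sequence $\{x_{n}^{*}\}$; I would choose $x_{n}\in B_{X}$ with $x_{n}^{*}x_{n}\ge 1/2$ and rescale so that $x_{n}^{*}x_{n}=1$, keeping $\|x_{n}\|\le 2$. Setting $z_{n}:=x_{n}\otimes y_{n}$ and working with the defining seminorms $\pi_{V}(u)=\sup\{|(x^{*}\otimes y^{*})u|:x^{*}\in B_{X^{*}},\,y^{*}\in V^{\circ}\}$ of the injective topology, the uniform $\ell_{1}$ bound gives $\pi_{V}(\sum_{n}a_{n}z_{n})\le 2C_{V}\sup_{n}|a_{n}|$, while testing against $x_{n}^{*}\in B_{X^{*}}$ and $\eta_{n}$ (which isolates the $n$-th term because $\eta_{n}(y_{m})=\delta_{nm}$) gives $\pi_{V_{p}}(\sum_{n}a_{n}z_{n})\ge\sup_{n}|a_{n}|$, where $V_{p}=\{p\le 1\}$. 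Thus on $\mathrm{span}\{z_{n}\}$ the single seminorm $\pi_{V_{p}}$ dominates all the others up to constants and is equivalent to the sup-norm, so the induced topology is the $c_{0}$-topology; passing to closures in the complete space $X\,\widehat{\otimes}_{\varepsilon}Y$ identifies $\overline{\mathrm{span}}\{z_{n}\}$ with $c_{0}$.

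Finally I would define the projection by $P(u)=\sum_{n}(x_{n}^{*}\otimes\eta_{n})(u)\,z_{n}$. Since $(x_{n}^{*}\otimes\eta_{n})(z_{m})=x_{n}^{*}(x_{m})\eta_{n}(y_{m})=\delta_{nm}$ — note that the off-diagonal terms are killed by $\eta_{n}(y_{m})=\delta_{nm}$, so \emph{no} biorthogonality of the $x_{n}^{*}$ is needed — the map $P$ restricts to the identity on $\overline{\mathrm{span}}\{z_{n}\}$, and continuity follows from $|(x_{n}^{*}\otimes\eta_{n})(u)|\le\pi_{V_{p}}(u)$ uniformly in $n$. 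The main obstacle, and the place where the hypothesis that $X$ is infinite-dimensional is genuinely used, is to verify that $P$ takes values in $c_{0}$ rather than merely in $\ell_{\infty}$: one must check $(x_{n}^{*}\otimes\eta_{n})(u)\to 0$ for every $u$. This holds on the dense algebraic tensor product because $\{x_{n}^{*}\}$ is weak*-null (so $x_{n}^{*}(s)\to 0$ while $\eta_{n}(t)$ stays bounded), and it propagates to all of $X\,\widehat{\otimes}_{\varepsilon}Y$ by the equicontinuity of the functionals $x_{n}^{*}\otimes\eta_{n}$. This delivers a bounded projection onto a copy of $c_{0}$, which completes the proof.
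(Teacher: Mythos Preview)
The paper does not give its own proof of this theorem; it merely quotes it as \cite[Theorem~2.3]{Fre}. Your argument is correct and follows essentially Freniche's original line: a Josefson--Nissenzweig sequence $\{x_{n}^{\ast}\}$ in $X^{\ast}$ (strictly speaking applied to the completion of $X$, since $X$ is only assumed normed, but $X^{\ast}=(\widehat{X})^{\ast}$ and the $x_{n}$ can be chosen in the dense subspace $X$) together with the equicontinuous biorthogonal family $\{\eta_{n}\}$ coming from the copy of $c_{00}$ in $Y$ are exactly the data used to produce both the $c_{0}$-basic sequence $\{x_{n}\otimes y_{n}\}$ and the projection $u\mapsto\sum_{n}(x_{n}^{\ast}\otimes\eta_{n})(u)\,x_{n}\otimes y_{n}$, with the weak*-nullity of $\{x_{n}^{\ast}\}$ being precisely what forces this projection to take values in $c_{0}$ rather than merely in $\ell_{\infty}$.
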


Particularly, if $X$ and $Y$ are infinite-dimensional Banach spaces and $%
X\supset c_{0}$ or $Y\supset c_{0}$, then $X\,\widehat{\otimes }%
_{\varepsilon }Y$ contains a complemented copy of $c_{0}$, (cf. \cite{SS2}).
On the other hand, if either $X\supset \ell _{\infty }$ or $Y\supset \ell
_{\infty }$ then $X\,\widehat{\otimes }_{\varepsilon }Y\supset \ell _{\infty
}$ and consequently $X\,\widehat{\otimes }_{\varepsilon }Y$ also has a
separable quotient isomorphic to $\ell _{2}$. If $X\,\widehat{\otimes }%
_{\varepsilon }Y\supset \ell _{\infty }$, the converse statement also holds,
as the next theorem asserts.

\begin{theorem}
\label{Drewnowski}\emph{(cf. \cite[Corollary 2]{Dr2})} Let $X$ and $Y$ be
Banach spaces. $X\,\widehat{\otimes }_{\varepsilon }Y\supset \ell _{\infty }$
if and only if $X\supset \ell _{\infty }$ or $Y\supset \ell _{\infty }$.
\end{theorem}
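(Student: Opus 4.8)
The plan is to prove the two implications separately: the forward direction is elementary, while the converse rests on Rosenthal's operator-theoretic description of spaces containing $\ell_\infty$. For the easy direction, suppose $X\supset\ell_\infty$ (the case $Y\supset\ell_\infty$ being symmetric) and fix a nonzero $y_0\in Y$. Since the injective norm satisfies $\|x\otimes y_0\|_\varepsilon=\|x\|\,\|y_0\|$, the map $x\mapsto x\otimes y_0$ is a scalar multiple of an isometry, so $X\otimes y_0$ is a closed subspace of $X\,\widehat{\otimes}_\varepsilon Y$ isomorphic to $X$; as $X\supset\ell_\infty$, so does $X\,\widehat{\otimes}_\varepsilon Y$. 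This settles the ``if'' part.

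For the converse I would argue contrapositively, using the following consequence of Rosenthal's theorem (\cite{Ro2}): a Banach space $W$ satisfies $W\supset\ell_\infty$ if and only if there exists a non-weakly-compact operator $\ell_\infty\to W$ (the embedding of $\ell_\infty$ is never weakly compact, and conversely every non-weakly-compact operator on $\ell_\infty$ fixes a copy of $\ell_\infty$). Thus, assuming $X\not\supset\ell_\infty$ and $Y\not\supset\ell_\infty$, every operator $\ell_\infty\to X$ and every operator $\ell_\infty\to Y$ is weakly compact, and it suffices to show that every operator $T\colon\ell_\infty\to X\,\widehat{\otimes}_\varepsilon Y$ is weakly compact. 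Here I would employ the two families of contractive slice maps $R_{y^*}=\mathrm{id}_X\otimes y^*\colon X\,\widehat{\otimes}_\varepsilon Y\to X$ for $y^*\in B_{Y^*}$ and $L_{x^*}=x^*\otimes\mathrm{id}_Y\colon X\,\widehat{\otimes}_\varepsilon Y\to Y$ for $x^*\in B_{X^*}$, which are bounded precisely because the $\varepsilon$-norm is injective. For each fixed $x^*,y^*$ the compositions $L_{x^*}\circ T$ and $R_{y^*}\circ T$ are weakly compact by hypothesis.

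To finish I would invoke the Pe\l czy\'nski--Rosenthal criterion that an operator on the $C(K)$-space $\ell_\infty$ is weakly compact if and only if $\|T\mathbf 1_{A_n}\|\to 0$ for every sequence $(A_n)$ of pairwise disjoint subsets of $\mathbb N$; so a non-weakly-compact $T$ produces disjoint $(A_n)$ and $\delta>0$ with $\|T\mathbf 1_{A_n}\|_\varepsilon\ge\delta$. Writing $z_n=T\mathbf 1_{A_n}$ and viewing $z_n$ as a compact operator $u_n\colon X^*\to Y$ with $\|u_n\|=\|z_n\|_\varepsilon\ge\delta$, weak compactness of the individual slices forces $\|L_{x^*}(z_n)\|_Y\to 0$ and $\|R_{y^*}(z_n)\|_X\to 0$ pointwise in $x^*$ and $y^*$. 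The heart of the matter, and the step I expect to be genuinely hard, is to convert this ``two-parameter, per-slice'' smallness into a \emph{single} disjoint sequence bounded below under one fixed slice, contradicting weak compactness of that slice; pointwise smallness alone is insufficient, as the diagonal operators $E_{nn}$ in $\ell_2\,\widehat{\otimes}_\varepsilon\ell_2=\mathcal K(\ell_2)$ show that $c_0$, but not $\ell_\infty$, can occur. This is exactly where one must exploit that $T$ fixes $\ell_\infty$ rather than merely $c_0$, so that $\sum_{n\in S}u_n$ is a genuinely \emph{compact} operator for every $S\subseteq\mathbb N$, and feed this into Rosenthal's machinery on relatively disjoint families (\cite{Ro2}) applied to each factor in turn. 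Concretely, choosing $x_n^*\in B_{X^*}$ with $\|u_n(x_n^*)\|\ge\delta/2$ and then disjointifying the resulting vectors in $Y$ (or, failing that, the companion vectors in $X$) is the mechanism I would pursue to manufacture the desired copy of $\ell_\infty$ in $X$ or in $Y$.
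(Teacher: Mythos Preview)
The paper does not prove this statement at all; it is quoted verbatim from Drewnowski \cite[Corollary~2]{Dr2} as a known fact, with no argument given. So there is no in-paper proof to compare against, and the question is simply whether your sketch is a proof.

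The forward implication is correct and complete. For the converse, your overall strategy---recast ``contains $\ell_\infty$'' as ``admits a non--weakly-compact operator from $\ell_\infty$'' via Rosenthal, then slice the tensor product by functionals on each factor---is the right framework and is indeed how Drewnowski proceeds, after first embedding $X\,\widehat{\otimes}_\varepsilon Y$ into $\mathcal{K}_{w^*}(X^*,Y)$. But you explicitly label the decisive step as ``genuinely hard'' and then do not carry it out. Knowing that $\|L_{x^*}z_n\|_Y\to 0$ for each fixed $x^*$ and $\|R_{y^*}z_n\|_X\to 0$ for each fixed $y^*$ is entirely compatible with $\|z_n\|_\varepsilon\ge\delta$ for all $n$, because the witnessing functionals may drift with $n$; you acknowledge this with the $E_{nn}$ example. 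Your proposed remedy---pick $x_n^*$ attaining $\|u_n(x_n^*)\|\ge\delta/2$ and ``disjointify the resulting vectors in $Y$ (or, failing that, the companion vectors in $X$)''---is not an argument: you have not said what structure permits disjointification, why the outcome would be an $\ell_\infty$-basis rather than a $c_0$-basis, or how the compactness of each $\sum_{n\in S}u_n$ is actually used. That passage is precisely the substance of Drewnowski's theorem: one must show that if every evaluation $\ell_\infty\to Y$, $a\mapsto\Phi(a)(x^*)$, is weakly compact, then the transposed family forces a non--weakly-compact operator $\ell_\infty\to X$, and this requires a genuine (if short) argument, not a hand-wave. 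As it stands, your proposal identifies the right ingredients but leaves the core lemma unproved; you should either supply that step or, as the paper does, cite \cite{Dr2} directly.
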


This also implies that if $X\,\widehat{\otimes }_{\varepsilon }Y\supset \ell
_{\infty }$ then $c_{0}$ embeds complementably in $X\,\widehat{\otimes }%
_{\varepsilon }Y$. Concerning projective tensor products, we have the
following well-known fact.

\begin{theorem}
\label{Bombal}\emph{(cf. \cite[Corollary 2.6]{BFV})} Let $X$ and $Y$ be
Banach spaces. If both $X\supset \ell _{1}$ and $Y\supset \ell _{1}$, then $%
X\,\widehat{\otimes }_{\pi }Y$ has a complemented subspace isomorphic to $%
\ell _{1}$.
\end{theorem}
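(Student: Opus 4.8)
The plan is to work in the dual and reduce the statement to exhibiting a copy of $c_{0}$ in $\left(X\,\widehat{\otimes }_{\pi }Y\right)^{*}$. Note first that $\ell _{1}\,\widehat{\otimes }_{\pi }\ell _{1}$ is isometrically $\ell _{1}(\mathbb{N}\times \mathbb{N})\cong \ell _{1}$, so $\ell _{1}$ is the natural shape of the desired quotient; rather than producing a projection onto such a copy directly (which meets head-on the fact that $\ell _{1}$ need not be complemented in either factor), I would invoke the classical criterion underlying the proof of Proposition~\ref{stronger}, namely that a Banach space $Z$ has a complemented copy of $\ell _{1}$ as soon as $Z^{*}$ contains an isomorphic copy of $c_{0}$ (equivalently of $\ell _{\infty }$), see \cite{Di}. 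Identifying $\left(X\,\widehat{\otimes }_{\pi }Y\right)^{*}$ with the space $\mathcal{L}(X,Y^{*})$ of bounded operators (equivalently bounded bilinear forms on $X\times Y$), the task becomes: build a copy of $c_{0}$ inside $\mathcal{L}(X,Y^{*})$.

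To this end, fix normalized $\ell _{1}$-basic sequences $\{x_{n}\}\subset X$ and $\{y_{n}\}\subset Y$ (available since $X\supset \ell _{1}$ and $Y\supset \ell _{1}$), and let $\{x_{n}^{*}\}\subset X^{*}$, $\{\eta _{n}\}\subset Y^{*}$ be associated biorthogonal functionals. I would look at the rank-one operators $T_{n}:=x_{n}^{*}\otimes \eta _{n}\in \mathcal{L}(X,Y^{*})$, that is $T_{n}(x)=x_{n}^{*}(x)\,\eta _{n}$. Biorthogonality gives $T_{n}(x_{k})(y_{l})=\delta _{nk}\delta _{nl}$, so restricting any finite combination $\sum _{n}a_{n}T_{n}$ to the two $\ell _{1}$-copies recovers the diagonal matrix $\mathrm{diag}(a)$ viewed as an operator $\ell _{1}\to \ell _{\infty }$, of norm $\sup _{n}|a_{n}|$. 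This forces $\left\Vert \sum _{n}a_{n}T_{n}\right\Vert \ge \sup _{n}|a_{n}|$ and shows the series $\sum _{n}T_{n}$ is \emph{not} unconditionally convergent. Hence $\{T_{n}\}$ will span a copy of $c_{0}$ (by the Bessaga--Pe\l czy\'nski $c_{0}$-theorem, or directly because the displayed lower estimate makes $a\mapsto \sum _{n}a_{n}T_{n}$ bounded below) once I establish that $\sum _{n}T_{n}$ is weakly unconditionally Cauchy in $\mathcal{L}(X,Y^{*})$.

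The crux — and the step I expect to be the main obstacle — is therefore the upper estimate $\sup \{\sum _{n}|x_{n}^{*}(x)|\,|\eta _{n}(y)|:\Vert x\Vert \le 1,\ \Vert y\Vert \le 1\}<\infty$. The obvious bound uses only that $\{x_{n}^{*}\}$ and $\{\eta _{n}\}$ are uniformly bounded, which places $(x_{n}^{*}(x))_{n}$ and $(\eta _{n}(y))_{n}$ merely in $\ell _{\infty }$, and the diagonal product of two $\ell _{\infty }$-sequences need not be summable; moreover the naive remedy of asking the biorthogonal systems to define $\ell _{2}$-valued operators cannot be imposed on an \emph{arbitrary} $\ell _{1}$-copy. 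The observation that unlocks it is that the formal map $[x_{n}]\cong \ell _{1}\to \ell _{2}$, $x_{n}\mapsto e_{n}$, is bounded and, by Grothendieck's theorem, every operator from $\ell _{1}$ into a Hilbert space is $1$-summing; since an absolutely summing operator defined on a subspace of a Banach space extends to the whole space with values in the bidual of the target, and $\ell _{2}$ is reflexive, this formal map extends to a bounded $u:X\to \ell _{2}$ with $u(x_{n})=e_{n}$. Running the same argument in $Y$ gives $v:Y\to \ell _{2}$ with $v(y_{n})=e_{n}$. Taking $x_{n}^{*}=u^{*}e_{n}$ and $\eta _{n}=v^{*}e_{n}$, Cauchy--Schwarz yields $\sum _{n}|x_{n}^{*}(x)|\,|\eta _{n}(y)|\le \Vert u(x)\Vert _{2}\,\Vert v(y)\Vert _{2}\le \Vert u\Vert \,\Vert v\Vert \,\Vert x\Vert \,\Vert y\Vert $, which is exactly the required weak unconditional Cauchy bound.

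With this estimate in hand the series $\sum _{n}T_{n}$ is weakly unconditionally Cauchy but not unconditionally convergent, so $\{T_{n}\}$ spans a copy of $c_{0}$ in $\left(X\,\widehat{\otimes }_{\pi }Y\right)^{*}=\mathcal{L}(X,Y^{*})$, and the criterion quoted in the first paragraph then produces a complemented copy of $\ell _{1}$ in $X\,\widehat{\otimes }_{\pi }Y$. I anticipate that the only genuinely delicate point is the extension of the summing operators $u$ and $v$; the rest is routine bookkeeping with biorthogonal systems and the standard duality for the projective tensor product.
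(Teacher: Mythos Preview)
The paper does not supply its own proof of this theorem; it is quoted verbatim from \cite[Corollary 2.6]{BFV}, so there is no argument in the paper to compare against.

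Your proof is correct and is in fact the standard route to the result. The identification $(X\widehat{\otimes}_{\pi}Y)^{\ast}=\mathcal{L}(X,Y^{\ast})$ and the criterion ``$Z^{\ast}\supset c_{0}\Rightarrow Z$ has a complemented copy of $\ell_{1}$'' (\cite[Chapter V, Theorem 10]{Di}, used in the paper in the proof of Proposition~\ref{stronger}) reduce the problem exactly as you describe. Your upper estimate shows that the partial sums $\sum_{n\in F}\pm T_{n}$ are uniformly bounded in $\mathcal{L}(X,Y^{\ast})$, which is one of the equivalent formulations of weak unconditional Cauchyness, and evaluation at $x_{k}\otimes y_{k}$ gives the lower estimate; Bessaga--Pe\l czy\'nski then yields $c_{0}\subset\mathcal{L}(X,Y^{\ast})$. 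The only nontrivial ingredient is, as you identify, the extension of the formal map $[x_{n}]\cong\ell_{1}\to\ell_{2}$ to a bounded operator $u:X\to\ell_{2}$; your appeal to Grothendieck's theorem (every operator $\ell_{1}\to\ell_{2}$ is $1$-summing, hence $2$-summing) together with the extension property of $2$-summing operators into a reflexive target is precisely what is needed. One small clarification worth making explicit in a write-up: the biorthogonal functionals you eventually use are $x_{n}^{\ast}=u^{\ast}e_{n}$ and $\eta_{n}=v^{\ast}e_{n}$, not the original Hahn--Banach extensions mentioned at the outset, so it is the choice $u(x_{n})=e_{n}$, $v(y_{n})=e_{n}$ that guarantees the biorthogonality relations $T_{n}(x_{k})(y_{l})=\delta_{nk}\delta_{nl}$ used in the lower bound.
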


Next we observe that if $X\,\widehat{\otimes }_{\varepsilon }Y$ is not a
quotient of $X\,\widehat{\otimes }_{\pi }Y$, then $X\,\widehat{\otimes }%
_{\varepsilon }Y$ has a separable quotient.

\begin{theorem}
\label{Miranda} Let $J:X\otimes _{\pi }Y\rightarrow X\otimes _{\varepsilon
}Y $ be the identity map and consider the continuous linear extension $%
\widetilde{J}:X\,\widehat{\otimes }_{\pi }Y\rightarrow X\,\widehat{\otimes }%
_{\varepsilon }Y$. If $\widetilde{J}$ is not a quotient map, then $X\,%
\widehat{\otimes }_{\varepsilon }Y$ has a separable quotient.
\end{theorem}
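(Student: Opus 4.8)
The plan is to produce a \emph{pseudobounded sequence} in the dual of $X\,\widehat{\otimes}_{\varepsilon}Y$ and then invoke the implication $(4)\Rightarrow(1)$ of Theorem \ref{sliwa}. Write $E:=X\,\widehat{\otimes}_{\pi}Y$ and $F:=X\,\widehat{\otimes}_{\varepsilon}Y$, both Banach spaces, and recall that the algebraic tensor product $X\otimes Y$ is dense in each of them. Since $\widetilde{J}$ restricts to the identity on $X\otimes Y$, its range contains $X\otimes Y$ and is therefore dense in $F$; hence the adjoint $\widetilde{J}^{\ast}\colon F^{\ast}\to E^{\ast}$ is injective. The first observation is that, by the open mapping theorem, saying that $\widetilde{J}$ is not a quotient map is the same as saying that $\widetilde{J}$ is not surjective, since a surjective continuous linear map between Banach spaces is automatically open. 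In particular $F$ has a proper dense subspace, so $F$ is infinite-dimensional.

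Next I would pass to the dual picture. A continuous linear operator between Banach spaces is surjective precisely when its adjoint is bounded below (equivalently, by the closed range theorem, $\widetilde{J}$ has closed range if and only if $\widetilde{J}^{\ast}$ does). Since $\widetilde{J}$ has dense but not full range, $\widetilde{J}^{\ast}$ cannot be bounded below. Therefore there is a sequence $\{\eta_{n}\}\subseteq F^{\ast}$ with $\|\eta_{n}\|=1$ and $\|\widetilde{J}^{\ast}\eta_{n}\|_{E^{\ast}}\to0$, and after passing to a subsequence we may assume $\|\widetilde{J}^{\ast}\eta_{n}\|_{E^{\ast}}\le 2^{-2n}$.

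Now set $x_{n}^{\ast}:=2^{n}\eta_{n}\in F^{\ast}$, so that $\|x_{n}^{\ast}\|=2^{n}$ and hence $\sup_{n}\|x_{n}^{\ast}\|=\infty$. On the other hand, for every $e\in E$,
\[
|x_{n}^{\ast}(\widetilde{J}e)|=|(\widetilde{J}^{\ast}x_{n}^{\ast})(e)|=2^{n}\,|(\widetilde{J}^{\ast}\eta_{n})(e)|\le 2^{-n}\|e\|_{E}\longrightarrow 0,
\]
so $\{x_{n}^{\ast}\}$ is pointwise null, and in particular pointwise bounded, on the dense subspace $\widetilde{J}(E)$ of $F$. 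Thus $\{x_{n}^{\ast}\}$ is a pseudobounded sequence in $F^{\ast}$, and Theorem \ref{sliwa} produces a separable quotient of $F=X\,\widehat{\otimes}_{\varepsilon}Y$, as desired.

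The routine ingredients here are the density of the algebraic tensor product in both completions and the standard surjectivity/bounded-below duality. The one step deserving care is the identification of the hypothesis ``$\widetilde{J}$ is not a quotient map'' with non-surjectivity, which rests on the open mapping theorem and on reading the comparison of the two tensor norms up to topological (rather than isometric) equivalence. The genuinely load-bearing idea, however, is the transfer through the adjoint of the property ``dense range but not onto'' into a normalized dual sequence whose $\widetilde{J}^{\ast}$-images tend to zero in norm; rescaling this sequence is precisely what manufactures the pseudobounded sequence feeding Theorem \ref{sliwa}.
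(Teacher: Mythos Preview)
Your proof is correct, and it takes a genuinely different route from the paper's. The paper argues by a dichotomy on barrelledness of the algebraic tensor product: if $X\otimes_{\varepsilon}Y$ is barrelled, then so is $\mathrm{Im}\,\widetilde{J}$, and a closed-graph theorem of Valdivia forces $\mathrm{Im}\,\widetilde{J}$ to be closed, hence $\widetilde{J}$ is onto and open; if $X\otimes_{\varepsilon}Y$ is not barrelled, one invokes the Saxon--Wilansky criterion (Theorem~\ref{Saxon}). You instead pass straight to the dual: non-surjectivity of $\widetilde{J}$ with dense range means $\widetilde{J}^{\ast}$ is injective but not bounded below, and rescaling a normalized sequence with $\|\widetilde{J}^{\ast}\eta_{n}\|\to 0$ manufactures a pseudobounded sequence, feeding Theorem~\ref{sliwa}. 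Your argument is more self-contained (only the open mapping/closed range theorems are needed, no external closed-graph result) and in fact proves the more general statement that \emph{any} Banach space admitting a continuous linear map from a Banach space with dense but proper range has a separable quotient. The paper's approach, on the other hand, isolates the barrelledness of $X\otimes_{\varepsilon}Y$ as the pivotal property, which it then reuses in the subsequent Proposition involving the bounded approximation property.
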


\proof%
Observe that $X\,\otimes _{\varepsilon }Y\subset\text{Im}\widetilde{J}%
\subset X\,\widehat{\otimes }_{\varepsilon }Y$. Two cases are in order.

Assume first that $X\,\otimes _{\varepsilon }Y$ is a barrelled space. In
this case, since $X\,\otimes _{\varepsilon }Y$ is dense in $\text{Im}%
\widetilde{J}$, we have that the range space $\text{Im}\widetilde{J}$ is a
barrelled subspace of $X\,\widehat{\otimes }_{\varepsilon }Y$. Given that
the graph of $\widetilde{J}$ is closed in $(X\,\widehat{\otimes }_{\pi
}Y)\times (X\,\widehat{\otimes }_{\varepsilon }Y)$ and $\text{Im}\widetilde{J%
}$ is barrelled, it follows from \cite[Theorem 19]{Va} that $\text{Im}%
\widetilde{J}$ is a closed subspace of $X\,\widehat{\otimes }_{\varepsilon
}Y $. Of course, this means that $\text{Im}\widetilde{J}=X\,\widehat{\otimes
}_{\varepsilon }Y$. Hence, the open map theorem shows that $\widetilde{J}$
is an open map from $X\,\widehat{\otimes }_{\pi }Y$ onto $X\,\widehat{%
\otimes }_{\varepsilon }Y$, so that $X\,\widehat{\otimes }_{\varepsilon }Y$
is a quotient of $X\,\widehat{\otimes }_{\pi }Y$.

Assume now that $X\,\otimes _{\varepsilon }Y$ is not barrelled. In this case
$X\,\otimes _{\varepsilon }Y$ is a non barrelled dense subspace of the
Banach space $X\,\widehat{\otimes }_{\varepsilon }Y$, so we may apply
Theorem \ref{Saxon} to get that $X\,\widehat{\otimes }_{\varepsilon }Y$ has
a separable quotient.
\endproof%
Recall that the dual of $X\otimes_{\pi}Y$ coincides with the space of
bounded linear operators from $X$ into $Y^{*}$, whereas the dual of $%
X\otimes_{\epsilon}T$ may be identified with the subspace of those operators
which are integral, see \cite[Section 3.5]{Ry}.

\begin{proposition}
Let $X$ and $Y$ be Banach spaces. If $X$ has the bounded approximation
property and there is a bounded linear operator $T:X\rightarrow Y^{\ast }$
which is not integral, then $X\,\widehat{\otimes }_{\varepsilon }Y$ has a
separable quotient.
\end{proposition}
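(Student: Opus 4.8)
The plan is to reduce this to Theorem~\ref{Miranda} by showing that the hypotheses force $\widetilde{J}:X\,\widehat{\otimes}_{\pi}Y\rightarrow X\,\widehat{\otimes}_{\varepsilon}Y$ to fail to be a quotient map. The key observation is the duality dictionary recalled just before the statement: the dual of $X\otimes_{\pi}Y$ is $\mathcal{L}(X,Y^{\ast})$, while the dual of $X\otimes_{\varepsilon}Y$ is the subspace of integral operators. A continuous linear map between Banach spaces is a quotient map precisely when its adjoint is an isomorphic embedding (bounded below). So I would translate the failure of $\widetilde{J}$ to be a quotient into a statement about its adjoint $\widetilde{J}^{\ast}$, which is exactly the inclusion of integral operators into all bounded operators from $X$ into $Y^{\ast}$.

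First I would identify $\widetilde{J}^{\ast}$ explicitly. Since $\widetilde{J}$ is the continuous extension of the identity $X\otimes_{\pi}Y\rightarrow X\otimes_{\varepsilon}Y$, its adjoint is the canonical inclusion $(X\,\widehat{\otimes}_{\varepsilon}Y)^{\ast}\hookrightarrow(X\,\widehat{\otimes}_{\pi}Y)^{\ast}$, i.e. the inclusion of the space of integral operators $I(X,Y^{\ast})$ (with the integral norm) into $\mathcal{L}(X,Y^{\ast})$ (with the operator norm). The hypothesis that $X$ has the bounded approximation property is what makes the duality $(X\,\widehat{\otimes}_{\varepsilon}Y)^{\ast}=I(X,Y^{\ast})$ clean, ensuring that integral operators are exactly the functionals that extend continuously to the injective completion. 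The existence of a bounded operator $T:X\rightarrow Y^{\ast}$ that is \emph{not} integral then says precisely that this inclusion is not surjective: the two norms do not define the same space of operators.

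The heart of the argument is then: if $\widetilde{J}$ \emph{were} a quotient map, its adjoint $\widetilde{J}^{\ast}$ would be bounded below, hence an isomorphism onto a \emph{closed} subspace of $\mathcal{L}(X,Y^{\ast})$. I would argue that being bounded below forces the integral norm and the operator norm to be equivalent on $I(X,Y^{\ast})$, so that $I(X,Y^{\ast})$ is closed in $\mathcal{L}(X,Y^{\ast})$ in the operator norm. Combined with the fact that finite-rank operators are integral and are operator-norm dense in the closure of $I(X,Y^{\ast})$, one would conclude that every operator in this closure—in particular $T$—is integral, contradicting the choice of $T$. Therefore $\widetilde{J}$ is not a quotient map, and Theorem~\ref{Miranda} delivers the separable quotient of $X\,\widehat{\otimes}_{\varepsilon}Y$.

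The step I expect to be the main obstacle is pinning down the precise sense in which "not a quotient map" corresponds to "the inclusion $I\hookrightarrow\mathcal{L}$ is not bounded below," and making the density argument rigorous. One must be careful that $\widetilde{J}$ need not be surjective a priori, so the relevant notion is that $\widetilde{J}$ is not an \emph{open} map onto its image as a map onto $X\,\widehat{\otimes}_{\varepsilon}Y$; the cleanest route is to use the standard fact that a bounded operator between Banach spaces has dense range with closed range (equivalently is a quotient map onto the target) iff its adjoint is bounded below, and then to leverage the bounded approximation property to guarantee $\widetilde{J}$ has dense range so that the adjoint criterion applies directly.
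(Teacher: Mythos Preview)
Your strategy via Theorem~\ref{Miranda} is reasonable, but the argument as written has a genuine gap and misplaces the role of the bounded approximation property. The identification $(X\,\widehat{\otimes}_{\varepsilon}Y)^{\ast}=I(X,Y^{\ast})$ is general and does not need any approximation hypothesis; this is not where BAP enters. More seriously, your ``density'' step does not go through: from the fact that $\widetilde{J}^{\ast}$ is bounded below you correctly get that $I(X,Y^{\ast})$ sits as a \emph{closed} subspace of $\mathcal{L}(X,Y^{\ast})$, but there is no reason whatsoever for the given non-integral operator $T$ to lie in the operator-norm closure of $I(X,Y^{\ast})$. A bounded operator need not be approximable by finite-rank (or integral) operators in operator norm, so you cannot conclude $T\in I(X,Y^{\ast})$ this way. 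The argument is salvageable: what the (bounded) approximation property on $X$ actually buys you is that $\widetilde{J}$ is \emph{injective}. Hence if $\widetilde{J}$ were a quotient map it would be a bijection between Banach spaces, so an isomorphism by the open mapping theorem, forcing $\mathcal{L}(X,Y^{\ast})=I(X,Y^{\ast})$ and contradicting the existence of $T$. That is the correct route through Theorem~\ref{Miranda}.

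For comparison, the paper does not go through Theorem~\ref{Miranda} at all. It argues directly that $X\otimes_{\varepsilon}Y$ is a non-barrelled dense subspace of $X\,\widehat{\otimes}_{\varepsilon}Y$ and then invokes Theorem~\ref{Saxon}. The key external input is Bonet's theorem \cite{Bon}: if $X$ has the bounded approximation property and $X\otimes_{\varepsilon}Y$ is barrelled, then the $\pi$- and $\varepsilon$-topologies coincide on $X\otimes Y$; this is where BAP (not merely AP) is used. Since the existence of a non-integral $T$ means $(X\otimes_{\pi}Y)^{\ast}\neq(X\otimes_{\varepsilon}Y)^{\ast}$, the two topologies cannot coincide, so $X\otimes_{\varepsilon}Y$ is not barrelled and Theorem~\ref{Saxon} finishes the proof.
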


\proof%
Since does exist a bounded not integral linear operator between $X$ and $%
Y^{*}$, the $\pi$-topology and $\epsilon$-topology does not coincide on $X
\otimes Y$, see \cite{Ry}. Assume $X\otimes _{\varepsilon }Y$ is barrelled.
Since $X$ has the bounded approximation property, \cite[Theorem]{Bon}
applies to get that $X\otimes _{\varepsilon }Y=X\otimes _{\pi }Y$, which
contradicts the assumption that $\left( X\otimes _{\varepsilon }Y\right)
^{\ast }\neq \left( X\otimes _{\pi }Y\right) ^{\ast }$. Thus $X\otimes
_{\varepsilon }Y$ must be a non barrelled dense linear subspace of $X\,%
\widehat{\otimes }_{\varepsilon }Y$, which according to Theorem \ref{Saxon}
ensures that $X\,\widehat{\otimes }_{\varepsilon }Y$ has a separable
quotient.
\endproof%

For the next theorem, recall that a Banach space $X$ is called \textit{%
weakly countably determined} (WCD for short) if $X\left( \mathrm{weak}%
\right) $ is a Lindel\"{o}f $\Sigma $-space.

\begin{theorem}
\label{Minerva}Let $X$ and $Y$ be WCD Banach spaces. If $X\,\widehat{\otimes
}_{\varepsilon }Y\supset c_{0}$, then $c_{0}$ embeds complementably in $X\,%
\widehat{\otimes }_{\varepsilon }Y$.
\end{theorem}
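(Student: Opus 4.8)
The plan is to reduce the statement to the second part of Theorem \ref{Juliet}, for which it suffices to show that the weak*-dual unit ball of $Z:=X\,\widehat{\otimes}_{\varepsilon}Y$ is sequentially compact; once this is known, the hypothesis $Z\supset c_{0}$ immediately yields a complemented copy of $c_{0}$. The weak*-sequential compactness of $B_{Z^{\ast}}$ will in turn follow if we establish that $Z$ is itself a WCD (equivalently, Va\v{s}\'ak) Banach space, since then $\left(B_{Z^{\ast}},\mathrm{weak}^{\ast}\right)$ is a Gul'ko compact, hence Corson, and therefore Fr\'echet--Urysohn (as already recalled in \secref{Juliet}); a Fr\'echet--Urysohn compact space is sequential and countably compact, hence sequentially compact, so every sequence in $B_{Z^{\ast}}$ has a weak*-convergent subsequence.

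So the heart of the argument is to prove that $X\,\widehat{\otimes}_{\varepsilon}Y$ is WCD whenever $X$ and $Y$ are. First I would realize $Z$ as a closed subspace of a space of continuous functions: the assignment $x\otimes y\mapsto\left[(\varphi,\psi)\mapsto\varphi(x)\psi(y)\right]$ extends to an isometric embedding of $X\,\widehat{\otimes}_{\varepsilon}Y$ onto the closed linear span of $X\otimes Y$ in $C(K)$, where $K:=\left(B_{X^{\ast}},\mathrm{weak}^{\ast}\right)\times\left(B_{Y^{\ast}},\mathrm{weak}^{\ast}\right)$, since the injective norm of $u\in X\otimes Y$ is exactly $\sup\{|u(\varphi,\psi)|:(\varphi,\psi)\in K\}$.

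Next I would invoke the descriptive-topological characterization of WCD spaces. Since $X$ and $Y$ are WCD, each of $\left(B_{X^{\ast}},\mathrm{weak}^{\ast}\right)$ and $\left(B_{Y^{\ast}},\mathrm{weak}^{\ast}\right)$ is a Gul'ko compact; the class of Gul'ko compacta is stable under finite products, so $K$ is Gul'ko, and consequently $C(K)$ is WCD. Finally, because the weak topology of a closed subspace is the restriction of the ambient weak topology and the Lindel\"of $\Sigma$ property passes to closed subspaces, the closed subspace $Z\subseteq C(K)$ is again WCD. This closes the chain and the theorem follows.

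The step I expect to require the most external input is the passage through the theory of Gul'ko compacta: namely, that $X$ is WCD if and only if $\left(B_{X^{\ast}},\mathrm{weak}^{\ast}\right)$ is Gul'ko, that Gul'ko compactness is preserved under finite products, and that $K$ Gul'ko implies $C(K)$ WCD. These are standard facts from the theory of Va\v{s}\'ak spaces and Corson/Gul'ko compacta, which I would simply quote. The remaining verifications (the isometric embedding into $C(K)$ and the stability of WCD under closed subspaces) are routine.
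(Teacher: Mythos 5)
Your proof is correct, and its skeleton coincides with the paper's: both arguments embed $Z:=X\,\widehat{\otimes }_{\varepsilon }Y$ isometrically into $C\left( K\right) $ with $K=B_{X^{\ast }}\left( \mathrm{weak}^{\ast }\right) \times B_{Y^{\ast }}\left( \mathrm{weak}^{\ast }\right) $, use that $K$ is Gul'ko compact (finite products of Gul'ko compacta being Gul'ko) and that consequently $C\left( K\right) $ is WCD with Gul'ko, hence sequentially compact, weak*-dual unit ball. The difference is the endgame. The paper never shows that $Z$ itself is WCD; having only the sequential compactness of $B_{C\left( K\right) ^{\ast }}\left( \mathrm{weak}^{\ast }\right) $, it must re-enter the \emph{proof} of the second part of Theorem~\ref{Juliet}: run inside $C\left( K\right) $, that construction yields a complemented copy $Q$ of $c_{0}$ \emph{contained} in the given copy $P\subseteq Z$, so the projection of $C\left( K\right) $ onto $Q$ restricts to a projection of $Z$ onto $Q$. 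You instead upgrade $Z$ itself to a WCD space, invoking that the WCD property (i.\thinspace e., weak Lindel\"{o}f $\Sigma $) passes to closed subspaces --- legitimate, since by Hahn--Banach the weak topology of $Z$ is the one induced from $\left( C\left( K\right) ,\mathrm{weak}\right) $ and the complete subspace $Z$ is weakly closed --- and then you may quote Theorem~\ref{Juliet}(2) as a black box applied directly to $Z$. Your route buys a stronger standalone fact (the injective tensor product of two WCD Banach spaces is WCD, so its dual ball is weak*-sequentially compact) and avoids any reliance on the internal structure of another proof; the paper's route avoids needing the hereditary property of WCD, at the price of the observation that the complemented copy produced in Theorem~\ref{Juliet} sits inside the original copy of $c_{0}$.
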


\proof%
Since both $X$ and $Y$ are WCD Banach spaces, their dual unit balls $%
B_{X^{\ast }}\left( \mathrm{weak}^{\ast }\right) $ and $B_{X^{\ast }}\left(
\mathrm{weak}^{\ast }\right) $ are Gul'ko compact. Given that the countable
product of Gul'ko compact spaces is Gul'ko compact, the product space $%
K:=B_{X^{\ast }}\left( \mathrm{weak}^{\ast }\right) \times B_{X^{\ast
}}\left( \mathrm{weak}^{\ast }\right) $ is Gul'ko compact. Consequently $%
C\left( K\right) $ is a WCD Banach space, which implies in turn that its
weak*-dual unit ball $B_{C\left( K\right) ^{\ast }}$ is Gul'ko compact.
Particularly $B_{C\left( K\right) ^{\ast }}\left( \mathrm{weak}^{\ast
}\right) $ is angelic and consequently sequentially compact. Let $Z$ stand
for the isometric copy of $X\,\widehat{\otimes }_{\varepsilon }Y$ in $%
C\left( K\right) $ and $P$ for the isomorphic copy of $c_{0}$ in $Z$. From
the proof of the second statement of Theorem \ref{Juliet} it follows that $%
C\left( K\right) $ has a complemented copy $Q$ of $c_{0}$ \textit{contained}
in $P$. This implies that $Z$, hence $X\,\widehat{\otimes }_{\varepsilon }Y$%
, contains a complemented copy $Q$ of $c_{0}$.%
\endproof%

\section{Separable quotients in spaces of vector-valued functions}

If $\left( \Omega ,\Sigma ,\mu \right) $ is a non trivial arbitrary measure
space, we denote by $L_{p}\left( \mu ,X\right) $, $1\leq p\leq \infty $, the
Banach space of all $X$-valued $p$-Bochner $\mu $-integrable ($\mu $%
-essentially bounded when $p=\infty $) classes of functions equipped with
its usual norm. If $K$ is an infinite compact Hausdorff space, then $C\left(
K,X\right) $ stands for the Banach space of all continuous functions $%
f:K\rightarrow X$ equipped with the supremum norm. By $B\left( \Sigma
,X\right) $ we represent the Banach space of those bounded functions $%
f:\Omega \rightarrow X$ that are the uniform limit of a sequence of $\Sigma $%
-simple and $X$-valued functions, equipped with the supremum norm. The space
of \textit{all} $X$-valued bounded functions $f:\Omega \rightarrow X$
endowed with the supremum norm is written as $\ell _{\infty }\left( \Omega
,X\right) $. Clearly $\ell _{\infty }\left( X\right) =\ell _{\infty }\left(
\mathbb{N},X\right) $. By $\ell _{\infty }\left( \Sigma \right) $ we denote
the completion of the space $\ell _{0}^{\infty }\left( \Sigma \right) $ of
scalarly-valued $\Sigma $-simple functions, endowed with the supremum norm.

On the other hand, if $\left( \Omega ,\Sigma ,\mu \right) $ is a (complete)
finite measure space we represent by $P_{1}(\mu ,X)$ the normed space
consisting of all those [classes of] strongly $\mu $-measurable $X$-valued
Pettis integrable functions $f$ defined on $\Omega $ provided with the
semivariation norm
\begin{equation*}
\left\| f\right\| _{P_{1}\left( \mu ,X\right) }=\sup \left\{ \int_{\Omega
}\left| x^{\ast }f\left( \omega \right) \right| \,d\mu \left( \omega \right)
:x^{\ast }\in X^{\ast },\,\left\| x^{\ast }\right\| \leq 1\right\} .
\end{equation*}%
As is well known, in general $P_{1}(\mu ,X)$ is not a Banach space if $X$ is
infinite-dimensional, but it is always a barrelled space (see \cite[Theorem
2]{DFP} and \cite[Remark 10.5.5]{FLS}).

Our first result collects together a number of statements concerning Banach
spaces of vector-valued functions related to the existence of separable
quotients, most of them easily derived from well known facts relative to the
presence of complemented copies of $c_{0}$ and $\ell _{p}$ for $1\leq p\leq
\infty $. We denote by $ca^{+}\left( \Sigma \right) $ the set of positive
and finite measures on $\Sigma $.

\begin{theorem}
\label{Cordelia}The following statements on spaces of vector-valued
functions hold.

\begin{enumerate}
\item $C\left( K,X\right) $ always has a complemented copy of $c_{0}$.

\item $C\left( K,X\right) $ has a quotient isomorphic to $\ell _{1}$ if and
only if $X$ has $\ell _{1}$ as a quotient.

\item $L_{p}\left( \mu ,X\right) $, with $1\leq p<\infty $, has a
complemented copy of $\ell _{p}$. In particular, the vector sequence space $%
\ell _{p}\left( X\right) $ has a complemented copy of $\ell _{p}$.

\item $L_{p}\left( \mu ,X\right) $, with $1<p<\infty $, has a quotient
isomorphic to $\ell _{1}$ if and only if $X$ has $\ell _{1}$ as a quotient.
Particularly $\ell _{p}\left( X\right) $ has a quotient isomorphic to $\ell
_{1}$ if and only if the same happens to $X$.

\item $L_{\infty }\left( \mu ,X\right) $ has a quotient isomorphic to $\ell
_{2}$. Hence, so does $\ell _{\infty }\left( X\right) $.

\item If $\mu $ is purely atomic and $1\leq p<\infty $, then $L_{p}\left(
\mu ,X\right) $ has a complemented copy of $c_{0}$ if and only if $X$ has a
complemented copy of $c_{0}$. Particularly, the space $\ell _{p}\left(
X\right) $ has a complemented copy of $c_{0}$ if and only if so does $X$.

\item If $\mu $ is not purely atomic and $1\leq p<\infty $, then $%
L_{p}\left( \mu ,X\right) $ has complemented copy of $c_{0}$ if $X\supset
c_{0}$.

\item If $\mu \in ca^{+}\left( \Sigma \right) $ is purely atomic and $%
1<p<\infty $, then $L_{p}\left( \mu ,X\right) $ has a quotient isomorphic to
$c_{0}$ if and only if $X$ contains a quotient isomorphic to $c_{0}$.

\item If $\mu \in ca^{+}\left( \Sigma \right) $ is not purely atomic and $%
1<p<\infty $, then $L_{p}\left( \mu ,X\right) $ has a quotient isomorphic to
$c_{0}$ if and only if $X$ contains a quotient isomorphic to $c_{0}$ or $%
X\supset \ell _{1}$.

\item If $\mu $ is $\sigma $-finite, then $L_{\infty }\left( \mu ,X\right) $
has a quotient isomorphic to $\ell _{1}$ if and only if $\ell _{\infty
}\left( X\right) $ has $\ell _{1}$ as a quotient.

\item $B\left( \Sigma ,X\right) $ has a complemented copy of $c_{0}$ and a
quotient isomorphic to $\ell _{2}$.

\item $\ell _{\infty }\left( \Omega ,X\right) $ has a quotient isomorphic to
$\ell _{2}$.

\item If the cardinality of $\Omega $ is less than the first real-valued
measurable cardinal, then $\ell _{\infty }\left( \Omega ,X\right) $ has a
complemented copy of $c_{0}$ if and only if $X$ enjoys the same property.
Particularly, $\ell _{\infty }\left( X\right) $ contains a complemented copy
of $c_{0}$ if and only if $X$ enjoys the same property.

\item $c_{0}\left( X\right) $ has a complemented copy of $c_{0}$.

\item $\ell _{\infty }\left( X\right) ^{\ast }$ has a quotient isomorphic to
$\ell _{1}$.
\end{enumerate}
\end{theorem}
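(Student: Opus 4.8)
The plan is to rely on the elementary duality between closed subspaces of a Banach space $Z$ and quotients of its dual $Z^{\ast}$. Specifically, if $W$ is a closed subspace of $Z$, the restriction map $r:Z^{\ast}\to W^{\ast}$ defined by $r(z^{\ast})=z^{\ast}|_{W}$ is surjective by the Hahn--Banach theorem and has kernel $W^{\bot}$, so that $Z^{\ast}/W^{\bot}$ is isometrically isomorphic to $W^{\ast}$. Hence, whenever $Z$ contains an isomorphic copy $W$ of $c_{0}$, the dual $Z^{\ast}$ has a quotient isomorphic to $W^{\ast}\cong c_{0}^{\ast}=\ell_{1}$. The crucial point is that $W$ need only be a \emph{subspace} of $Z$ and not a complemented one, which is exactly what allows the argument to succeed for $Z=\ell_{\infty}\left(X\right)$, where $c_{0}$ embeds but is not complemented.

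First I would produce the required copy of $c_{0}$ in $\ell_{\infty}\left(X\right)$. Fixing any $x_{0}\in X$ with $\left\|x_{0}\right\|=1$ (which exists since $X\neq\left\{\mathbf{0}\right\}$), the map $\left(a_{n}\right)\mapsto\left(a_{n}x_{0}\right)$ embeds $\ell_{\infty}$ isometrically into $\ell_{\infty}\left(X\right)$; since $c_{0}\subseteq\ell_{\infty}$, this yields an isometric copy $W$ of $c_{0}$ inside $\ell_{\infty}\left(X\right)$. Applying the duality principle above with $Z=\ell_{\infty}\left(X\right)$ then gives $\ell_{\infty}\left(X\right)^{\ast}/W^{\bot}\cong W^{\ast}\cong\ell_{1}$, which is the desired quotient.

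I do not expect any genuine obstacle in this argument; it is considerably softer than the preceding items of the theorem and uses none of them. The only subtlety worth flagging is conceptual rather than technical: one must resist looking for a complemented copy of $c_{0}$ in $\ell_{\infty}\left(X\right)$ (which would fail, as $c_{0}$ is uncomplemented in $\ell_{\infty}$) and instead observe that a bare subspace isomorphic to $c_{0}$ already forces an $\ell_{1}$ quotient at the level of the dual.
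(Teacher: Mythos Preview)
Your argument for item (15) is correct. The duality fact $Z^{\ast}/W^{\bot}\cong W^{\ast}$ for any closed subspace $W\subseteq Z$ is standard, and the embedding of $c_{0}$ into $\ell_{\infty}(X)$ via a fixed unit vector is immediate, so the conclusion follows cleanly.

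Your route, however, differs from the paper's. The paper invokes the structural result (cited from Cembranos--Mendoza) that $\ell_{1}(X^{\ast})$ sits as a norm-one complemented subspace of $\ell_{\infty}(X)^{\ast}$; since $\ell_{1}(X^{\ast})$ itself contains a complemented copy of $\ell_{1}$ (item (3)), one obtains a \emph{complemented} copy of $\ell_{1}$ inside $\ell_{\infty}(X)^{\ast}$, and hence the quotient. Your argument is more elementary and entirely self-contained: it needs no external reference and no prior item of the theorem, only Hahn--Banach and the trivial embedding $c_{0}\hookrightarrow\ell_{\infty}\hookrightarrow\ell_{\infty}(X)$. The trade-off is that the paper's approach yields the stronger conclusion that $\ell_{1}$ is complemented in $\ell_{\infty}(X)^{\ast}$, whereas yours gives only the quotient statement---which is, of course, exactly what item (15) asserts.
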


\proof%
Let us proceed with the proofs of the statements.

\begin{enumerate}
\item This well-know fact can be found in \cite[Theorem]{Ce} and %
\cite[Corollary 2.5]{Fre} (or in \cite[Theorem 3.2.1]{CM}).

\item This is because $C\left( K,X\right) $ contains a complemented copy of $%
\ell _{1}$ if and only if $X$ contains a complemented copy of $\ell _{1}$
(see \cite{SS} or \cite[Theorem 3.1.4]{CM}).

\item If $1\leq p<\infty $, each $L_{p}\left( \mu ,X\right) $ space contains
a norm one complemented isometric copy of $\ell _{p}$ (see \cite[Proposition
1.4.1]{CM}). For the second affirmation note that if $\left( \Omega ,\Sigma
,\mu \right) $ is a $\sigma $-finite purely atomic measure space, then $\ell
_{p}\left( X\right) =L_{p}\left( \mu ,X\right) $ isometrically.

\item If $1<p<\infty $ then $L_{p}\left( \mu ,X\right) $ contains
complemented copy of $\ell _{1}$ if and only if $X$ does (see \cite{Me2} or %
\cite[Theorem 4.1.2]{CM}).

\item The space $\ell _{\infty }$ is isometrically embedded in $L_{\infty
}\left( \mu \right) $, which is in turn isometric to a norm one complemented
subspace of $L_{\infty }\left( \mu ,X\right) $.

\item If $\mu $ is purely atomic, then $L_{p}\left( \mu ,X\right) $ contains
a complemented copy of $c_{0}$ if and only if $X$ has the same property.
This fact, discovered by F. Bombal in \cite{Bo}, can also be seen in %
\cite[Theorem 4.3.1]{CM}.

\item If $\mu $ is not purely atomic and $1\leq p<\infty $, according to %
\cite{Em1}, the only fact that $X\supset c_{0}$ implies that $L_{p}\left(
\mu ,X\right) $ contains a complemented copy of $c_{0}$.

\item If $\left( \Omega ,\Sigma ,\mu \right) $ is a purely atomic finite
measure space and $1<p<\infty $, the statement corresponds to the first
statement of \cite[Theorem 1.1]{DS}.

\item If $\left( \Omega ,\Sigma ,\mu \right) $ is a not purely atomic finite
measure space and $1<p<\infty $, the statement corresponds to the second
statement of \cite[Theorem 1.1]{DS}.

\item If $\left( \Omega ,\Sigma ,\mu \right) $ is a $\sigma $-finite measure
space, the existence of a complemented copy of $\ell _{1}$ in $L_{\infty
}\left( \mu ,X\right) $ is related to the local theory of Banach spaces, a
fact discovered by S. D\'{\i}az in \cite{Dia2}. The statement, in the way as
it has been formulated above, can be found in \cite[Theorem 5.2.3]{CM}.

\item Since $\ell _{0}^{\infty }\left( \Sigma ,X\right) =\ell _{0}^{\infty
}\left( \Sigma \right) \otimes _{\varepsilon }X$ and $X$ is
infinite-dimensional, then $\ell _{0}^{\infty }\left( \Sigma ,X\right) $ is
not barrelled by virtue of classic Freniche's theorem (see \cite[Corollar
1.5]{Fre}). Given that $\ell _{0}^{\infty }\left( \Sigma ,X\right) $ is a
non barrelled dense subspace of $B\left( \Sigma ,X\right) $, Theorem \ref%
{Saxon} guarantees that $B\left( \Sigma ,X\right) $ has in fact a separable
quotient. However, we can be more precise. Since $B\left( \Sigma ,X\right)
=\ell _{0}^{\infty }\left( \Sigma \right) \,\widehat{\otimes }_{\varepsilon
}X$ and $\ell _{0}^{\infty }\left( \Sigma \right) \supset c_{00}$ due to the
non triviality of the $\sigma $-algebra $\Sigma $, Theorem \ref{Freniche}
implies that $B\left( \Sigma ,X\right) $ contains a complemented copy of $%
c_{0}$. On the other hand, since $\ell _{\infty }$ is isometrically embedded
in $B\left( \Sigma ,X\right) $, it turns out that $\ell _{2}$ is a quotient
of $B\left( \Sigma ,X\right) $.

\item Clearly $\ell _{\infty }\left( \Omega ,X\right) \supset \ell _{\infty
}\left( \Omega \right) \supset \ell _{\infty }$ since the set $\Omega $ is
infinite.

\item This property can be found in \cite{LR}.

\item Just note that $c_{0}\left( X\right) =c_{0}\,\widehat{\otimes }%
_{\varepsilon }X$, so we may apply Theorem \ref{Freniche}.

\item It suffices to note that $\ell _{1}\left( X^{\ast }\right) $ is
linearly isometric to a complemented subspace of $\ell _{\infty }\left(
X\right) ^{\ast }$ (see \cite[Section 5.1]{CM}).
\endproof%
\end{enumerate}

\begin{remark}
$L_{p}\left( \mu ,X\right) $ if $1\leq p<\infty $, as well as $C\left(
K,X\right) $, need not contain a copy of $\ell _{\infty }$. \emph{By %
\cite[Theorem]{Me}, one has that }$L_{p}\left( \mu ,X\right) \supset \ell
_{\infty }$\emph{\ if and only if }$X\supset \ell _{\infty }$\emph{, whereas
}$C\left( K,X\right) \supset \ell _{\infty }$\emph{\ if and only if }$%
C\left( K\right) \supset \ell _{\infty }$\emph{\ or }$X\supset \ell _{\infty
}$\emph{, as shown in \cite[Corollary 3]{Dr2}.}
\end{remark}

\begin{remark}
Complemented copies of $c_{0}$ in $L_{\infty }\left( \mu ,X\right) $. \emph{%
If }$\left( \Omega ,\Sigma ,\mu \right) $\emph{\ is a }$\sigma $\emph{%
-finite measure, according to \cite[Theorem 1]{Dia} a necessary condition
for the space }$L_{\infty }\left( \mu ,X\right) $\emph{\ to contain a
complemented copy of }$c_{0}$\emph{\ is that }$X\supset c_{0}$\emph{. The
same happens with the space }$\ell _{\infty }\left( \Omega ,X\right) $\emph{%
\ (see \cite[Theorem 2.1 and Corollary 2.3]{Fe2}).}
\end{remark}

\begin{theorem}
\label{Vera}The following statements on the space $\widehat{P_{1}\left( \mu
,X\right) }$ hold.

\begin{enumerate}
\item If the finite measure space $\left( \Omega ,\Sigma ,\mu \right) $ is
not purely atomic, the Banach space $\widehat{P_{1}\left( \mu ,X\right) }$
has a separable quotient.

\item If the range of the positive finite measure $\mu $ is infinite and $%
X\supset c_{0}$ then $\widehat{P_{1}\left( \mu ,X\right) }$ has a
complemented copy of $c_{0}$.
\end{enumerate}
\end{theorem}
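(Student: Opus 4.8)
The plan is to reduce both statements to the tensor-product machinery of Section~4 by first identifying the completion with an injective tensor product. Concretely, I would prove the structural lemma that $\widehat{P_{1}\left( \mu ,X\right) }$ is isometrically isomorphic to $L_{1}\left( \mu \right) \widehat{\otimes }_{\varepsilon }X$. For a simple function $\sum_{i}g_{i}\otimes x_{i}$ the semivariation norm equals $\sup_{\left\Vert x^{\ast }\right\Vert \leq 1}\left\Vert \sum_{i}\left( x^{\ast }x_{i}\right) g_{i}\right\Vert _{L_{1}}$, which is exactly the injective tensor norm on $L_{1}\left( \mu \right) \otimes X$; hence the $\Sigma $-simple $X$-valued functions form an isometric copy of $L_{1}\left( \mu \right) \otimes _{\varepsilon }X$ whose completion is $L_{1}\left( \mu \right) \widehat{\otimes }_{\varepsilon }X$. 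It therefore suffices to show that simple functions are dense in $P_{1}\left( \mu ,X\right) $ for the semivariation norm.

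The hard part will be precisely this density, and it is where I would spend most of the effort. The key observation is that for fixed $f\in P_{1}\left( \mu ,X\right) $ the operator $T:X^{\ast }\rightarrow L_{1}\left( \mu \right) $ given by $Tx^{\ast }=x^{\ast }f$ is weak$^{\ast }$-to-weak continuous: for $h\in L_{\infty }\left( \mu \right) $ one has $\int h\left( x^{\ast }f\right) d\mu =x^{\ast }\left( \int hf\,d\mu \right) $, where $\int hf\,d\mu \in X$ is the Pettis integral of the Pettis-integrable function $hf$. By Alaoglu's theorem $T\left( B_{X^{\ast }}\right) $ is weakly compact in $L_{1}\left( \mu \right) $, so by the Dunford--Pettis theorem the family $\left\{ x^{\ast }f:\left\Vert x^{\ast }\right\Vert \leq 1\right\} $ is uniformly integrable. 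Writing $f_{n}=f\chi _{\left\{ \left\Vert f\right\Vert \leq n\right\} }$ and using $\mu \left( \left\{ \left\Vert f\right\Vert >n\right\} \right) \rightarrow 0$, uniform integrability gives $\left\Vert f-f_{n}\right\Vert _{P_{1}}=\sup_{\left\Vert x^{\ast }\right\Vert \leq 1}\int_{\left\{ \left\Vert f\right\Vert >n\right\} }\left\vert x^{\ast }f\right\vert d\mu \rightarrow 0$; each $f_{n}$ is bounded and strongly measurable, hence Bochner integrable and uniformly approximable by simple functions, which converge to $f_{n}$ in semivariation since that norm is dominated by $\mu \left( \Omega \right) \left\Vert \cdot \right\Vert _{\infty }$ on bounded functions. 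This yields $\widehat{P_{1}\left( \mu ,X\right) }=L_{1}\left( \mu \right) \widehat{\otimes }_{\varepsilon }X$.

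Granting the identification, statement (2) is immediate from Theorem \ref{Freniche}: since the range of $\mu $ is infinite, $L_{1}\left( \mu \right) $ is infinite-dimensional, and $X\supset c_{0}\supset c_{00}$, so with the two factors taken to be $L_{1}\left( \mu \right) $ and $X$ one obtains a complemented copy of $c_{0}$ in $L_{1}\left( \mu \right) \widehat{\otimes }_{\varepsilon }X=\widehat{P_{1}\left( \mu ,X\right) }$. For statement (1) I would instead invoke Freniche's non-barrelledness result (the same tool used for $B\left( \Sigma ,X\right) $ in Theorem \ref{Cordelia}): when $\mu $ is not purely atomic $L_{1}\left( \mu \right) $ is infinite-dimensional, so the algebraic tensor product $L_{1}\left( \mu \right) \otimes _{\varepsilon }X$ of two infinite-dimensional spaces is non-barrelled. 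By the identification this non-barrelled space is dense in the Banach space $\widehat{P_{1}\left( \mu ,X\right) }$, so the equivalence $\left( 1\right) \Leftrightarrow \left( 4\right) $ in Theorem \ref{Saxon} furnishes a separable quotient.

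Finally, I would record a self-contained fallback in case the density lemma is awkward to cite. For (2) the complemented $c_{0}$ can be built by hand: pick disjoint $A_{n}\in \Sigma $ with $\mu \left( A_{n}\right) >0$ (available by the standing non-triviality hypothesis) and a sequence $\left( x_{n}\right) $ in $X$ equivalent to the $c_{0}$-basis with uniformly bounded biorthogonal functionals $\left( x_{n}^{\ast }\right) $; then $a\mapsto \sum_{n}a_{n}\mu \left( A_{n}\right) ^{-1}\chi _{A_{n}}\otimes x_{n}$ embeds $c_{0}$, and $f\mapsto \left( x_{n}^{\ast }\int_{A_{n}}f\,d\mu \right) _{n}$ is the complementing projection, landing in $c_{0}$ because the indefinite Pettis integral $E\mapsto \int_{E}f$ is countably additive, so $\int_{A_{n}}f\rightarrow 0$. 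For (1) a self-contained route is the quotient map $f\mapsto x_{0}^{\ast }\circ f$ onto $L_{1}\left( \mu \right) $, split by $g\mapsto g\otimes x_{0}$, after which $L_{1}\left( \mu \right) \supset \ell _{1}$ has a separable quotient by Theorem \ref{Mujica}; I would nevertheless favour the Saxon--Wilansky argument, since it uses the hypothesis that $\mu $ is not purely atomic in the transparent form that $L_{1}\left( \mu \right) $ is infinite-dimensional.
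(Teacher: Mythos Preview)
Your proposal is correct, and for part (2) it is essentially what the paper does: both routes rest on Freniche's complemented-$c_{0}$ theorem for injective tensor products once the identification $\widehat{P_{1}(\mu ,X)}=L_{1}(\mu )\,\widehat{\otimes }_{\varepsilon }X$ is in hand (the paper simply cites this identification and \cite[Corollary 2]{Fre2} rather than reproving density).

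For part (1), however, your argument genuinely differs from the paper's. The paper exploits the \emph{projective} side as well: it uses $L_{1}(\mu ,X)=L_{1}(\mu )\,\widehat{\otimes }_{\pi }X$, observes that the canonical map $\widetilde{J}:L_{1}(\mu )\,\widehat{\otimes }_{\pi }X\rightarrow L_{1}(\mu )\,\widehat{\otimes }_{\varepsilon }X$ factors through the inclusion $L_{1}(\mu ,X)\hookrightarrow P_{1}(\mu ,X)$, and then invokes Thomas's theorem that $P_{1}(\mu ,X)$ is incomplete when $\mu $ is not purely atomic to conclude that $\widetilde{J}$ is not onto; Theorem \ref{Miranda} then supplies the separable quotient. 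Your route bypasses both Thomas's result and Theorem \ref{Miranda}: you go straight from Freniche's non-barrelledness of $L_{1}(\mu )\otimes _{\varepsilon }X$ to Theorem \ref{Saxon}. This is shorter, and---as your own fallback via the complemented copy of $L_{1}(\mu )$ also reveals---it does not actually use the hypothesis that $\mu $ is not purely atomic, only the standing non-triviality of $(\Omega ,\Sigma ,\mu )$ that makes $L_{1}(\mu )$ infinite-dimensional. So your argument in fact proves a bit more than stated. The paper's approach, on the other hand, serves to illustrate Theorem \ref{Miranda} and pinpoints where the ``not purely atomic'' assumption has genuine content (namely in Thomas's incompleteness theorem).
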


\proof%
Observe that $L_{1}\left( \mu \right) \,\widehat{\otimes }_{\pi
}X=L_{1}\left( \mu ,X\right) $ and $\widehat{P_{1}\left( \mu ,X\right) }%
=L_{1}\left( \mu \right) \,\widehat{\otimes }_{\varepsilon }X$
isometrically. On the other hand, from the algebraic viewpoint $L_{1}\left(
\mu ,X\right) $ is a linear subspace of $P_{1}\left( \mu ,X\right) $, which
is dense under the norm of $P_{1}\left( \mu ,X\right) $. If
\begin{equation*}
J:L_{1}\left( \mu \right) \otimes _{\pi }X\rightarrow L_{1}\left( \mu
\right) \otimes _{\varepsilon }X
\end{equation*}
is the identity map, $R$ a linear isometry from $L_{1}\left( \mu ,X\right) $
onto $L_{1}\left( \mu \right) \,\widehat{\otimes }_{\pi }X$ and $S$ a linear
isometry from $L_{1}\left( \mu \right) \,\widehat{\otimes }_{\varepsilon }X$
onto $\widehat{P_{1}\left( \mu ,X\right) }$, the mapping
\begin{equation*}
S\circ \widetilde{J}\circ R:L_{1}\left( \mu ,X\right) \rightarrow \widehat{%
P_{1}\left( \mu ,X\right) },
\end{equation*}
where $\widetilde{J}$ denotes the (unique) continuous linear extension of $J$
to $L_{1}\left( \mu \right) \,\widehat{\otimes }_{\pi }X$, coincides with
the natural inclusion map $T$ of $L_{1}\left( \mu ,X\right) $ into $%
P_{1}\left( \mu ,X\right) $ over the dense subspace of $L_{1}\left( \mu
,X\right) $ consisting of the $X$-valued (classes of) $\mu $-simple
functions, which implies that $S\circ \widetilde{J}\circ R=T$. Since $X$ is
infinite-dimensional and $\mu $ is not purely atomic, the space $P_{1}\left(
\mu ,X\right) $ is not complete \cite{To}. So necessarily we have that $%
\text{Im} T\neq \widehat{P_{1}\left( \mu ,X\right) }$. This implies in
particular that $\text{Im}\widetilde{J}\neq L_{1}\left( \mu \right) \,%
\widehat{\otimes }_{\pi }X$. According to Theorem \ref{Miranda}, this means
that $\widehat{P_{1}\left( \mu ,X\right) }=L_{1}\left( \mu \right) \,%
\widehat{\otimes }_{\varepsilon }X$ has a separable quotient.

The proof of the second statement can be found in \cite[Corollary 2]{Fre2}.
\endproof%

\section{Separable quotients in spaces of linear operators}

If $Y$ is also a Banach space, let us denote by $\mathcal{L}\left(
X,Y\right) $ the Banach space of all bounded linear operators $%
T:X\rightarrow Y$ equipped with the operator norm $\left\| T\right\| $. By $%
\mathcal{K}\left( X,Y\right) $ we represent the closed linear subspace of $%
\mathcal{L}\left( X,Y\right) $ consisting of all those compact operators. We
design by $\mathcal{L}_{w^{\ast }}\left( X^{\ast },Y\right) $ the closed
linear subspace of $\mathcal{L}\left( X^{\ast },Y\right) $ formed by all
weak*-weakly continuous operators and by $\mathcal{K}_{w^{\ast }}\left(
X^{\ast },Y\right) $ the closed linear subspace of $\mathcal{K}\left(
X^{\ast },Y\right) $ consisting of all weak*-weakly continuous operators.
The closed subspace of $\mathcal{L}\left( X,Y\right) $ consisting of weakly
compact linear operators is designed by $\mathcal{W}\left( X,Y\right) $. It
is worthwhile to mention that $\mathcal{L}_{w^{\ast }}\left( X^{\ast
},Y\right) =\mathcal{L}_{w^{\ast }}\left( Y^{\ast },X\right) $
isometrically, as well as $\mathcal{K}_{w^{\ast }}\left( X^{\ast },Y\right) =%
\mathcal{K}_{w^{\ast }}\left( Y^{\ast },X\right) $, by means of the linear
mapping $T\mapsto T^{\ast }$. The Banach space of nuclear operators $%
T:X\rightarrow Y$ equipped with the so-called nuclear norm $\left\|
T\right\| _{N}$ is denoted by $\mathcal{N}\left( X,Y\right) $. Let us recall
that $\left\| T\right\| \leq \left\| T\right\| _{N}$. Classic references for
this section are the monographs \cite{Ja} and \cite{Ko}.

The first statement of Theorem \ref{Viola} answers a question of Prof. T.
Dobrowolski posed during the 31st Summer Conference on Topology and its
Applications at Leicester (2016).

\begin{theorem}
\label{Viola}The following conditions on $\mathcal{L}\left( X,Y\right) $
hold.

\begin{enumerate}
\item If $Y\neq \left\{ \mathbf{0}\right\} $, then $\mathcal{L}\left(
X,Y\right) $ always has a separable quotient.

\item If $X^{\ast }\supset c_{0}$ or $Y\supset c_{0}$, then $\mathcal{L}%
\left( X,Y\right) $ has a quotient isomorphic to $\ell _{2}$.

\item If $X^{\ast }\supset \ell _{q}$ and $Y\supset \ell _{p}$, with $1\leq
p<\infty $ and $1/p+1/q=1$, then $\mathcal{L}\left( X,Y\right) $ has a
quotient isomorphic to $\ell _{2}$.
\end{enumerate}
\end{theorem}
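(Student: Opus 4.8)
Theorem \ref{Viola} has three parts, and I will sketch a proof for each.

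The plan is to exploit the structure of $\mathcal{L}(X,Y)$ as a space containing natural complemented copies of $X^{*}$ and $Y$, and then to invoke the earlier general results. For part (1), the key observation is that $X^{*}$ sits as a complemented subspace of $\mathcal{L}(X,Y)$ whenever $Y\neq\{\mathbf{0}\}$. Indeed, I would fix a norm-one vector $y_{0}\in Y$ and a norm-one functional $\psi\in Y^{*}$ with $\psi(y_{0})=1$, and define the embedding $J:X^{*}\rightarrow\mathcal{L}(X,Y)$ by $Jx^{*}=x^{*}(\cdot)\,y_{0}$ together with the projection $\pi:\mathcal{L}(X,Y)\rightarrow X^{*}$ given by $\pi T=\psi\circ T$. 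One checks immediately that $\pi\circ J$ is the identity on $X^{*}$, so $X^{*}$ is (isometrically) complemented in $\mathcal{L}(X,Y)$. Since $X$ is infinite-dimensional, so is $X^{*}$, and $X^{*}$ is a dual Banach space; by Theorem \ref{Dodos} (Argyros--Dodos--Kanellopoulos) it has a separable quotient. A separable quotient of a complemented subspace lifts to a separable quotient of the whole space, which settles (1).

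For parts (2) and (3) the idea is to produce a suitable infinite-dimensional reflexive subspace of the dual $\mathcal{L}(X,Y)^{*}$, or else to exhibit a concrete quotient isomorphic to $\ell_{2}$ via the sequence-space machinery. I would again use that both $X^{*}$ and $Y$ embed complementably in $\mathcal{L}(X,Y)$: the copy of $Y$ comes from fixing a norm-one $\varphi\in X^{*}$ and a norm-one $x_{0}\in X$ with $\varphi(x_{0})=1$, embedding $y\mapsto\varphi(\cdot)\,y$ with projection $T\mapsto Tx_{0}$. For (2), if $Y\supset c_{0}$ then $\mathcal{L}(X,Y)$ contains a copy of $c_{0}$ through the copy of $Y$; likewise if $X^{*}\supset c_{0}$ then $\mathcal{L}(X,Y)\supset X^{*}\supset c_{0}$. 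In either case Corollary \ref{sep} gives that $\mathcal{L}(X,Y)$ has a separable quotient, but to pin down the quotient as $\ell_{2}$ I would pass to duals: a copy of $c_{0}$ inside $\mathcal{L}(X,Y)$ forces $\mathcal{L}(X,Y)^{*}\supset\ell_{1}$, and then Theorem \ref{Mujica} yields a quotient isomorphic to $\ell_{2}$. For (3), when $X^{*}\supset\ell_{q}$ and $Y\supset\ell_{p}$ with $1/p+1/q=1$, the plan is to use these copies to build inside $\mathcal{L}(X,Y)$ (or its dual) an infinite-dimensional reflexive subspace and apply Lemma \ref{Ro}; the natural candidate is a copy of $\ell_{q}$ arising from the complemented $X^{*}$, which is reflexive when $1<q<\infty$, giving a quotient isomorphic to $\ell_{q}^{*}=\ell_{p}$ and hence a separable quotient, from which an $\ell_{2}$ quotient is extracted as before.

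The main obstacle I anticipate is the endpoint and duality bookkeeping in part (3). When $p=1$ (so $q=\infty$) the hypothesis $X^{*}\supset\ell_{\infty}$ no longer supplies a reflexive subspace directly, and one must instead route through the copy of $\ell_{1}$ coming from $Y\supset\ell_{1}$: this produces a copy of $\ell_{1}$ in $\mathcal{L}(X,Y)$, whence $\mathcal{L}(X,Y)^{*}\supset L_{1}[0,1]\supset\ell_{2}$, and Lemma \ref{Ro} delivers the $\ell_{2}$ quotient. The delicate point is verifying that the tensor-product pairing genuinely embeds $\ell_{q}\otimes\ell_{p}$-type structure as a reflexive subspace of the dual in the reflexive range $1<p<\infty$, and that the complementation survives; here I would lean on the identification of $\mathcal{L}(X,Y)$ with the dual of a projective tensor product (as recalled in the text, $(X\,\widehat{\otimes}_{\pi}Y)^{*}=\mathcal{L}(X,Y^{*})$) together with Theorem \ref{Bombal}, which guarantees that $X\,\widehat{\otimes}_{\pi}Y$ contains a complemented copy of $\ell_{1}$ when both factors contain $\ell_{1}$, dualizing to the desired reflexive quotient structure.
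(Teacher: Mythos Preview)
Your argument for part (1) is correct and matches the paper's exactly: embed $X^{*}$ complementably via $x^{*}\mapsto x^{*}(\cdot)\,y_{0}$, project via $T\mapsto \psi\circ T$, then invoke Theorem~\ref{Dodos}.

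The gap is in part (2), and it propagates into part (3). From $c_{0}\subset\mathcal{L}(X,Y)$ you deduce $\ell_{1}\subset\mathcal{L}(X,Y)^{*}$ and then appeal to Theorem~\ref{Mujica}. But Theorem~\ref{Mujica} requires $\ell_{1}$ inside the space itself, not inside its dual; applied to $\mathcal{L}(X,Y)^{*}$ it would only give an $\ell_{2}$ quotient of $\mathcal{L}(X,Y)^{*}$. The implication ``$c_{0}\subset Z\Rightarrow Z$ has $\ell_{2}$ as a quotient'' is false in general: take $Z=c_{0}$, whose infinite-dimensional quotients all contain $c_{0}$ and hence cannot be $\ell_{2}$. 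What the paper actually uses here is a result specific to operator spaces (\cite[Corollary 1]{Fe1}): if $\mathcal{L}(X,Y)\supset c_{0}$ then in fact $\mathcal{L}(X,Y)\supset\ell_{\infty}$. Since $\ell_{\infty}$ is injective it is complemented, and $\ell_{\infty}$ has $\ell_{2}$ as a quotient (now Theorem~\ref{Mujica} applies legitimately, as $\ell_{\infty}\supset\ell_{1}$). This upgrade from $c_{0}$ to $\ell_{\infty}$ is the missing ingredient.

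For part (3) your plan also breaks down. Having $\ell_{q}\subset X^{*}\subset\mathcal{L}(X,Y)$ does not yield $\ell_{p}$ as a quotient of $\mathcal{L}(X,Y)$: Lemma~\ref{Ro} needs the reflexive subspace in the \emph{dual} of the target space, and $\ell_{q}$ need not be complemented in $X^{*}$ anyway. Even granting an $\ell_{p}$ quotient, one cannot ``extract an $\ell_{2}$ quotient'' from $\ell_{p}$ when $p\neq 2$. The paper proceeds differently: it exhibits an explicit $c_{0}$-basic sequence $\{T_{n}\}$ in $\mathcal{K}(\ell_{p},\ell_{p})=\ell_{q}\,\widehat{\otimes}_{\varepsilon}\ell_{p}$, given by $T_{n}\xi=\xi_{n}e_{n}$, then embeds $\ell_{q}\,\widehat{\otimes}_{\varepsilon}\ell_{p}$ into $X^{*}\,\widehat{\otimes}_{\varepsilon}Y\subset\mathcal{L}(X,Y)$ to conclude $\mathcal{L}(X,Y)\supset c_{0}$, and finishes via the same \cite{Fe1} upgrade to $\ell_{\infty}$ as in part (2). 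Your endpoint case $p=1$ (route through $Y\supset\ell_{1}$ and Theorem~\ref{Mujica} directly) is fine, but the general case needs this operator-space-specific step.
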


\proof%
Let us prove each of these statements.

\begin{enumerate}
\item First observe that $X^{\ast }$ is complemented in $\mathcal{L}\left(
X,Y\right) $. Indeed, choose $y_{0}\in Y$ with $\left\| y_{0}\right\| =1$
and apply the Hahn-Banach theorem to get $y_{0}^{\ast }\in Y^{\ast }$ such
that $\left\| y_{0}^{\ast }\right\| =1$ and $y_{0}^{\ast }y_{0}=1$. The map $%
\varphi :X^{\ast }\rightarrow \mathcal{L}\left( X,Y\right) $ defined by $%
\left( \varphi x^{\ast }\right) \left( x\right) =x^{\ast }x\cdot y_{0}$ for
every $x\in X$ is a linear isometry into $\mathcal{L}\left( X,Y\right) $
(see \cite[39.1.(2')]{Ko}), and the operator $P:\mathcal{L}\left( X,Y\right)
\rightarrow \mathcal{L}\left( X,Y\right) $ given by $PT=\varphi \left(
y_{0}^{\ast }\circ T\right) $ is a norm one linear projection operator from $%
\mathcal{L}\left( X,Y\right) $ onto $\mathrm{Im\,}\varphi $. Hence $X^{\ast
} $ is linearly isometric to a norm one complemented linear subspace of $%
\mathcal{L}\left( X,Y\right) $. Since $X^{\ast }$ is a dual Banach space, it
has a separable quotient by Theorem \ref{Dodos}. Hence the operator space $%
\mathcal{L}\left( X,Y\right) $ has a separable quotient.

\item Since $X^{\ast }\otimes _{\varepsilon }Y$ is isometrically embedded in
$\mathcal{L}\left( X,Y\right) $ and both $X^{\ast }$ and $Y$ are
isometrically embedded in $X^{\ast }\otimes _{\varepsilon }Y$, if either $%
X^{\ast }\supset c_{0}$ or $Y\supset c_{0}$, then $\mathcal{L}\left(
X,Y\right) \supset c_{0}$. In this case, according to \cite[Corollary 1]{Fe1}%
, $\mathcal{L}\left( X,Y\right) $ contains an isomorphic copy of $\ell
_{\infty }$. This ensures that $\mathcal{L}\left( X,Y\right) $ has a
separable quotient isomorphic to $\ell _{2}$.

\item If $\left\{ e_{n}:n\in \mathbb{N}\right\} $ is the unit vector basis
of $\ell _{p}$, define $T_{n}:\ell _{p}\rightarrow \ell _{p}$ by $T_{n}\xi
=\xi _{n}e_{n}$ for each $n\in \mathbb{N}$. Since%
\begin{equation*}
\left\| \sum_{i=1}^{n}a_{i}T_{i}\right\| =\sup_{\left\| \xi \right\|
_{p}\leq 1}\left( \sum_{i=1}^{n}\left| a_{i}\xi _{i}\right| ^{p}\right)
^{1/p}\leq \sup_{1\leq i\leq n}\left| a_{i}\right|
\end{equation*}%
for any scalars $a_{1},\ldots ,a_{n}$, we can see that $\left\{ T_{n}:n\in
\mathbb{N}\right\} $ is a basic sequence in $\mathcal{K}\left( \ell
_{p},\ell _{p}\right) $ equivalent to the unit vector basis of $c_{0}$.
Since it holds in general that $E^{\ast }\widehat{\otimes }_{\varepsilon }F=%
\mathcal{K}\left( E,F\right) $ for Banach spaces $E$ and $F$ whenever $%
E^{\ast }$ has the approximation property, if $1/p+1/q=1$ one has that
\begin{equation*}
\ell _{q}\,\widehat{\otimes }_{\varepsilon }\ell _{p}=\ell _{p}^{\ast }\,%
\widehat{\otimes }_{\varepsilon }\ell _{p}=\mathcal{K}\left( \ell _{p},\ell
_{p}\right)
\end{equation*}%
isometrically. So we have $\ell _{q}\,\widehat{\otimes }_{\varepsilon }\ell
_{p}\supset c_{0}$. As in addition $\ell _{q}\,\widehat{\otimes }%
_{\varepsilon }\ell _{p}$ is isometrically embedded in $X^{\ast }\,\widehat{%
\otimes }_{\varepsilon }Y$, which in turn is also isometrically embedded in $%
\mathcal{L}\left( X,Y\right) $, we conclude that $\mathcal{L}\left(
X,Y\right) \supset c_{0}$. So, we use again \cite[Corollary 1]{Fe1} to
conclude that $\mathcal{L}\left( X,Y\right) \supset \ell _{\infty }$. Thus $%
\mathcal{L}\left( X,Y\right) $ has a quotient isomorphic to $\ell _{2}$.
\endproof%
\end{enumerate}

The Banach space $\mathcal{L}\left( X,Y\right) $ need not contain a copy of $%
\ell _{\infty }$ in order to have a separable quotient, as the following
example shows.

Let $1<p,q<\infty $ with conjugated indices $p^{\prime },q^{\prime }$,
i.\thinspace e., $1/p+1/p^{\prime }=1/q+1/q^{\prime }=1$.

\begin{example}
If $p>q^{\prime }$ then $\mathcal{L}\left( \ell _{p},\ell _{q^{\prime
}}\right) $ does not contain an isomorphic copy of $c_{0}$.
\end{example}

\proof%
Since it holds in general that $\mathcal{L}\left( X,Y^{\ast }\right) =(X\,%
\widehat{\otimes }_{\pi }Y)^{\ast }$ isometrically for arbitrary Banach
spaces $X$ and $Y$ (see for instance \cite[Section 2.2]{Ry}), the fact that $%
\ell _{q^{\prime }}^{\ast }=\ell _{q}$ assures that $\mathcal{L}\left( \ell
_{p},\ell _{q^{\prime }}\right) =(\ell _{p}\,\widehat{\otimes }_{\pi }\ell
_{q})^{\ast }$ isometrically. Now let us assume by contradiction that $%
\mathcal{L}\left( \ell _{p},\ell _{q^{\prime }}\right) \supset c_{0}$, which
implies that $\ell _{p}\,\widehat{\otimes }_{\pi }\ell _{q}$ contains a
complemented copy of $\ell _{1}$ (see \cite[Chapter 5, Theorem 10]{Di}).
Since $p>q^{\prime }$, according to \cite[Corollary 4.24]{Ry} or %
\cite[Chapter 8, Corollary 5]{DU}, the space $\ell _{p}\,\widehat{\otimes }%
_{\pi }\ell _{q}$ is reflexive, which contradicts the fact that it has a
quotient isomorphic to the non reflexive space $\ell _{1}$. So we must
conclude that $\mathcal{L}\left( \ell _{p},\ell _{q^{\prime }}\right)
\not\supset c_{0}$.

On the other hand, since $\mathcal{L}\left( \ell _{p},\ell _{q^{\prime
}}\right) $ is a dual Banach space, Theorem \ref{Dodos} shows that $\mathcal{%
L}\left( \ell _{p},\ell _{q^{\prime }}\right) $ has a separable quotient.
Alternatively, we can also apply the first statement of Theorem \ref{Viola}.
\endproof%

\begin{proposition}
If $X^{\ast }$ has the approximation property, the Banach space $\mathcal{N}%
\left( X,Y\right) $ of nuclear operators has a separable quotient.
\end{proposition}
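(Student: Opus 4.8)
The plan is to reduce the statement to the presence of a complemented \emph{dual} subspace and then to invoke the Argyros--Dodos--Kanellopoulos theorem. The hypothesis on $X^{\ast}$ enters only through the classical identification of the nuclear operators with a projective tensor product: by Grothendieck's theorem (cf. \cite[Section 4]{Ry}), when $X^{\ast}$ has the approximation property the canonical contraction $X^{\ast}\,\widehat{\otimes}_{\pi}Y\rightarrow \mathcal{L}\left( X,Y\right) $, whose range is $\mathcal{N}\left( X,Y\right) $, is injective, whence $\mathcal{N}\left( X,Y\right) =X^{\ast}\,\widehat{\otimes}_{\pi}Y$ isometrically. This is the analogue for nuclear operators of the identity $E^{\ast}\,\widehat{\otimes}_{\varepsilon}F=\mathcal{K}\left( E,F\right) $ used in the proof of Theorem \ref{Viola}.

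Next I would exhibit $X^{\ast}$ as a norm-one complemented subspace of $X^{\ast}\,\widehat{\otimes}_{\pi}Y$. Since $Y\neq \left\{ \mathbf{0}\right\} $, fix $y_{0}\in Y$ with $\left\Vert y_{0}\right\Vert =1$ and, by the Hahn--Banach theorem, $y_{0}^{\ast}\in Y^{\ast}$ with $\left\Vert y_{0}^{\ast}\right\Vert =1$ and $y_{0}^{\ast}y_{0}=1$. The assignment $x^{\ast}\mapsto x^{\ast}\otimes y_{0}$ is an isometric embedding $\iota :X^{\ast}\rightarrow X^{\ast}\,\widehat{\otimes}_{\pi}Y$, while the bounded bilinear map $\left( x^{\ast},y\right) \mapsto y_{0}^{\ast}\left( y\right) x^{\ast}$ induces a norm-one linear map $\rho :X^{\ast}\,\widehat{\otimes}_{\pi}Y\rightarrow X^{\ast}$ satisfying $\rho \circ \iota =\mathrm{id}_{X^{\ast}}$. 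Hence $\iota \circ \rho $ is a norm-one projection of $\mathcal{N}\left( X,Y\right) $ onto an isometric copy of $X^{\ast}$; equivalently, and even without the tensor identification, one checks directly that $T\mapsto \left( y_{0}^{\ast}\circ T\right) \otimes y_{0}$ is a norm-one projection of $\mathcal{N}\left( X,Y\right) $ onto the rank-one operators of the form $x^{\ast}\otimes y_{0}$, exactly as in the proof of Theorem \ref{Viola}(1).

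To finish, note that $X^{\ast}$ is an infinite-dimensional \emph{dual} Banach space, because $X$ is infinite-dimensional. Theorem \ref{Dodos} then furnishes a separable (infinite-dimensional) quotient $q:X^{\ast}\rightarrow S$, and composing with the projection $\rho $ gives a continuous surjection $q\circ \rho :\mathcal{N}\left( X,Y\right) \rightarrow S$; since $S$ is infinite-dimensional and separable, this is the desired separable quotient. A complemented subspace is itself a quotient and a quotient of a quotient is a quotient, so no open-mapping bookkeeping is needed.

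I expect the only genuine inputs to be the approximation-property identification $\mathcal{N}\left( X,Y\right) =X^{\ast}\,\widehat{\otimes}_{\pi}Y$ and the already granted Argyros--Dodos--Kanellopoulos theorem; the complementation step and the transfer of separable quotients along a projection are routine. The subtle point worth stressing is that the hypothesis bears on $X^{\ast}$, so the complemented copy is precisely the dual space $X^{\ast}$ to which Theorem \ref{Dodos} applies. In fact the direct projection described above shows that the approximation property is used only to identify $\mathcal{N}\left( X,Y\right) $ with the tensor product and is otherwise dispensable.
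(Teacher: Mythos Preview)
Your proof is correct and follows essentially the same route as the paper's: identify $\mathcal{N}(X,Y)=X^{\ast}\,\widehat{\otimes}_{\pi}Y$ via the approximation property, observe that $X^{\ast}$ sits complementably inside, and then apply Theorem~\ref{Dodos} together with transitivity of quotients. You have merely spelled out the complementation step in detail where the paper leaves it implicit, and your closing remark that the direct projection $T\mapsto (y_{0}^{\ast}\circ T)\otimes y_{0}$ already works on $\mathcal{N}(X,Y)$ without the tensor identification is a nice sharpening not made in the paper.
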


\proof%
Since $X^{\ast }$ enjoys the approximation property, it follows that $%
\mathcal{N}\left( X,Y\right) =X^{\ast }\widehat{\otimes }_{\pi }Y$
isometrically. Hence $X^{\ast }$ is linearly isometric to a complemented
subspace of $\mathcal{N}\left( X,Y\right) $. Since $X^{\ast }$, as a dual
Banach space, has a separable quotient, the transitivity of the quotient map
yields that $\mathcal{N}\left( X,Y\right) $ has a separable quotient.
\endproof%

\begin{theorem}
The following statements hold.

\begin{enumerate}
\item If $X\supset c_{0}$ and $Y\supset c_{0}$, then $\mathcal{L}_{w^{\ast
}}\left( X^{\ast },Y\right) $ has a quotient isomorphic to $\ell _{2}$.

\item If $X$ has a separable quotient isomorphic to $\ell _{1}$, then $%
\mathcal{L}_{w^{\ast }}\left( X^{\ast },Y\right) $ enjoys the same property.

\item If $\left( \Omega ,\Sigma ,\mu \right) $ is an arbitrary measure space
and $Y\neq \left\{ \mathbf{0}\right\} $, then $\mathcal{L}_{w^{\ast }}\left(
L_{\infty }\left( \mu \right) ,Y\right) $ has a quotient isomorphic to $\ell
_{1}$.

\item If $X^{\ast }\supset c_{0}$ or $Y\supset \ell _{\infty }$, then $%
\mathcal{K}\left( X,Y\right) $ has a quotient isomorphic to $\ell _{2}$.

\item If either $X^{\ast }\supset c_{0}$ or $Y\supset c_{0}$, then $\mathcal{%
K}\left( X,Y\right) $ contains a complemented copy of $c_{0}$.

\item If $X\supset \ell _{\infty }$ or $Y\supset \ell _{\infty }$, then $%
\mathcal{K}_{w^{\ast }}\left( X^{\ast },Y\right) $ has a quotient isomorphic
to $\ell _{2}$.

\item The space $\mathcal{W}\left( X,Y\right) $ always has a separable
quotient.

\item If $X\supset c_{0}$ and $Y\supset c_{0}$, then $\mathcal{W}\left(
X,Y\right) $ contains a complemented copy of $c_{0}$.
\end{enumerate}
\end{theorem}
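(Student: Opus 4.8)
The plan is to reduce every item to a structural feature the paper has already exploited: a complemented \emph{dual} subspace (to feed Theorem \ref{Dodos}), a complemented copy of $X$ carrying the wanted quotient, an embedded injective tensor product (to feed Theorems \ref{Freniche} and \ref{Drewnowski}), or an embedded copy of $\ell_\infty$ (which automatically yields an $\ell_2$-quotient, as observed after Theorem \ref{Juliet}). The common engine is the rank-one device of Theorem \ref{Viola}(1): fix $y_0\in Y$ and $y_0^*\in Y^*$ with $y_0^*(y_0)=1$. For (7), the operators $x^*\otimes y_0$ are rank-one, hence weakly compact, and $T\mapsto(y_0^*\circ T)\otimes y_0$ is a norm-one projection giving a $1$-complemented copy of the dual space $X^*$ inside $\mathcal W(X,Y)$; since $X^*$ has a separable quotient by Theorem \ref{Dodos} and separable quotients pass to any space in which a subspace is complemented, (7) follows. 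For (2) the same device, now via $x\mapsto(x^*\mapsto x^*(x)y_0)$, gives a $1$-complemented copy of $X$ inside $\mathcal L_{w^*}(X^*,Y)$ — one checks $x^*\mapsto y_0^*(Tx^*)$ is weak*-continuous, hence evaluation at a point of $X$, so the projection lands back in the copy of $X$ — and composing a quotient map $X\twoheadrightarrow\ell_1$ with this projection yields the $\ell_1$-quotient. Statement (3) is the instance $X=L_1(\mu)$ of (2), reading $L_\infty(\mu)$ as $L_1(\mu)^*$: the non-triviality of $\mu$ gives infinitely many disjoint sets of finite positive measure whose normalized indicators span an isometric, conditional-expectation-complemented copy of $\ell_1$, so $L_1(\mu)$ has $\ell_1$ as a quotient.

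For (1) and (6) I would pass to injective tensor products through the isometric inclusion $X\,\widehat{\otimes}_{\varepsilon}Y\subseteq\mathcal K_{w^*}(X^*,Y)\subseteq\mathcal L_{w^*}(X^*,Y)$, where the $\varepsilon$-norm agrees with the operator norm on the weak*-weakly continuous finite-rank operators. In (6), Theorem \ref{Drewnowski} gives $X\,\widehat{\otimes}_{\varepsilon}Y\supset\ell_\infty$ from $X\supset\ell_\infty$ or $Y\supset\ell_\infty$, whence $\mathcal K_{w^*}(X^*,Y)\supset\ell_\infty$ and the $\ell_2$-quotient follows. For (1) the hypothesis that \emph{both} $X\supset c_0$ and $Y\supset c_0$ is used through a diagonal construction producing non-compact operators: the embedding $i\colon c_0\hookrightarrow X$ gives a weak*-continuous quotient $i^*\colon X^*\twoheadrightarrow\ell_1$, one fixes an embedding $j\colon c_0\hookrightarrow Y$, and for $a\in\ell_\infty$ the diagonal map $\Delta_a\colon\ell_1\to c_0$ is weak*-weakly continuous with $\|\Delta_a\|=\|a\|_\infty$; then $a\mapsto j\,\Delta_a\,i^*$ embeds $\ell_\infty$ isomorphically into $\mathcal L_{w^*}(X^*,Y)$, giving the $\ell_2$-quotient. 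This also clarifies why (1) needs two copies of $c_0$ whereas (6) needs a single copy of $\ell_\infty$.

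For (4) I would return to rank-one embeddings. If $Y\supset\ell_\infty$, then $y\mapsto x_0^*\otimes y$ puts $\ell_\infty$ inside $\mathcal K(X,Y)$ directly; if $X^*\supset c_0$, a dual space containing $c_0$ contains $\ell_\infty$ (cf. \cite{Di}), and the isometric copy of $X^*$ sitting in $\mathcal K(X,Y)$ via $x^*\mapsto x^*\otimes y_0$ then contains $\ell_\infty$. In either case Theorem \ref{Mujica} together with the injectivity of $\ell_\infty$ gives the $\ell_2$-quotient.

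The delicate items are (5) and (8), where a \emph{complemented} $c_0$ is required. For (8) I would take $c_0$-bases $\{x_n\}\subset X$ and $\{y_n\}\subset Y$ with bounded biorthogonal functionals $\{x_n^*\}$, $\{y_n^*\}$ and set $T_n=x_n^*\otimes y_n$; these rank-one (hence weakly compact) operators are equivalent to the unit vector basis of $c_0$, and $P(S)=\sum_n y_n^*(Sx_n)T_n$ is the candidate projection, with $P(T_m)=T_m$ by biorthogonality. Boundedness is where weak compactness is decisive: the restriction of $S\in\mathcal W(X,Y)$ to $\overline{\mathrm{span}}\{x_n\}\cong c_0$ is weakly compact, and a weakly compact operator out of $c_0$ is compact (its adjoint maps into $\ell_1$, which has the Schur property), so $\|Sx_n\|\to0$ and $(y_n^*(Sx_n))_n\in c_0$ with norm controlled by $\|S\|$. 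For (5) the analogous rank-one construction together with Theorem \ref{Freniche} applied to $X^*\,\widehat{\otimes}_{\varepsilon}Y\subseteq\mathcal K(X,Y)$ produces a copy of $c_0$, but I expect the \textbf{main obstacle} to be its complementation inside the full space $\mathcal K(X,Y)$ rather than inside the tensor product: the naive diagonal projection needs weak*-null biorthogonal functionals, which need not exist for an arbitrary copy of $c_0$ (as already the canonical $c_0\subset\ell_\infty$ shows), so one must either invoke the approximation property to identify $\mathcal K(X,Y)$ with $X^*\,\widehat{\otimes}_{\varepsilon}Y$ or appeal to the known complementation results for $c_0$ in spaces of compact operators quoted in the paper.
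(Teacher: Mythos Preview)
Your proposal is correct and shares the paper's two-pronged strategy: complement a copy of $X$ or $X^*$ via rank-one operators to transport quotients, or exhibit a copy of $\ell_\infty$ to extract an $\ell_2$-quotient. Items (2), (3), (7) match the paper almost verbatim (for (7) the paper routes through the isometry $\mathcal W(X,Y)=\mathcal L_{w^*}(X^{**},Y)$ and then applies (2) with $X^*$ in place of $X$, whereas you project directly inside $\mathcal W(X,Y)$; both feed Theorem~\ref{Dodos}). The substantive difference is that the paper disposes of (1), (4), (5), (8) by citation to \cite{Fe3}, \cite{Ka}, \cite{Rya}, \cite{Fe4}, while you supply self-contained arguments for (1), (4), (8): your diagonal embedding $a\mapsto j\Delta_a i^*$ is essentially the construction behind \cite[Theorem~1.5]{Fe3}; your treatment of (4) bypasses Kalton by invoking Bessaga--Pe\l czy\'nski (a dual containing $c_0$ contains $\ell_\infty$) and the rank-one embedding of $X^*$; and your argument for (8), pivoting on the fact that weakly compact operators out of $c_0$ are compact so that $\|Sx_n\|\to 0$ and the diagonal projection converges, is exactly the mechanism underlying \cite{Fe4}. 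For (6) the paper simply embeds $X$ and $Y$ into $\mathcal K_{w^*}(X^*,Y)$ via rank-one operators rather than passing through $X\,\widehat\otimes_\varepsilon Y$ and Theorem~\ref{Drewnowski}; your route is slightly less direct but reuses machinery already on the table. For (5) both you and the paper ultimately defer to the literature.
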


\proof%
In many cases it suffices to show that the corresponding Banach space
contains an isomorphic copy of $\ell _{\infty }$.

\begin{enumerate}
\item By \cite[Theorem 1.5]{Fe3} if $X\supset c_{0}$ and $Y\supset c_{0}$
then $\mathcal{L}_{w^{\ast }}\left( X^{\ast },Y\right) \supset \ell _{\infty
}$.

\item Choose $y_{0}\in Y$ with $\left\| y_{0}\right\| =1$ and select $%
y_{0}^{\ast }\in Y^{\ast }$ such that $\left\| y_{0}^{\ast }\right\| =1$ and
$y_{0}^{\ast }y_{0}=1$. The map $\psi :X\rightarrow \mathcal{L}_{w^{\ast
}}\left( X^{\ast },Y\right) $ given by $\psi \left( x\right) \left( x^{\ast
}\right) =x^{\ast }x\cdot y_{0}$, for $x^{\ast }\in X^{\ast }$, is
well-defined and if $x_{d}^{\ast }\rightarrow x^{\ast }$ under the weak*-
topology of $X^{\ast }$ then $\psi \left( x\right) \left( x_{d}^{\ast
}\right) \rightarrow \psi \left( x\right) \left( x^{\ast }\right) $ weakly
in $Y$, so that $\psi $ embeds $X$ isometrically in $\mathcal{L}_{w^{\ast
}}\left( X^{\ast },Y\right) $. On the other hand, the operator $Q:\mathcal{L}%
_{w^{\ast }}\left( X^{\ast },Y\right) \rightarrow \mathcal{L}_{w^{\ast
}}\left( X^{\ast },Y\right) $ given by $QT=\psi \left( y_{0}^{\ast }\circ
T\right) $, which is also well-defined since $y_{0}^{\ast }\circ T\in X$
whenever $T$ is weak*-weakly continuous, is a bounded linear projection
operator from $\mathcal{L}_{w^{\ast }}\left( X^{\ast },Y\right) $ onto $%
\text{Im}\psi $. Since we are assuming that $\ell _{1}$ is a quotient of $X$%
, it follows that $\ell _{1}$ is also isomorphic to a quotient of $\mathcal{L%
}_{w^{\ast }}\left( X^{\ast },Y\right) $.

\item This statement is consequence of the previous one, since $\ell _{1}$
embeds complementably in $L_{1}\left( \mu \right) $.

\item According to \cite{Ka}, if $X^{\ast }\supset c_{0}$ or $Y\supset \ell
_{\infty }$, then $\mathcal{K}\left( X,Y\right) \supset \ell _{\infty }$.

\item This property has been shown in \cite[Corollary 1]{Rya}.

\item The map $\psi :X\rightarrow \mathcal{L}_{w^{\ast }}\left( X^{\ast
},Y\right) $ defined above by $\psi \left( x\right) \left( x^{\ast }\right)
=x^{\ast }x\cdot y_{0}$, for every $x^{\ast }\in X^{\ast }$, yields a
finite-rank (hence compact) operator $\psi \left( x\right) $, so that $\text{%
Im}\psi \subset \mathcal{K}_{w^{\ast }}\left( X^{\ast },Y\right) $. On the
other hand, if $x_{0}\in X$ with $\left\| x_{0}\right\| =1$ and $x_{0}^{\ast
}\in X^{\ast }$ verifies that $\left\| x_{0}^{\ast }\right\| =1$ and $%
x_{0}^{\ast }x_{0}=1$, the map $\phi :Y\rightarrow \mathcal{K}_{w^{\ast
}}\left( X^{\ast },Y\right) $ given by $\phi \left( y\right) \left( x\right)
=x_{0}^{\ast }x\cdot y$, for every $x\in X$, is a linear isometry from $Y$
into $\mathcal{K}_{w^{\ast }}\left( X^{\ast },Y\right) $. Hence $X$ and $Y$
are isometrically embedded in $\mathcal{K}_{w^{\ast }}\left( X^{\ast
},Y\right) $.

\item Just note that $\mathcal{W}\left( X,Y\right) =\mathcal{L}_{w^{\ast
}}\left( X^{\ast \ast },Y\right) $ isometrically. Since $X^{\ast }$ is
complementably embedded in $\mathcal{L}_{w^{\ast }}\left( X^{\ast \ast
},Y\right) $, the conclusion follows from Theorem \ref{Dodos}.

\item According to \cite[Theorem 2.5]{Fe4}, under those conditions the space
$\mathcal{W}\left( X,Y\right) $ contains a complemented copy of $c_{0}$.
\endproof%
\end{enumerate}

\begin{remark}
\emph{If neither }$X$\emph{\ nor }$Y$\emph{\ contains a copy of }$c_{0}$%
\emph{, then }$\mathcal{L}_{w^{\ast }}\left( X^{\ast },Y\right) $\emph{\
cannot contain a complemented copy of }$c_{0}$\emph{\ as observed in \cite%
{Em2}.}
\end{remark}

The following result sharpens the first statement of Theorem \ref{Vera}.

\begin{corollary}
If $\left( \Omega ,\Sigma ,\mu \right) $ is a finite measure space, $%
\widehat{P_{1}\left( \mu ,X\right) }$ has a quotient isomorphic to $\ell
_{1} $.
\end{corollary}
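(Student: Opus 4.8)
The plan is to reduce the statement to the elementary fact that $\ell_1$ sits complementably inside $L_1(\mu)$, transported through the tensor identification already used for Theorem \ref{Vera}. Recall from that proof that $\widehat{P_1(\mu,X)} = L_1(\mu)\,\widehat{\otimes}_\varepsilon X$ isometrically. So it will suffice to produce a complemented copy of $\ell_1$ in $L_1(\mu)\,\widehat{\otimes}_\varepsilon X$, since a complemented subspace (being the range of a bounded, hence open, projection) is always a quotient.

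First I would show that $L_1(\mu)$ is itself complemented in $L_1(\mu)\,\widehat{\otimes}_\varepsilon X$. Fix $x_0\in X$ with $\|x_0\|=1$ and, by the Hahn-Banach theorem, $x_0^*\in X^*$ with $\|x_0^*\|=1$ and $x_0^*x_0=1$. The map $f\mapsto f\otimes x_0$ embeds $L_1(\mu)$ isometrically, because $\|f\otimes x_0\|_\varepsilon=\|f\|_{L_1}\|x_0\|=\|f\|_{L_1}$; and functoriality of the injective tensor norm provides the norm-one operator $\mathrm{id}_{L_1}\otimes x_0^*:L_1(\mu)\,\widehat{\otimes}_\varepsilon X\to L_1(\mu)$, whose composition with the embedding is a norm-one projection onto $L_1(\mu)\otimes x_0$.

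Next I would exhibit $\ell_1$ as a complemented subspace of $L_1(\mu)$. By the standing non-triviality assumption on the measure space there are pairwise disjoint sets $A_n\in\Sigma$ with $0<\mu(A_n)<\infty$; the normalized indicators $g_n:=\chi_{A_n}/\mu(A_n)$ span an isometric copy of $\ell_1$ (the supports being disjoint, $\|\sum a_n g_n\|_{L_1}=\sum|a_n|$), and the conditional-expectation-type map $Pf:=\sum_n\bigl(\int_{A_n}f\,d\mu\bigr)g_n$ is a norm-one projection onto their closed linear span. Hence $\ell_1$ is norm-one complemented in $L_1(\mu)$.

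Finally, transitivity of complementation gives that $\ell_1$ is complemented in $L_1(\mu)\,\widehat{\otimes}_\varepsilon X=\widehat{P_1(\mu,X)}$, and therefore a quotient of it. I do not expect a genuine obstacle: the only points requiring care are the justification of the projection $\mathrm{id}_{L_1}\otimes x_0^*$ through functoriality of $\widehat{\otimes}_\varepsilon$ and the routine verification that the $g_n$ span $\ell_1$ isometrically. It is worth noting that, unlike the first statement of Theorem \ref{Vera}, this argument never uses that $\mu$ fails to be purely atomic, so it indeed sharpens that statement and applies to every finite (non-trivial) measure space.
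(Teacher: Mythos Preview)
Your proof is correct and, once unpacked, is the same argument the paper gives. The paper invokes the identification $\widehat{P_{1}(\mu,X)}=\mathcal{L}_{w^{\ast}}(L_{\infty}(\mu),X)$ from \cite[Chapter 8, Theorem 5]{DU} and then appeals to statement~(2) of the preceding theorem, whose proof shows precisely that $L_{1}(\mu)$ sits as a norm-one complemented subspace of $\mathcal{L}_{w^{\ast}}(L_{1}(\mu)^{\ast},X)$ via $f\mapsto(x^{\ast}\mapsto x^{\ast}f\cdot y_{0})$ with retraction $T\mapsto y_{0}^{\ast}\circ T$; this is exactly your $f\mapsto f\otimes x_{0}$ and $\mathrm{id}_{L_{1}}\otimes x_{0}^{\ast}$ read through the operator-space picture instead of the tensor picture $\widehat{P_{1}(\mu,X)}=L_{1}(\mu)\,\widehat{\otimes}_{\varepsilon}X$ that you took from Theorem~\ref{Vera}. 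Both then finish with the complemented copy of $\ell_{1}$ in $L_{1}(\mu)$.
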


\proof%
This follows from the second statement of the previous theorem together with
the fact that $\widehat{P_{1}\left( \mu ,X\right) }=\mathcal{L}_{w^{\ast
}}\left( L_{\infty }\left( \mu \right) ,X\right) $ (see \cite[Chapter 8,
Theorem 5]{DU}).

\begin{remark}
The space $P_{1}\left( \mu ,X\right) $ need no contain a copy of $\ell
_{\infty }$. \emph{It can be easily shown that }$P_{1}\left( \mu ,X\right) $%
\emph{\ embeds isometrically in the space }$\mathcal{K}_{w^{\ast }}\left(
ca\left( \Sigma \right) ^{\ast },X\right) $\emph{, where }$ca\left( \Sigma
\right) $\emph{\ denotes the Banach space of scalarly-valued countably
additive measures equipped with the variation norm. Since }$ca\left( \Sigma
\right) \not\supset \ell _{\infty }$\emph{, it follows from \cite[Theorem]%
{Dr2} that }$P_{1}\left( \mu ,X\right) \supset \ell _{\infty }$\emph{\ if
and only if }$X\supset \ell _{\infty }$\emph{.}
\end{remark}

\section{Separable quotients in spaces of vector measures}

In this section we denote by $ba\left( \Sigma ,X\right) $ the Banach space
of all $X$-valued bounded finitely additive measures $F:$ $\Sigma
\rightarrow X$ provided with the semivariation norm $\left\| F\right\| $.
The closed linear subspace of $ba\left( \Sigma ,X\right) $ consisting of
those countably additive measures is represented by $ca\left( \Sigma
,X\right) $, while $cca\left( \Sigma ,X\right) $ stands for the (closed)
linear subspace of $ca\left( \Sigma ,X\right) $ of all measures with
relatively compact range. It can be easily shown that $ca\left( \Sigma
,X\right) =\mathcal{L}_{w^{\ast }}\left( ca\left( \Sigma \right) ^{\ast
},X\right) $ isometrically. We also design by $bvca\left( \Sigma ,X\right) $
the Banach space of all $X$-valued countably additive measures $F:$ $\Sigma
\rightarrow X$ of bounded variation equipped with the variation norm $\left|
F\right| $. Finally, following \cite[page 107]{Ry}, we denote by $\mathcal{M}%
_{1}\left( \Sigma ,X\right) $ the closed linear subspace of $bvca\left(
\Sigma ,X\right) $ consisting of all those $F\in bvca\left( \Sigma ,X\right)
$ that have the so-called \textit{Radon-Nikod\'{y}m property}, i.\thinspace
e., such that for each $\lambda \in ca^{+}\left( \Sigma \right) $ with $F\ll
\lambda $ there exists $f\in L_{1}\left( \lambda ,X\right) $ with $F\left(
E\right) =\int_{E}f\,d\lambda $ for every $E\in \Sigma $. For this section,
our main references are \cite{DU} and \cite{Ry}.

\begin{theorem}
The following statements hold. In the first case $X$ need not be
infinite-dimensional.

\begin{enumerate}
\item If $X\neq \left\{ \mathbf{0}\right\} $, then $ba\left( \Sigma
,X\right) $ always has a separable quotient.

\item If $X\supset c_{0}$, then $ba\left( \Sigma ,X\right) $ has a quotient
isomorphic to $\ell _{2}$.

\item If $X\supset c_{0}$ but $X\not\supset \ell _{\infty }$, then $ba\left(
\Sigma ,X\right) $ has a complemented copy of $c_{0}$.

\item If $\Sigma $ admits no atomless probability measure, then $ca\left(
\Sigma ,X\right) $ has a quotient isomorphic to $\ell _{1}$.

\item If $X\supset c_{0}$ and $\Sigma $ admits a nonzero atomless $\lambda
\in ca^{+}\left( \Sigma \right) $, then $ca\left( \Sigma ,X\right) $ has a
quotient isomorphic to $\ell _{2}$.

\item If there exists some $F\in cca\left( \Sigma ,X\right) $ of unbounded
variation, then $cca\left( \Sigma ,X\right) $ has a separable quotient.

\item If $X\supset c_{0}$, then $cca\left( \Sigma ,X\right) $ contains a
complemented copy of $c_{0}$.

\item If $X\supset \ell _{1}$, then $\mathcal{M}_{1}\left( \Sigma ,X\right) $
has a quotient isomorphic to $\ell _{1}$.
\end{enumerate}
\end{theorem}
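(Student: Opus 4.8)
The plan is to realize each of these vector–measure spaces as either an operator space or a tensor product and then invoke the theorems already established. Two identifications drive almost everything. First, $ba(\Sigma,X)$ is isometric to $\mathcal{L}(\ell_\infty(\Sigma),X)$: a bounded finitely additive $F$ corresponds to the operator $f\mapsto\int f\,dF$, and the semivariation of $F$ equals the operator norm. Second, $ca(\Sigma)$ sits as a $1$-complemented subspace of $ca(\Sigma,X)$ via $\mu\mapsto(E\mapsto\mu(E)x_0)$ with projection $F\mapsto x_0^{\ast}\circ F$, where $x_0\in X$ and $x_0^{\ast}\in X^{\ast}$ satisfy $\|x_0\|=\|x_0^{\ast}\|=x_0^{\ast}x_0=1$. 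Alongside these I will use $cca(\Sigma,X)\cong ca(\Sigma)\,\widehat{\otimes}_{\varepsilon}X$ and $\mathcal{M}_1(\Sigma,X)\cong ca(\Sigma)\,\widehat{\otimes}_{\pi}X$ (see \cite{DU} and \cite{Ry}).

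Statements (1) and (2) are then immediate. Since $\ell_\infty(\Sigma)$ is infinite dimensional and $X\neq\{\mathbf{0}\}$, Theorem~\ref{Viola}(1) gives that $\mathcal{L}(\ell_\infty(\Sigma),X)=ba(\Sigma,X)$ has a separable quotient with no assumption on $\dim X$; and when $X\supset c_0$ the range factor contains $c_0$, so Theorem~\ref{Viola}(2) yields a quotient isomorphic to $\ell_2$. For (3) the idea is to pass from $\mathcal{L}$ to the weakly compact operators. Because $\Sigma$ is a $\sigma$-algebra, $\ell_\infty(\Sigma)=B(\Sigma)$ is a Grothendieck space, so by Rosenthal's dichotomy \cite{Ro2} an operator $\ell_\infty(\Sigma)\to X$ that is not weakly compact must fix a copy of $\ell_\infty$; as $X\not\supset\ell_\infty$ no such copy exists, whence $ba(\Sigma,X)=\mathcal{W}(\ell_\infty(\Sigma),X)$. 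Since $\ell_\infty(\Sigma)\supset c_0$ (disjoint indicators) and $X\supset c_0$, the last statement of the preceding theorem on operator spaces ($\mathcal{W}(\,\cdot\,,\cdot\,)$ contains a complemented $c_0$ when both arguments do) produces a complemented copy of $c_0$ in $ba(\Sigma,X)$. I expect this reduction to weak compactness to be the main obstacle, as it rests on the Grothendieck property of $B(\Sigma)$.

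For (4) and (5) I use that $ca(\Sigma)$ is complemented in $ca(\Sigma,X)$. If $\Sigma$ admits no atomless probability measure then every $\mu\in ca^{+}(\Sigma)$ is purely atomic, so $ca(\Sigma)\cong\ell_1(\Gamma)$ with $\Gamma$ infinite (nontriviality supplies infinitely many atoms); as $\ell_1(\Gamma)$ has $\ell_1$ as a quotient, transitivity through the complementation gives (4). For (5), a nonzero atomless $\lambda$ makes $L_1(\lambda)$ infinite dimensional and, by Lebesgue decomposition, $1$-complemented in $ca(\Sigma)$ and hence in $ca(\Sigma,X)$. Dualizing, $L_\infty(\lambda)=L_1(\lambda)^{\ast}$ embeds complementably in $ca(\Sigma,X)^{\ast}$; since $L_\infty(\lambda)$ contains an isomorphic copy of the reflexive space $\ell_2$ (through a copy of $C[0,1]$), Lemma~\ref{Ro} delivers a quotient of $ca(\Sigma,X)$ isomorphic to $\ell_2^{\ast}=\ell_2$. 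The hypothesis $X\supset c_0$ additionally secures $ca(\Sigma,X)\supset c_0$, but the atomless datum alone already drives the argument.

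Items (6) and (7) concern $cca(\Sigma,X)$. For (6) I copy the scheme of Theorem~\ref{Miranda}: the bounded-variation members of $cca(\Sigma,X)$, normed by variation, form a Banach space mapping continuously and with dense range (simple measures are variation-bounded and semivariation-dense) into $cca(\Sigma,X)$, and the existence of an $F$ of unbounded variation makes this range proper. Were the range barrelled it would, by the closed-graph argument of Theorem~\ref{Miranda}, be closed and thus equal to $cca(\Sigma,X)$, a contradiction; so it is a dense non-barrelled subspace and Theorem~\ref{Saxon} yields a separable quotient. For (7), using $cca(\Sigma,X)\cong ca(\Sigma)\,\widehat{\otimes}_{\varepsilon}X$ with $ca(\Sigma)$ infinite dimensional and $X\supset c_0\supset c_{00}$, Theorem~\ref{Freniche} at once produces a complemented copy of $c_0$. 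Finally (8) follows from the projective identification $\mathcal{M}_1(\Sigma,X)\cong ca(\Sigma)\,\widehat{\otimes}_{\pi}X$: the $AL$-space $ca(\Sigma)$ contains $\ell_1$ (disjointly supported measures on infinitely many disjoint sets) and $X\supset\ell_1$ by hypothesis, so Theorem~\ref{Bombal} gives a complemented copy of $\ell_1$, and in particular $\ell_1$ as a quotient. The recurring difficulty is justifying the four structural identifications; once these are in hand, each conclusion is a direct application of a theorem from the earlier sections.
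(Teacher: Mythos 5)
Your proposal is correct, and on items (1), (2), (7) and (8) it is essentially the paper's own proof: the same identifications $ba\left( \Sigma ,X\right) =\mathcal{L}\left( \ell _{\infty }\left( \Sigma \right) ,X\right) $, $cca\left( \Sigma ,X\right) =ca\left( \Sigma \right) \,\widehat{\otimes }_{\varepsilon }X$ and $\mathcal{M}_{1}\left( \Sigma ,X\right) =ca\left( \Sigma \right) \,\widehat{\otimes }_{\pi }X$, followed by Theorems \ref{Viola}, \ref{Freniche} and \ref{Bombal}. Where you diverge, your routes are sound and sometimes better. For (3) the paper merely cites \cite[Corollary 3.2]{Fe4}, whereas you reconstruct an argument: every operator from $B\left( \Sigma \right) $ into $X\not\supset \ell _{\infty }$ is weakly compact, so $ba\left( \Sigma ,X\right) =\mathcal{W}\left( \ell _{\infty }\left( \Sigma \right) ,X\right) $ and item (8) of the operator-space theorem applies; this works, but be aware that the fact you need from \cite{Ro2} is Rosenthal's theorem that a non-weakly-compact operator on $B\left( \Sigma \right) $, $\Sigma $ a $\sigma $-algebra, fixes a copy of $\ell _{\infty }$ --- the Grothendieck property of $B\left( \Sigma \right) $, which you interpose as the reason, neither yields this fixing property in general nor is needed. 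For (4) the paper identifies the whole space $ca\left( \Sigma ,X\right) $ with $\ell _{1}\left( \Gamma ,X\right) $ and quotes Theorem \ref{Cordelia}(3); you instead project onto the scalar copy $ca\left( \Sigma \right) \cong \ell _{1}\left( \Gamma \right) $ via $F\mapsto x_{0}^{\ast }\circ F$, which is lighter and equally correct. For (5) your argument is genuinely different and in fact stronger: the Lebesgue decomposition makes $L_{1}\left( \lambda \right) $ norm-one complemented in $ca\left( \Sigma \right) $, hence in $ca\left( \Sigma ,X\right) $, so $L_{\infty }\left( \lambda \right) \supset \ell _{2}$ embeds complementably in the dual and Lemma \ref{Ro} gives the $\ell _{2}$ quotient \emph{without} using $X\supset c_{0}$ at all; the paper needs both hypotheses because it goes through Drewnowski's theorem \cite{Dr} to place a copy of $\ell _{\infty }$ inside $ca\left( \Sigma ,X\right) $. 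For (6) the paper applies Theorem \ref{Miranda} verbatim to $\widetilde{J}:ca\left( \Sigma \right) \,\widehat{\otimes }_{\pi }X\rightarrow ca\left( \Sigma \right) \,\widehat{\otimes }_{\varepsilon }X$, whose image is $\mathcal{M}_{1}\left( \Sigma ,X\right) $ by \cite[Theorem 5.22]{Ry}, while you re-run the Miranda scheme with the variation-normed space of compact-range measures of bounded variation in place of the projective tensor product; this spares you the isometry $\mathcal{M}_{1}\left( \Sigma ,X\right) =ca\left( \Sigma \right) \,\widehat{\otimes }_{\pi }X$ at the cost of checking that your space is complete (it is, being a variation-closed subspace of $bvca\left( \Sigma ,X\right) $), and both versions ultimately rest on the same Valdivia closed-graph theorem \cite{Va} together with Theorem \ref{Saxon}. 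In short: the paper's proof is shorter by citation, yours is more self-contained and improves the hypotheses of (5).
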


\proof%
In cases 2 and 3 it suffices to show that the corresponding Banach space
contains an isomorphic copy of $\ell _{\infty }$.

\begin{enumerate}
\item This happens because $ba\left( \Sigma ,X\right) =\mathcal{L}\left(
\ell _{\infty }\left( \Sigma \right) ,X\right) $ isometrically. Since $\ell
_{\infty }\left( \Sigma \right) $ is infinite-dimensional by virtue of the
non triviality of the $\sigma $-algebra $\Sigma $, the statement follows
from the first statement of Theorem \ref{Viola}.

\item By point 2 of Theorem \ref{Viola}, if $X\supset c_{0}$ then $\mathcal{L%
}\left( \ell _{\infty }\left( \Sigma \right) ,X\right) $ has a quotient
isomorphic to $\ell _{2}$. The statement follows from the fact that $%
ba\left( \Sigma ,X\right) =\mathcal{L}\left( \ell _{\infty }\left( \Sigma
\right) ,X\right) $.

\item $ba\left( \Sigma ,X\right) $ has a complemented copy of $c_{0}$ by
virtue of \cite[Corollary 3.2]{Fe4}.

\item If the non trivial $\sigma $-algebra $\Sigma $ admits no atomless
probability measure, it can be shown that $ca\left( \Sigma ,X\right) $ is
linearly isometric to $\ell _{1}\left( \Gamma ,X\right) $ for some infinite
set $\Gamma $. Since $\ell _{1}\left( \Gamma ,X\right) =L_{1}\left( \mu
,X\right) $, where $\mu $ is the counting measure on $2^{\Gamma }$, the
conclusion follows from the third statement of Theorem \ref{Cordelia}.

\item Since $ca\left( \Sigma \right) \,\widehat{\otimes }_{\varepsilon
}X=cca\left( \Sigma ,X\right) $ isometrically, if $X\supset c_{0}$ then $%
cca\left( \Sigma ,X\right) \supset c_{0}$ and hence $ca\left( \Sigma
,X\right) \supset c_{0}$. If $\Sigma $ admits a nonzero atomless $\lambda
\in ca^{+}\left( \Sigma \right) $, then one has $ca\left( \Sigma ,X\right)
\supset \ell _{\infty }$ by virtue of \cite[Theorem 1]{Dr}.

\item Observe that $cca\left( \Sigma ,X\right) =ca\left( \Sigma \right) \,%
\widehat{\otimes }_{\varepsilon }X$ and $\mathcal{M}_{1}\left( \Sigma
,X\right) =ca\left( \Sigma \right) \,\widehat{\otimes }_{\pi }X$
isometrically (see \cite[Theorem 5.22]{Ry}) but, at the same time, from the
algebraic point of view, $\mathcal{M}_{1}\left( \Sigma ,X\right) $ is a
linear subspace of $cca\left( \Sigma ,X\right) $ since every Bochner
indefinite integral has a relatively compact range, \cite[Chapter II,
Corollary 9 (c)]{DU}. If there exists some $F\in cca\left( \Sigma ,X\right) $
of unbounded variation, then $\mathcal{M}_{1}\left( \Sigma ,X\right) \neq
cca\left( \Sigma ,X\right) $, so the statement follows from Theorem \ref%
{Miranda}.

\item Since $X\supset c_{0}$ and $ca\left( \Sigma \right) $ is
infinite-dimensional, then $X$\thinspace $\widehat{\otimes }_{\varepsilon
}ca\left( \Sigma \right) $ contains a complemented copy of $c_{0}$ by %
\cite[Theorem 2.3]{Fre}.

\item Since $\mathcal{M}_{1}\left( \Sigma ,X\right) =ca\left( \Sigma \right)
\,\widehat{\otimes }_{\pi }X$ and $ca\left( \Sigma \right) \supset \ell _{1}$%
, if $X\supset \ell _{1}$ then $\mathcal{M}_{1}\left( \Sigma ,X\right) $ has
a quotient isomorphic to $\ell _{1}$ as follows from Theorem \ref{Bombal}.
\endproof%
\end{enumerate}

\begin{remark}
\emph{If }$\omega \in \Omega $\emph{\ and }$E\left( \Sigma ,X\right) $\emph{%
\ is either }$ba\left( \Sigma ,X\right) $\emph{, }$ca\left( \Sigma ,X\right)
$\emph{\ or }$bvca\left( \Sigma ,X\right) $\emph{, the map }$P_{\omega
}:E\left( \Sigma ,X\right) \rightarrow E\left( \Sigma ,X\right) $\emph{\
defined by }$P_{\omega }\left( F\right) =F\left( \Omega \right) \delta
_{\omega }$\emph{\ is a bounded linear projection operator onto the copy }$%
\left\{ x\,\delta _{\omega }:x\in X\right\} $\emph{\ of }$X$\emph{\ in }$%
E\left( \Sigma ,X\right) $\emph{. Hence, if }$X$\emph{\ has a separable
quotient isomorphic to }$Z$\emph{, then }$E\left( \Sigma ,X\right) $\emph{\
also has a separable quotient isomorphic to }$Z$\emph{.}
\end{remark}

\begin{remark}
$cca\left( \Sigma ,X\right) $ may not have a copy of $\ell _{\infty }$.
\emph{Since }$cca\left( \Sigma ,X\right) =\mathcal{K}_{w^{\ast }}\left(
ca\left( \Sigma \right) ^{\ast },X\right) $\emph{, according to %
\cite[Theorem or Corollary 4]{Dr2}, }$cca\left( \Sigma ,X\right) \supset
\ell _{\infty }$\emph{\ if and only if }$X\supset \ell _{\infty }$\emph{.}
\end{remark}

\begin{remark}
\emph{Concerning the space }$\mathcal{M}_{1}\left( \Sigma ,X\right) $\emph{,
it is worthwhile to mention that it follows from \cite[Theorem]{Fe} that }$%
\mathcal{M}_{1}\left( \Sigma ,X\right) \supset \ell _{\infty }$ \emph{if and
only if }$X\supset \ell _{\infty }$\emph{.}
\end{remark}

\end{document}